\newtheorem{prop}{Proposition}
\newtheorem{thm}{Theorem}
\newtheorem{cor}{Corollary}
\newtheorem{lemma}{Lemma}
\theoremstyle{definition}
\newtheorem{defn}{Definition}
\newtheorem{example}{Example}
\newcommand\A{{\mathbb A}}
\newcommand\C{{\mathbb C}}
\newcommand\N{{\mathbb N}}
\newcommand{\cN}{{\mathcal N}}
\newcommand{\ti}{\vartheta}
\newcommand{\Ti}{\Theta}
\newcommand\cC{{\mathcal C}}
\newcommand\cB{{\mathcal B}}
\newcommand\Z{{\mathbb Z}}
\newcommand\cP{{\mathcal P}}
\newcommand\cQ{{\mathcal Q}}
\newcommand\cR{{\mathcal R}}
\newcommand\CS{{\mathfrak C}}
\newcommand\al{\alpha}
\newcommand{\be}{\beta}
\newcommand\la{\lambda}
\newcommand\s{{\sigma}}
\newcommand\om{{\varpi}}
\newcommand\Sh{{\mathcal S}}
\newcommand\ssm{\smallsetminus}
\newcommand\gequ{\geq}
\newcommand\lequ{\leq}
\newcommand\noin{\noindent}
\newcommand\eqto{\stackrel{\lower1.5pt\hbox{$\scriptstyle\sim\,$}}\to}
\newcommand\ov{\overline}
\newcommand\rsa{\rightsquigarrow}
\newcommand\wh{\widehat}
\newcommand\wt{\widetilde}
\newcommand\dis{\displaystyle}
 \DeclareMathOperator{\Pf}{Pfaffian}
\DeclareMathOperator{\LG}{LG} \DeclareMathOperator{\IG}{IG}
\DeclareMathOperator{\G}{G} 
\DeclareMathOperator{\HH}{\mathrm{H}}
\newcommand{\ignore}[1]{}
\newcommand{\pic}[2]{\includegraphics[scale=#1]{#2}}
\begin{document}

\title[Giambelli, Pieri, and tableau formulas via raising operators]
{Giambelli, Pieri, and tableau formulas via raising~operators}

\date{January 20, 2011}

\author{Harry~Tamvakis} \address{University of Maryland, Department of
Mathematics, 1301 Mathematics Building, College Park, MD 20742, USA}
\email{harryt@math.umd.edu}

\subjclass[2000]{Primary 05E15; Secondary 14M15, 14N15, 05E05}

\thanks{The author was supported in part by 
  NSF Grants DMS-0639033 and DMS-0901341.}

\begin{abstract}
We give a direct proof of the equivalence between the Giambelli and
Pieri type formulas for Hall-Littlewood functions using Young's
raising operators, parallel to joint work with Buch and Kresch for the
Schubert classes on isotropic Grassmannians. We prove several closely
related mirror identities enjoyed by the Giambelli polynomials, which
lead to new recursions for Schubert classes.  The raising operator
approach is applied to obtain tableau formulas for the Hall-Littlewood
functions, the theta polynomials of \cite{BKT2}, and related Stanley
symmetric functions.  Finally, we introduce the notion of a skew
element $w$ of the hyperoctahedral group and identify the set of
reduced words for $w$ with the set of standard $k$-tableaux on a skew
shape $\la/\mu$.
\end{abstract}

\maketitle

\setcounter{section}{-1}

\section{Introduction}
The classical Schubert calculus is concerned with the algebraic
structure of the cohomology ring of the Grassmannian $\G(m,N)$ of
$m$-dimensional subspaces of complex affine $N$-space. The cohomology
has a free $\Z$-basis of Schubert classes $\s_{\la}$, induced by the
natural decomposition of $\G(m,N)$ into a disjoint union of Schubert
cells. On the other hand, the ring is generated by the Chern classes
of the universal quotient bundle $\cQ$ over $\G(m,N)$, also known as
the {\em special} Schubert classes. The theorems of Pieri \cite{Pi}
and Giambelli \cite{G} are fundamental building blocks of the subject:
the former is a rule for a product of a general Schubert class
$\s_{\la}$ with a special one, while the latter is a formula equating
$\s_{\la}$ with a polynomial in the special classes.  When one
expresses the Chern classes involved in terms of the Chern roots of
$\cQ$, the Schubert classes are replaced by Schur $S$-polynomials,
thus exhibiting a link between the Schubert calculus and the ring of
symmetric functions.

In a series of papers with Buch and Kresch \cite{BKT1, BKT2}, we
obtained corresponding results for the Grassmannians parametrizing
(non-maximal) isotropic subspaces of a vector space equipped with a
nondegenerate symmetric or skew-symmetric bilinear form.  Our
solution of the Giambelli problem for isotropic Grassmannians uses the
{\em raising operators} of Young \cite{Y} in an essential way; the
resulting formula interpolates naturally between a Jacobi-Trudi
determinant and a Schur Pfaffian. A rather different context in which
a Giambelli type formula appears that has this interpolation property
is the theory of Hall-Littlewood symmetric functions \cite{Li, M};
these objects also satisfy a Pieri rule \cite{Mo}.

The raising operator approach allows one to see {\em directly} that
the Pieri and Giambelli results are formally {\em equivalent} to each
other in all the above instances. This amounts to showing that {\em
the Giambelli polynomials satisfy the Pieri rule}, for the converse
implication then follows easily. As a consequence, working either in
the context of Schubert calculus or the theory of symmetric functions,
it suffices to prove either of the two theorems to establish them
both. In geometry, the Pieri rule may be proved concisely by studying
a triple intersection of the appropriate Schubert cells, following
Hodge \cite{H}; this is the method used in \cite{BKT1}.

When working algebraically, it is convenient to use the Giambelli
formula as the starting point, and this is the point of view of the
present article. One advantage of the raising operator definition is
that it makes clear that the basic Giambelli polynomials, which are
indexed by partitions $\la$, make sense when the index is any finite
sequence of integers, positive or negative. When the index is a
partition, it is the axis of reflection for a pair of {\em mirror
identities}, both closely connected to the Pieri rule. The 
simplest example is in the case of Schur $S$-functions:
\[
\sum_{\al\geq 0} s_{\la+\al} = \sum_{\mu\supset\la}s_\mu
\quad \text{ and } \quad
\sum_{\al\geq 0} s_{\la-\al} = \sum_{\mu\subset\la}s_\mu
\]
where the sums are over all compositions $\al$ and partitions $\mu$
obtained from $\la$ by adding (respectively subtracting) a horizontal
strip.  In \S \ref{recsec} we use raising operators and the mirror
identities to obtain top row recursion formulas for Schubert classes
on isotropic Grassmannians.

Our central thesis up to this point is that the aforementioned results
may be proved without recourse to their realizations in terms of
symmetric functions. The main application of the mirror identities,
however, lies in the latter theory, where they can be used to obtain
{\em reduction formulas} for the number of variables $x$ which appear
in the argument of a symmetric polynomial. These in turn lead directly
to {\em tableau formulas} for the polynomials in question. The tableau
formulas suggest the introduction of new symmetric functions indexed
by {\em skew Young diagrams}, and one can then study their
properties. In this way, many important aspects of the classical
theory of symmetric functions, as presented in the first three
chapters of \cite{M}, may be established by entirely different
methods. Moreover, the intention is to apply these techniques in 
a hitherto unexplored situation.

Our main result is a tableau formula for the theta polynomials of
\cite{BKT2}, which are the Billey-Haiman type C Schubert polynomials 
\cite{BH} for Grassmannian elements of the hyperoctahedral group
$B_\infty$ (the union of the Weyl groups for the root systems of type
$\text{B}_n$ or $\text{C}_n$). The theorem states that for any
$k$-strict partition $\la$, we have
\begin{equation}
\label{tabeq}
\Ti_\la(x\,;y) = \sum_U 2^{n(U)}(xy)^U 
\end{equation}
where the sum is over all $k$-bitableaux $U$ of shape $\la$ (see \S
\ref{tableaux} for the precise definitions).  It bears emphasis that
our proof of (\ref{tabeq}) does not use inner products, tableau
correspondences, or algorithms such as jeu de taquin and the like. We
show that formula (\ref{tabeq}) (and its counterpart in the theory of 
Schur and Hall-Littlewood functions) follows essentially from the
definition of $\Ti_\la(x\,;y)$ using raising operators, once the Pieri
rule is established. The result is surprising because the
Pieri rule for the product $\ti_p\cdot \Ti_\la$ involves partitions
$\mu$ which do not contain the diagram of $\la$.

We are led to introduce symmetric
functions $F^{(k)}_{\la/\mu}(x)$, defined for any pair $\mu\subset
\la$ of $k$-strict partitions by the equation
\[
F^{(k)}_{\la/\mu}(x) = \sum_T 2^{n(T)} x^T
\]
where the sum is over all $k$-tableaux $T$ of skew shape $\la/\mu$.
When $k=0$, the $F^{(k)}_{\la/\mu}$ coincide with the usual skew Schur
$Q$-functions $Q_{\la/\mu}$. For general $k$, we prove that
$F^{(k)}_{\la/\mu}$ either vanishes or is equal to a certain type C
Stanley symmetric function $F_w$, and therefore is always a
nonnegative integer linear combination of Schur $Q$-functions. The
function $F_w$ is indexed by an element $w$ of the hyperoctahedral
group which we call {\em skew}. We argue that the skew elements of
$B_\infty$ are directly analogous to the 321-avoiding permutations
\cite{BJS} in the symmetric group. For example, we show that the
reduced words for $w$ are in 1-1 correspondence with the standard
$k$-tableaux of shape $\la/\mu$.

We begin this article in \S \ref{typeA} by giving a short proof using
raising operators of the equivalence between the classical Giambelli
formula and the Pieri rule. This is followed by a discussion of the
analogous result for the algebra of Hall-Littlewood functions in \S
\ref{HL}, and then the more sophisticated arguments of \cite{BKT2} in
\S \ref{typeC:general}.  The raising operators $R^\la$ which appear in
the Giambelli formulas of isotropic Schubert calculus depend on the
partition $\lambda$; this leads to a more challenging and dynamic
theory. In \S \ref{mirrorsec} we obtain the mirror identities for
Hall-Littlewood functions and isotropic Grassmannian Schubert classes,
and give some first applications. Section \ref{red&tab} contains our
treatment of the various tableau formulas. In \S \ref{tabHL} we stop
short of proving the (known) tableau formula for skew Hall-Littlewood
functions, since the more difficult case of theta polynomials and the
skew functions $F^{(k)}_{\la/\mu}(x)$ is handled in detail in \S
\ref{theta1} -- \S \ref{theta4}. Finally, in \S \ref{sss} we define
the skew elements of the hyperoctahedral group and relate the results
of \S \ref{red&tab} to the Billey-Haiman Schubert polynomials and type
C Stanley symmetric functions.

For simplicity we will restrict to the type A and C theories
throughout the article, and not discuss the analogous results for
orthogonal Grassmannians and Weyl groups here. We hope that \S
\ref{typeA} and the examples of \S \ref{pierirulegen} provide a useful
introduction to \cite{BKT2}. To emphasize that the formulas in \S
\ref{typeA} - \S \ref{mirrorsec} do not depend on their specific
realizations in the Schubert calculus or the theory of symmetric
functions, we have used the nonstandard notation $U_\la$, $V_\la$,
$W_\la$ for the basic polynomials, and in \S \ref{oddsendsA}, \S
\ref{oddsendsHL}, and \S \ref{oddsendsCgen} briefly discuss how these
relate to other known objects.

Many thanks are due to my collaborators Anders Buch and Andrew
Kresch for their inspired work on related projects.

\section{The type A theory}
\label{typeA}

\subsection{Raising operators} An {\em integer sequence} or 
{\em integer vector} is a sequence of integers $\{\al_i\}_{i\geq 1}$,
only finitely many of which are non-zero. The largest integer
$\ell\geq 0$ such that $\al_\ell\neq 0$ is called the {\em length} of
$\al$, denoted $\ell(\al)$; we will identify an integer sequence of
length $\ell$ with the vector consisting of its first $\ell$ terms.
We let $|\al| = \sum \al_i$ and $\#\al$ equal the number of non-zero
parts $\al_i$ of $\al$. We write $\al\geq \be$ if $\al_i \geq \be_i$
for each $i$.  An integer vector $\al$ is a {\em composition} if
$\al_i\geq 0$ for all $i$ and a {\em partition} if $\al_i \geq
\al_{i+1}\geq 0$ for all $i$. As is customary, we represent a
partition $\la$ by its Young diagram of boxes, which has $\la_i$ boxes
in row $i$. We write $\mu\subset\la$ instead of $\mu\leq\la$ for the
containment relation between two Young diagrams; in this case the
set-theoretic difference $\la\ssm\mu$ is called a skew diagram and
denoted $\la/\mu$.

For two integer sequences $\al$, $\be$ such that $|\al|= |\be|$, we
say that $\al$ {\em dominates} $\be$ and write $\al\succeq \be$ if
$\al_1+\cdots + \al_i\geq \be_1+\cdots +\be_i$ for each $i$.  Given
any integer sequence $\alpha=(\alpha_1,\alpha_2,\ldots)$ and $i<j$, we
define
\[
R_{ij}(\alpha) = (\alpha_1,\ldots,\alpha_i+1,\ldots,\alpha_j-1,
\ldots);
\] 
a raising operator $R$ is any monomial in these
$R_{ij}$'s. Note that we have $R\,\al\succeq \al$ for any integer sequence
$\al$; conversely, if $\al\succeq \be$, there exists a
raising operator $R$ such that $\al = R\,\be$. See 
\cite[I.1]{M} for more information.

\subsection{The Giambelli formula} 

Consider the polynomial ring $A=\Z[u_1,u_2,\ldots]$ where the $u_i$
are countably infinite commuting independent variables. We regard $A$
as a graded ring with each $u_i$ having graded degree $i$, and
adopt the convention here and throughout the paper that $u_0=1$ while
$u_r=0$ for $r<0$. For each integer vector $\al$, set $u_{\al} =
\prod_iu_{\al_i}$; then $A$ has a free $\Z$-basis consisting of the
monomials $u_{\la}$ for all partitions $\la$. 

Given a raising operator $R$, define $R\,u_{\al} = u_{R\al}$. Here we
view the raising operator $R$ as acting on the index $\al$, and not on
the monomial $u_\al$ itself. Thus, if the components of $\al$ are a
permutation of the components of $\be$, it may happen that $R\,u_{\al}
\neq R\, u_{\be}$ even though $u_\al = u_\be$ as elements of
$A$. Notice that if $\al_\ell<0$ for $\ell=\ell(\al)$, then 
$R\,u_\al=0$ in $A$ for any raising operator $R$.

For any integer vector $\al$, define $U_{\al}$ by the {\em Giambelli
formula}
\begin{equation}
\label{giambelliA}
U_{\al} := \prod_{i<j} (1-R_{ij})\, u_{\al}.
\end{equation}
Although the product in (\ref{giambelliA}) is infinite, if we expand
it into a series of terms we see that only finitely many of the
summands are non-zero; hence, $U_{\al}$ is well defined.  For any
partition $\la$, we clearly have
\[
U_{\la} = u_{\la} + \sum_{\mu\succ\la}a_{\la\mu} u_\mu
\]
where $a_{\la\mu}\in\Z$ and the sum is over partitions $\mu$ which
strictly dominate $\la$.  We deduce that the $U_{\la}$ for $\la$ a
partition form another $\Z$-basis of $A$.

We have $U_{(r,s)} = (1-R_{12})\, u_{(r,s)} = 
u_ru_s - u_{r+1}u_{s-1}$, hence $U_{(r,s)} = - U_{(s-1,r+1)}$ for
any $r,s\in\Z$. This property has a straightforward generalization.

\begin{lemma} \label{commuteA}
Let $\al$ and $\be$ be integer vectors.  
Then for any $r,s\in\Z$ we have
\[ U_{(\al,r,s,\be)} = - U_{(\al,s-1,r+1,\be)} \,. \]
\end{lemma}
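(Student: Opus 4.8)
The plan is to reduce the general statement to the base case $U_{(r,s)} = -U_{(s-1,r+1)}$ already verified in the text, by isolating the action of the raising operators on the two coordinates $r$ and $s$ that are being swapped.

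Write $\gamma = (\al, r, s, \be)$ and let $p = \ell(\al)+1$, $q = p+1$ be the positions occupied by $r$ and $s$ in $\gamma$. The first step is to factor the Giambelli product $\prod_{i<j}(1-R_{ij})$ according to how each pair $(i,j)$ meets the two-element set $\{p,q\}$. Group the pairs into: (a) the single pair $(p,q)$; (b) pairs with exactly one index in $\{p,q\}$; and (c) pairs with no index in $\{p,q\}$. Since the $R_{ij}$ commute with one another, we may apply these groups in any order; apply group (c) first, then group (b), then finally $(1-R_{pq})$. After applying groups (c) and (b) to $u_\gamma$ we obtain a $\Z$-linear combination $\sum_\delta c_\delta\, u_\delta$ of monomials, and the key observation is that none of the operators in groups (b) and (c) is $R_{pq}$ itself; moreover an operator $R_{ip}$ or $R_{pj}$ only ever raises/lowers the coordinate in slot $p$ against some slot outside $\{p,q\}$, and similarly for slot $q$. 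Consequently, for each resulting index $\delta = (\delta_1,\dots)$ appearing in the sum, the operator $(1-R_{pq})$ acts only on the pair $(\delta_p,\delta_q)$, giving $(1-R_{pq})u_\delta = u_{\dots,\delta_p,\delta_q,\dots} - u_{\dots,\delta_p+1,\delta_q-1,\dots}$, which by the already-established two-variable identity equals $-(1-R_{pq})u_{\dots,\delta_q-1,\delta_p+1,\dots}$.

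The second step is a bookkeeping argument showing that the combined effect of groups (b) and (c), followed by $(1-R_{pq})$, on $u_{(\al,r,s,\be)}$ and on $u_{(\al,s-1,r+1,\be)}$ differ by exactly the sign $-1$. The cleanest way is to exhibit an involution: the transposition $\tau$ that swaps slots $p$ and $q$ acts on index sequences, and under $\tau$ the set of pairs in group (b) is permuted among itself (a pair $(i,p)$ goes to $(i,q)$ and so on) while pairs in group (c) are fixed. So $\tau$ conjugates the composite operator coming from groups (b)+(c) into itself; applying this operator to $u_{(\al,s,r,\be)}$ therefore gives $\tau$ applied to the result of applying it to $u_{(\al,r,s,\be)}$. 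Then the final factor $(1-R_{pq})$ converts the swap of the two relevant coordinates into the shift-by-one-and-negate of the base case. Assembling: $U_{(\al,r,s,\be)} = (1-R_{pq})\bigl(\text{stuff}\bigr)u_{(\al,r,s,\be)} = -(1-R_{pq})\bigl(\text{stuff}\bigr)u_{(\al,s-1,r+1,\be)} = -U_{(\al,s-1,r+1,\be)}$.

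The main obstacle I anticipate is making the commutation-and-factorization rigorous despite the product $\prod_{i<j}(1-R_{ij})$ being formally infinite: one must be careful that regrouping and reordering the factors is legitimate, which it is because, as noted in the text, when the product is expanded only finitely many terms survive (indices eventually leave the partition cone, and $u_r = 0$ for $r<0$), so all manipulations take place inside a finite sum. A secondary subtlety is verifying that operators touching slot $p$ or slot $q$ never interfere with the pair $(p,q)$ in a way that would invalidate applying the two-variable identity termwise — this is really the assertion that $R_{ip}R_{qj}$-type words, restricted to the $(p,q)$-subpair of any index they produce, act as the identity, which follows since such words change slot $p$ and slot $q$ independently via slots outside $\{p,q\}$. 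Once these points are nailed down, the rest is the routine sign computation sketched above.
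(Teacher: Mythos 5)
Your argument is correct, but it takes a genuinely different route than the paper. The paper obtains Lemma \ref{commuteA} as the $t=0$ specialization of the Hall--Littlewood Lemma \ref{commuteHL}, and proves the latter in stages: Proposition \ref{idHL} (a ``prepend $\mu$'' lemma, whose proof uses the $\Z[t]$-basis $\{v_\nu\}$) is used to strip off $\al$, then induction on $\ell(\be)$ via the last-component expansion (\ref{recurseHL}) strips off $\be$, reducing to the two-variable identity (\ref{comHL}). You instead stay inside the product $\prod_{i<j}(1-R_{ij})$ itself: you single out the factor $(1-R_{pq})$ at the two affected slots, observe that conjugation by the slot-transposition $\tau$ permutes the remaining factors among themselves (pairs $(i,p)\leftrightarrow(i,q)$ and $(p,j)\leftrightarrow(q,j)$, with group (c) fixed), and then apply the two-variable sign identity termwise. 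The trade-off is this: the paper's route transfers verbatim to the operator $\prod\frac{1-R_{ij}}{1-tR_{ij}}$, where (\ref{comHL}) is a four-term relation and a termwise sign flip is unavailable, whereas your route is shorter and more self-contained in the $t=0$ case, needing neither the basis argument behind Proposition \ref{idHL} nor the recursion.

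Two places in your write-up should be tightened. In the ``assembling'' step you silently pass from $\tau$ (which produces $(\al,s,r,\be)$) to the map $\sigma = R_{pq}^{-1}\tau$ that actually produces $\gamma'=(\al,s-1,r+1,\be)$; you should note explicitly that $\gamma'=\sigma\gamma$ and that the group-(b)/(c) operator commutes with $\sigma$ --- commutation with $\tau$ is precisely your conjugation argument, and commutation with $R_{pq}^{-1}$ is automatic since all these maps on index vectors commute --- so that the index multiset arising from $\gamma'$ is the $\sigma$-image of the one arising from $\gamma$, after which the termwise sign flip closes the proof. Also, the intermediate expansion $\sum_\delta c_\delta u_\delta$ must be held as a formal sum over index vectors, not projected into $A$ prematurely: a $\delta$ with a negative entry in slot $p$ can have $u_\delta=0$ while $u_{R_{pq}\delta}\neq 0$. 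The clean repair is the truncation you hint at: for $m\geq\max(q,\ell(\gamma),\ell(\gamma'))$ one has $U_\gamma=\prod_{1\leq i<j\leq m}(1-R_{ij})\,u_\gamma$ and likewise for $\gamma'$, because any raising-operator word involving a second index beyond the length kills the monomial; within this finite product all regrouping and the $\sigma$-commutation are legitimate.
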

\noin
We postpone the proof until \S \ref{HL}, where we derive a more general
result in the context of the Hall-Littlewood theory (Lemma
\ref{commuteHL}).

\subsection{The Pieri rule} 
\label{pieriruleA}

For any $d\geq 1$ define the raising operator $R^d$ by
\[
R^d = \prod_{1\leq i<j \leq d} (1-R_{ij}).
\]
For $p>0$ and any partition $\la$ of length $\ell$, we compute 
\[
u_p\cdot U_\la = u_p\cdot R^{\ell}\, u_{\la} =
R^{\ell+1}\cdot
\prod_{i=1}^\ell(1-R_{i,\ell+1})^{-1} \, u_{(\la,p)}
\]
\[
= R^{\ell+1}\cdot\prod_{i=1}^\ell(1+R_{i,\ell+1} + 
R_{i,\ell+1}^2 + \cdots)\,u_{(\la,p)} 
\]
and therefore
\begin{equation}
\label{initeq}
u_p\cdot U_\la = \sum_{\nu\in \cN} U_\nu,
\end{equation}
where $\cN=\cN(\la,p)$ is the set of all compositions $\nu\geq \la$
such that $|\nu| = |\la|+p$ and $\nu_j =0$ for
$j > \ell+1$. 

Call a composition $\nu\in \cN$ {\em bad} if there exists
a $j>1$ such that $\nu_j > \la_{j-1}$, and let $X$ be the set of all
bad compositions. Define an involution $\iota:X\to X$ as follows: 
for $\nu\in X$, choose $j$ maximal such that $\nu_j > \la_{j-1}$, and 
set
\[
\iota(\nu) = (\nu_1,\ldots,\nu_{j-2}, \nu_j-1,\nu_{j-1}+1,
\nu_{j+1},\ldots,\nu_{\ell+1}).
\]
Lemma \ref{commuteA} implies that $U_\nu + U_{\iota(\nu)} = 0$
for every $\nu\in X$, therefore all bad indices may be omitted from
the sum in (\ref{initeq}). We are left with the 
{\em Pieri rule}
\begin{equation}
\label{pieriA}
u_p\cdot U_\la = \sum_\nu U_\nu
\end{equation}
summed over all partitions $\nu$ with $|\nu| = |\la|+p$ such that 
$\nu_1 \geq \la_1 \geq \nu_2 \geq \la_2 \geq \cdots$. In the language
of Young diagrams, this condition means that $\nu\supset\la$ and the 
skew diagram $\nu/\la$ is a {\em horizontal $p$-strip}.

Conversely, suppose that we are given a family $\{T_\la\}$ of elements
of $A$, one for each partition $\la$, such that $T_p=u_p$ for every
integer $p\geq 0$ and the $T_\la$ satisfy the Pieri rule $T_p\cdot
T_\la = \sum_{\nu} T_\nu$, with the sum over $\nu$ as in
(\ref{pieriA}). We claim then that
\[
T_\la = U_\la = \prod_{i<j}(1-R_{ij})\, u_\la
\]
for every partition $\la$. To see this, note that the Pieri rule
implies that
\[
U_\la + \sum_{\mu\succ\la} a_{\la\mu} \, U_\mu = 
u_{\la_1}\cdots u_{\la_\ell} = 
T_\la + \sum_{\mu \succ \la} a_{\la\mu}\, T_\mu
\]
for some constants $a_{\la\mu}\in\Z$. The claim now follows by induction
on $\la$. We thus see that {\em the Giambelli formula and Pieri rule
are formally equivalent to each other}.

\subsection{The Grassmannian and symmetric functions}
\label{oddsendsA}
Equation (\ref{giambelliA}) may be written in the more
familiar form
\begin{equation}
\label{giambelliA2}
U_{\al} = \det(u_{\al_i+j-i})_{i,j}.
\end{equation}
We will prove that (\ref{giambelliA}) and (\ref{giambelliA2}) are
equivalent, following \cite[I.3]{M}. Suppose the length of $\al$ is
$\ell$ and let $\rho = (0,1,\ldots, \ell-1)$.  We work in the ring
$\Z[x_1,x_1^{-1},\ldots, x_{\ell},x_{\ell}^{-1}]$ and let $R\, x^\al =
x^{R\al}$ for any raising operator $R$. The key is to use the {\em
Vandermonde identity}
\[
\det (x_i^{j-1})_{1\leq i,j\leq \ell} = 
\prod_{1\leq i < j \leq \ell}(x_j - x_i)
\]
which implies that 
\[
\prod_{1\leq i<j\leq \ell} (1-R_{ij})\, x^{\al} = 
\prod_{1\leq i<j\leq \ell} (1-x_ix_j^{-1})\, x^{\al} = 
x^{\al-\rho}\det (x_i^{j-1})_{1\leq i,j\leq \ell} 
\]
\[
= \sum_{\s\in S_\ell}(-1)^{\s}x_1^{\al_1+\s(1)-1}\cdots
x_\ell^{\al_\ell+\s(\ell)-\ell}
= \det (x_i^{\al_i+j-i})_{1\leq i,j\leq \ell}.
\]
One now applies the $\Z$-module homomorphism $\Z[x_1,x_1^{-1},\ldots, 
x_{\ell},x_{\ell}^{-1}]\to A$ sending $x^\al$ to $u_\al$ for each $\al$
to both ends of the above equation to obtain the result.

The two standard realizations of the Pieri and Giambelli formulas in
the literature are the cohomology ring of the Grassmannian $\G=\G(m,N)$
and the ring $\Lambda$ of symmetric functions. The ring $\HH^*(\G,\Z)$
has a free $\Z$-basis of Schubert classes $\s_\la$, one for each
partition $\la$ whose diagram is contained in an $m\times (N-m)$
rectangle $\cR(m,N-m)$.  There is a ring epimorphism $\phi: A \to
\HH^*(\G,\Z)$ sending the generators $u_p$ to the special Schubert
classes $\s_p$ for $1\leq p \leq N-m$ and to zero for $p > N-m$. The
map $\phi$ satisfies $\phi(U_\la) = \sigma_\la$ if $\la \subset
\cR(m,N-m)$, and $\phi(U_\la) = 0$ otherwise. For further details,
the reader may consult e.g.\ \cite[Chapter 9.4]{Fu}. 

Let $x=(x_1,x_2,\ldots)$ be an infinite set of commuting variables,
and for each $p\geq 0$ let $h_p=h_p(x)$ be the $p$-th {\em complete
symmetric function}, that is, formal sum of all monomials in the $x_i$
of degree $p$. The ring $\Lambda = \Z[h_1,h_2,\ldots]$ may be
identified with the ring of symmetric functions in the variables
$x_i$.  There is a ring isomorphism $A\to \Lambda$ sending $u_p$ to
$h_p$ for all $p$.  For any partition $\la$, the element $U_\la$ is
mapped to the Schur $S$-function $s_\la(x)$. The equation
\[
s_{\la}(x) = \det(h_{\la_i+j-i}(x))_{1\leq i,j\leq \ell(\la)}
\]
expressing the Schur functions in terms of the complete symmetric
functions is called the Jacobi-Trudi identity 
(see e.g.\ \cite[I.(3.4)]{M}).

\section{The Hall-Littlewood theory}
\label{HL}

\subsection{The Giambelli formula}

Let $v_1,v_2,\ldots$ be an infinite family of commuting variables,
with $v_i$ having degree $i$ for all $i$. As before let $v_0=1$,
$v_r=0$ for $r<0$, $v_\al= \prod_i v_{\al_i}$ for each integer
sequence $\al$, and $R\, v_\al = v_{R\al}$ for any raising operator
$R$. Let $t$ be a formal variable and consider the graded polynomial
ring $A_t=\Z[t][v_1,v_2,\ldots]$. 

For any integer vector $\al$, define $V_{\al}\in A_t$ by the 
{\em Giambelli formula}
\begin{equation}
\label{giambelliHL}
V_{\al} := \prod_{i<j} \frac{1-R_{ij}}{1-tR_{ij}}\, v_{\al}.
\end{equation}
Let $\al=(\al_1,\ldots,\al_{\ell-1})$ be an arbitrary integer vector
and $r\in \Z$. Expanding the raising operator product in 
(\ref{giambelliHL})
\[  
\prod_{1\leq i<j \leq \ell} \frac{1-R_{ij}}{1-tR_{ij}} = 
\prod_{1\leq i<j < \ell} \frac{1-R_{ij}}{1-t R_{ij}}
\prod_{i=1}^{\ell-1}  \frac{1-R_{i\ell}}{1-t R_{i\ell}}
\]
along the last (i.e., the $\ell$-th) component of $(\al,r)$ gives
\begin{equation}
\label{recurseHL}
V_{(\al,r)} = \sum_\gamma t^{|\gamma|-\#\gamma}(t-1)^{\#\gamma} V_{\al+\gamma}
v_{r-|\gamma|},
\end{equation}
summed over all compositions $\gamma\in \N^{\ell-1}$, where
$\N =\{0,1,\ldots\}$. Equation (\ref{recurseHL}) may be used to give
a recursive definition of $V_{\al}$. The $v_\la$ and $V_\la$ for $\la$ a 
partition form two free $\Z[t]$-bases of $A_t$.

\begin{prop} \label{idHL}
Suppose that we have an equation in $A_t$
\[
\sum_\nu a_\nu V_\nu = \sum_\nu b_\nu V_\nu
\]
where the sums are over all integer vectors $\nu = (\nu_1, \ldots,
\nu_\ell)$, while $a_\nu$ and $b_\nu$ are polynomials in $\Z[t]$, only
finitely many of which are non-zero. Then we have
\[
\sum_\nu a_\nu V_{(\mu,\nu)} = \sum_\nu b_\nu V_{(\mu,\nu)}
\]
for any integer vectors $\mu$.
\end{prop}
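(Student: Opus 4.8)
The plan is to reduce everything to the recursion \eqref{recurseHL}, which expresses $V_{(\alpha,r)}$ as a $\Z[t]$-linear combination of products $V_{\alpha+\gamma}\,v_{r-|\gamma|}$ over compositions $\gamma$. The key observation is that this recursion is applied \emph{along the last component}, and the coefficients $t^{|\gamma|-\#\gamma}(t-1)^{\#\gamma}$ depend only on $\gamma$, not on $\alpha$ or on anything to the left. So prepending a fixed integer vector $\mu$ to the index simply carries $V_{(\alpha,r)} \mapsto V_{(\mu,\alpha,r)}$ and $V_{\alpha+\gamma} \mapsto V_{(\mu,\alpha+\gamma)}$ throughout, with the same scalar coefficients. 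This suggests an induction on $\ell$, the common length of the sequences $\nu$ appearing in the hypothesized identity $\sum_\nu a_\nu V_\nu = \sum_\nu b_\nu V_\nu$.

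First I would handle the base case $\ell = 0$ (or $\ell=1$, whichever is cleaner): when all $\nu$ are empty, or single integers, the statement $\sum_\nu a_\nu V_{(\mu,\nu)} = \sum_\nu b_\nu V_{(\mu,\nu)}$ should follow directly from \eqref{recurseHL} applied to the right-hand indices, or even more simply from the fact that the $V_\la$ for $\la$ a partition form a $\Z[t]$-basis (after using Lemma \ref{commuteA}'s Hall-Littlewood analogue \ref{commuteHL} to sort indices into partitions, which will be available by then). For the inductive step, given the identity among length-$\ell$ sequences, I would write each $\nu = (\nu', r)$ with $\nu'$ of length $\ell-1$ and $r$ its last entry, apply \eqref{recurseHL} to expand every $V_\nu = V_{(\nu',r)}$ as $\sum_\gamma t^{|\gamma|-\#\gamma}(t-1)^{\#\gamma} V_{\nu'+\gamma}\, v_{r-|\gamma|}$, and collect: since $\{v_p\}$ are free polynomial generators and the $V_\la$ are a basis, the original identity forces, for each fixed value of $r - |\gamma| = p$, a corresponding identity $\sum a'_{\xi} V_\xi = \sum b'_{\xi} V_\xi$ among sequences $\xi$ of length $\ell - 1$. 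Wait — that is not quite automatic because distinct $(\nu',\gamma,r)$ can contribute to the same $v_p$; but one only needs that \emph{some} identity among shorter $V$'s is produced, which can then be hit with the inductive hypothesis (prepending $\mu$), and then \eqref{recurseHL} run in reverse to reassemble $\sum_\nu a_\nu V_{(\mu,\nu)} = \sum_\nu b_\nu V_{(\mu,\nu)}$.

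Alternatively, and perhaps more transparently, I would phrase the whole thing operator-theoretically: the map sending $v_\al \mapsto v_{(\mu,\al)}$ — more precisely, the $\Z[t]$-linear "left concatenation by $\mu$" operator on the relevant graded pieces — should be shown to commute with the operators $\prod_{i<j}(1-R_{ij})/(1-tR_{ij})$ in the precise sense needed, because the raising operators $R_{ij}$ occurring in the definition of $V$ on a length-$(\ell(\mu)+\ell)$ index that mix a $\mu$-coordinate with an $\alpha$-coordinate contribute in a controlled way. This is where I expect the \textbf{main obstacle}: the raising operator product on $(\mu,\nu)$ genuinely involves the extra factors $\prod_{i \le \ell(\mu) < j}(1-R_{ij})/(1-tR_{ij})$ that couple $\mu$ to $\nu$, so "prepending $\mu$" is \emph{not} literally a ring map, and one must check that these coupling factors act by the \emph{same} operator on both sides of the hypothesized identity — i.e. that they only depend on the data $(a_\nu), (b_\nu)$ through the linear combination, which is exactly what the freeness/basis argument secures. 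I would therefore organize the proof around \eqref{recurseHL} rather than around an abstract homomorphism, since \eqref{recurseHL} isolates precisely the last-coordinate expansion and makes the $\mu$-independence of the coefficients manifest; the induction on $\ell$ then goes through, with the shorter-length identity extracted by comparing coefficients of the monomials $v_p$ after expansion.
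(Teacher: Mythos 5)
Your first approach founders on the point you yourself flag with ``Wait --- that is not quite automatic.''  After applying (\ref{recurseHL}) to every $V_{(\nu',r)}$ and grouping by $p=r-|\gamma|$ and $\xi=\nu'+\gamma$, you obtain a relation of the form $\sum_{p,\xi}C_{p,\xi}\,V_\xi\,v_p=0$, but the products $V_\xi\,v_p$ (with $\xi$ of length $\ell-1$) are \emph{not} linearly independent in $A_t$ --- for instance $V_{(2)}\,v_1=v_2v_1=V_{(1)}\,v_2$.  So the hypothesized identity does not split into separate identities ``for each fixed value of $p$,'' and there is no canonical family of shorter $V$-identities to which the inductive hypothesis could be applied.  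The reassembly step is equally problematic: when you expand $V_{(\mu,\nu',r)}$ by the last coordinate via (\ref{recurseHL}), the composition $\gamma$ has length $\ell(\mu)+\ell-1$ and perturbs the $\mu$-entries too,
\[
V_{(\mu,\nu',r)} = \sum_\gamma t^{|\gamma|-\#\gamma}(t-1)^{\#\gamma}\,V_{(\mu,\nu')+\gamma}\,v_{r-|\gamma|},
\]
so the right-hand side is not a combination of terms $V_{(\mu,\xi)}\,v_p$ to which a prepended shorter identity would apply.  Your second paragraph correctly locates the obstacle --- the coupling operator $\prod_{i\le\ell(\mu)<j}(1-R_{ij})/(1-tR_{ij})$ really is there and prepending is not a ring map --- but the closing phrase ``which is exactly what the freeness/basis argument secures'' is an assertion of the conclusion, not an argument for it.

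The missing idea is a symmetry.  The paper reduces to prepending a single integer $p$ at a time (induction on $\ell(\mu)$), and then works in the monomial basis $v_\nu$ rather than the $V$-basis: write $\sum_\nu c_\nu V_\nu = \sum_\nu\widetilde{c}_\nu v_\nu$, so that $\sum_\nu c_\nu V_{(p,\nu)} = \sum_\nu\widetilde{c}_\nu\, D\,v_{(p,\nu)}$ with $D=\prod_{j=2}^{\ell+1}(1-R_{1j})/(1-tR_{1j})$.  The crucial observation is that both $\nu\mapsto v_\nu$ and $\nu\mapsto D\,v_{(p,\nu)}$ are invariant under permuting the entries of $\nu$ (the second because $D\,v_{(p,\nu)} = \sum_{\gamma\ge 0}t^{|\gamma|-\#\gamma}(t-1)^{\#\gamma}v_{p+|\gamma|}v_{\nu-\gamma}$), and both vanish when some entry of $\nu$ is negative.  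Hence both assignments factor through sorting $\nu$ into a partition, where the $v_\nu$ genuinely are a $\Z[t]$-basis; so $\sum_\nu\widetilde{c}_\nu v_\nu=0$ forces $\sum_\nu\widetilde{c}_\nu D\,v_{(p,\nu)}=0$.  Without that symmetry argument --- i.e., without the fact that the two maps have the \emph{same} kernel on the span of the $v_\nu$ --- the transfer from the hypothesized identity to the prepended one does not go through, and your proposal as written has a genuine gap.
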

\begin{proof}       
It suffices to show that if $\sum_\nu c_\nu V_\nu = 0$ for some 
$c_\nu\in\Z[t]$, then $\sum_\nu c_\nu  V_{(\mu,\nu)} = 0$. 
We will prove that $\sum_\nu c_\nu  V_{(p,\nu)} = 0$ 
for any integer $p$; the desired result then follows by induction.

Upon expanding the raising operators in the definition of $V_\nu$, the
equation $\sum_\nu c_\nu V_\nu = 0$ becomes $\sum_\al \wt{c}_\al v_\al
= 0$ for some coefficients $\wt{c}_\al\in\Z[t]$. We therefore must
show that $\sum_\al \wt{c}_\al \Psi\, v_{(p,\al)} = 0$, where $\dis
\Psi = \prod_{j=2}^{\ell+1} \frac{1-R_{1,j}}{1-tR_{1,j}}$.  For any
integer sequence $\al=(\al_1, \ldots, \al_\ell)$ and every permutation
$\tau$ in the symmetric group $S_\ell$, define
$\tau(\al)=(\al_{\tau(1)},\ldots,\al_{\tau(\ell)})$. By exchanging
rows, we may assume that $\sum_\al \wt{c}_\al v_\al$ and $\sum_\al
\wt{c}_\al \Psi\, v_{(p,\al)}$ are both summed over {\em decreasing}
integer sequences $\al$, because
\[
\Psi \, v_{(p,\al)} = \sum_{\gamma\geq 0} 
t^{|\gamma|-\#\gamma}(t-1)^{\#\gamma} 
v_{p+|\gamma|}v_{\al-\gamma} = \Psi \, v_{(p,\tau(\al))}
\]
for every $\tau\in S_\ell$.  If $\al$ has a negative component then
clearly $v_\al=\Psi\, v_{(p,\al)}=0$.  Moreover, since the $v_\al$ for
$\al$ a partition form a $\Z[t]$-basis of $A_t$, it follows from
$\sum_\al \wt{c}_\al v_\al = 0$ that $\wt{c}_\al=0$ for all partitions
$\al$. We therefore have $\sum_\al \wt{c}_\al \Psi\, v_{(p,\al)} = 0$, as
desired.
\end{proof}

For any integers $r$ and $s$, we claim that the equation
\begin{equation}
\label{comHL}
V_{(r,s)} + V_{(s-1,r+1)} = t\,(V_{(r+1,s-1)} + V_{(s,r)})
\end{equation}
holds in the ring $A_t$. Indeed, (\ref{comHL}) follows from the
identity
\[
\frac{1-R_{12}}{1-tR_{12}} = (1-R_{12}) + t\, 
\frac{1-R_{12}}{1-tR_{12}}\, R_{12}
\]
and the fact that $(1-R_{12}) (v_{(r,s)}+v_{(s-1,r+1)}) = 0$ in $A_t$.

\begin{lemma}
\label{cdcor}
For $c\in \Z$ and $d\geq 1$, we have
\begin{equation}
\label{cdeq}
V_{(c,c+d)}+(1-t)\sum_{i=1}^{d-1}V_{(c+i,c+d-i)} =
t\,V_{(c+d,c)}\,.
\end{equation}
\end{lemma}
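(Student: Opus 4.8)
The plan is to reduce (\ref{cdeq}) to a single, uniform application of the two-row commutation relation (\ref{comHL}); this needs neither induction nor Proposition~\ref{idHL}, since every term in (\ref{cdeq}) is indexed by a vector of length~$2$.

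For $0\le k\le d-1$ I would set $h_k := V_{(c+k,\,c+d-k)} - t\,V_{(c+k+1,\,c+d-k-1)}$. The first step is the observation that $\sum_{k=0}^{d-1}h_k$ telescopes: after reindexing the $t$-part of the sum by $k\mapsto k+1$, it collapses to $V_{(c,c+d)} + (1-t)\sum_{i=1}^{d-1}V_{(c+i,c+d-i)} - t\,V_{(c+d,c)}$, which is precisely the left side of (\ref{cdeq}) minus its right side. So it suffices to prove $\sum_{k=0}^{d-1}h_k = 0$. The second step is to rewrite (\ref{comHL}) in the equivalent form $V_{(r,s)} - t\,V_{(r+1,s-1)} = t\,V_{(s,r)} - V_{(s-1,r+1)}$ and apply it with $(r,s) = (c+k,\,c+d-k)$: the left side is then $h_k$, and a direct check identifies the right side with $-h_{d-1-k}$ (the point is that $(s-1,r+1)=(c+d-k-1,\,c+k+1)$ is the first pair occurring in $h_{d-1-k}$ and $(s,r)=(c+d-k,\,c+k)$ is its second pair). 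Thus (\ref{comHL}) supplies exactly the relations $h_k + h_{d-1-k} = 0$ for every $k$. The third step: summing these over $k = 0,\dots,d-1$ and noting that $k\mapsto d-1-k$ is a permutation of $\{0,\dots,d-1\}$ yields $2\sum_{k=0}^{d-1}h_k = 0$, and since $A_t$ is an integral domain this forces $\sum_{k=0}^{d-1}h_k = 0$, i.e.\ (\ref{cdeq}).

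I do not expect a genuine obstacle here: the only slightly delicate point is the index bookkeeping in the second step, namely matching the two ``output'' indices of (\ref{comHL}) at $(r,s)=(c+k,c+d-k)$ with the two indices appearing in $h_{d-1-k}$, which is a one-line verification. (An alternative route is induction on $d$: apply (\ref{comHL}) with $(r,s)=(c,c+d)$ to eliminate $V_{(c,c+d)}$, combine the resulting $t\,V_{(c+1,c+d-1)}$ and $-V_{(c+d-1,c+1)}$ with the $i=1$ and $i=d-1$ summands, and recognize what remains as the statement for the pair $(c+1,\,d-2)$; this works for $d\ge 3$ with $d=1,2$ handled directly, but it carries an unnecessary parity split, so I would prefer the telescoping argument above.)
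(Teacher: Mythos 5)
Your proof is correct, and it takes a genuinely different route from the paper's. The paper proves Lemma~\ref{cdcor} by induction on $d$ (with base case $d=1$ from (\ref{comHL}) with $s=r+1$, then reducing $d$ to $d-2$ after shifting $c\mapsto c+1$ and absorbing the $i=1$ and $i=d-1$ terms), which is essentially the alternative you sketch at the end. Your primary argument instead packages the identity as $\sum_{k=0}^{d-1}h_k=0$ with $h_k = V_{(c+k,c+d-k)} - t\,V_{(c+k+1,c+d-k-1)}$, observes that the rewritten form $V_{(r,s)} - t\,V_{(r+1,s-1)} = t\,V_{(s,r)} - V_{(s-1,r+1)}$ of (\ref{comHL}) applied at $(r,s)=(c+k,c+d-k)$ gives precisely $h_k = -h_{d-1-k}$, and sums over the involution $k\mapsto d-1-k$ to get $2\sum_k h_k = 0$. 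This is a clean, induction-free pairing argument; the only extra ingredient is that $A_t=\Z[t][v_1,v_2,\ldots]$ is $2$-torsion-free (indeed an integral domain), so the factor of $2$ can be cancelled. Both proofs rest solely on (\ref{comHL}); yours replaces the paper's two-step recursion (and its implicit handling of two base cases $d=1,2$) with a single uniform cancellation, at the small cost of invoking torsion-freeness, which the inductive argument avoids. All the index bookkeeping in your step two checks out: with $(r,s)=(c+k,c+d-k)$ one has $(s-1,r+1)=(c+d-k-1,c+k+1)$ and $(s,r)=(c+d-k,c+k)$, which are exactly the two indices in $h_{d-1-k}$.
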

\begin{proof}
We use induction on $d$. When $d=1$ the result follows by setting
$s=r+1$ in (\ref{comHL}). If $d>1$, the inductive hypothesis
gives
\[
(1-t)\sum_{i=1}^{d-1}V_{(c+i,c+d-i)} =
V_{(c+d-1,c+1)} - t\, V_{(c+1,c+d-1)}.
\]
The identity (\ref{cdeq}) thus reduces to
\[
V_{(c,c+d)} + V_{(c+d-1,c+1)} - t \,V_{(c+1,c+d-1)} = t\,V_{(c+d,c)},
\]
and this follows directly from equation (\ref{comHL}).
\end{proof}

We can generalize the identity (\ref{comHL}) as follows.

\begin{lemma} 
\label{commuteHL}
Let $\al$ and $\be$ be integer vectors.
Then for any $r,s\in\Z$ we have
\begin{equation}
\label{cHL} 
V_{(\al,r,s,\be)} + V_{(\al,s-1,r+1,\be)} = 
t\,(V_{(\al,r+1,s-1,\be)} + V_{(\al,s,r,\be)})
\end{equation}
in the ring $A_t$.
\end{lemma}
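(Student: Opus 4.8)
The plan is to reduce Lemma \ref{commuteHL} to the special case (\ref{comHL}) already established, using Proposition \ref{idHL} to handle the prefix $\al$ and a symmetric argument to handle the suffix $\be$. First I would observe that equation (\ref{comHL}) can be rewritten as
\[
V_{(r,s)} + V_{(s-1,r+1)} - t\,V_{(r+1,s-1)} - t\,V_{(s,r)} = 0,
\]
which is precisely an identity of the form $\sum_\nu c_\nu V_\nu = 0$ over length-two integer vectors $\nu$, with coefficients $c_\nu \in \Z[t]$. Proposition \ref{idHL} then applies verbatim (taking $\mu = \al$) to give
\[
V_{(\al,r,s)} + V_{(\al,s-1,r+1)} = t\,(V_{(\al,r+1,s-1)} + V_{(\al,s,r)})
\]
for any integer vector $\al$. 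This disposes of the prefix with essentially no work.

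Next I would handle the suffix $\be$. The cleanest route is to prove a mirror version of Proposition \ref{idHL}: if $\sum_\nu c_\nu V_\nu = 0$, then $\sum_\nu c_\nu V_{(\nu,\mu)} = 0$ for any integer vector $\mu$. This should follow by the same expansion argument, now expanding the raising operators $\frac{1-R_{ij}}{1-tR_{ij}}$ along the \emph{first} components rather than the last; alternatively, one can deduce it from Proposition \ref{idHL} together with Lemma \ref{cdcor} (or directly from (\ref{comHL})) by noting that reversing an integer vector and negating/shifting via repeated use of (\ref{cHL}) relates $V_{(\nu,\mu)}$ to $V_{(\mu',\nu')}$ — but that is circular here, so I would instead just redo the expansion proof. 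Applying this mirror statement to the already-established identity in the $(\al,r,s)$ variables, with the ``$\nu$'' playing the role of $(\al,r,s)$-indexed combinations and $\mu = \be$ appended on the right, yields (\ref{cHL}) in full generality.

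A subtlety to watch: in applying Proposition \ref{idHL} the sums must be over integer vectors of a \emph{fixed} length $\ell$, and the four terms in (\ref{comHL}) are all length-two, so this is fine; similarly the mirror statement must be formulated for fixed length. I would also note that, strictly, to append $\be$ on the right one iterates the mirror statement one component at a time (appending $\be_1$, then $\be_2$, etc.), exactly as Proposition \ref{idHL} is proved by induction on the length of $\mu$; the base case is a single appended integer and the inductive step is immediate.

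I expect the main (minor) obstacle to be cleanly stating and proving the mirror of Proposition \ref{idHL} — i.e.\ checking that the expansion of $D' = \prod_{j} \frac{1-R_{j,\ell+1}}{1-tR_{j,\ell+1}}$ acting on $v_{(\nu,p)}$ behaves symmetrically under permutations of $\nu$ and kills negative components, so that the argument ``$\wt c_\nu = 0$ for partitions $\nu$, hence $\sum \wt c_\nu D' v_{(\nu,p)} = 0$'' goes through. This is entirely parallel to the computation $D\,v_{(p,\nu)} = \sum_{\gamma\geq 0} t^{|\gamma|-\#\gamma}(t-1)^{\#\gamma} v_{p+|\gamma|}v_{\nu-\gamma}$ in the proof of Proposition \ref{idHL}, with the roles of the first and last rows interchanged, so no genuinely new idea is required. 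Everything else is a direct appeal to the two lemmas/propositions already in hand.
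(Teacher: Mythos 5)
Your plan to use Proposition \ref{idHL} to absorb the prefix $\al$ agrees with the paper and is fine. The proposed handling of the suffix $\be$, however, has a genuine gap: \emph{the ``mirror'' of Proposition \ref{idHL} is false}. The statement ``$\sum_\nu c_\nu V_\nu = 0$ implies $\sum_\nu c_\nu V_{(\nu,\mu)}=0$'' fails already for $\nu$ of length $1$ and $\mu$ of length $1$: take $c_{(-1)}=1$ and all other $c_\nu = 0$. Then $\sum_\nu c_\nu V_\nu = V_{(-1)} = v_{-1} = 0$, yet
\[
V_{(-1,1)} = \frac{1-R_{12}}{1-tR_{12}}\,v_{(-1,1)} = (t-1)\,v_{(0,0)} = t-1 \neq 0.
\]
The reason the expansion argument in the proof of Proposition \ref{idHL} does not transfer is precisely the ``kills negative components'' step you flag as a minor subtlety. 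When one expands $D=\prod_j\tfrac{1-R_{1j}}{1-tR_{1j}}$ on $v_{(p,\nu)}$, the raising operators \emph{subtract} from the components of $\nu$, so a negative component stays negative and the whole term dies; that is what lets the proof discard the $\nu$'s with negative entries alongside the partitions (for which $\wt c_\nu = 0$). In the mirrored operator $D'=\prod_j\tfrac{1-R_{j,\ell+1}}{1-tR_{j,\ell+1}}$ acting on $v_{(\nu,p)}$, the raising operators \emph{add} to the components of $\nu$, so a negative component can be raised to a nonnegative one and $D'v_{(\nu,p)}$ can survive even though $v_\nu=0$. This is not a cosmetic difference; it is exactly where the symmetry between prepending and appending breaks, and it is why the paper does not prove a mirror of Proposition \ref{idHL}.

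What the paper does instead, after reducing to $\al=\emptyset$, is an induction on the length of $\be$ using the explicit recursion (\ref{recurseHL}): writing $\be=(\be',b)$ and $\mu=(r,s,\be')$, one expands each of the four terms $V_{(\mu^{(i)},b)}$ via
\[
V_{(\mu,b)} = \sum_\gamma t^{|\gamma|-\#\gamma}(t-1)^{\#\gamma}\,V_{\mu+\gamma}\,v_{b-|\gamma|},
\]
then, using that the coefficient $t^{|\gamma|-\#\gamma}(t-1)^{\#\gamma}$ is symmetric in $\gamma_1\leftrightarrow\gamma_2$, regroups the four sums so that for each fixed $(\gamma_1,\gamma_2,\gamma')$ one sees exactly the four-term combination of the inductive hypothesis applied with $(r',s')=(r+\gamma_1,s+\gamma_2)$ and the shorter suffix $\be'+\gamma'$. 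Note this is genuinely a statement about the specific four-term relation (\ref{comHL}) and its permutation-symmetric coefficients; it is not an application of a general appending principle, and no such principle is available.
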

\begin{proof}
By Proposition \ref{idHL}, we may assume that $\al$ is empty. If
$\beta=(\beta',b)$ has positive length, where $b\in \Z$, we set
$\mu=(r,s,\beta')$ and the identity follows by induction, since
\[
V_{(\mu,b)} = 
\sum_\gamma t^{|\gamma|-\#\gamma}(t-1)^{\#\gamma} V_{\mu+\gamma}
v_{b-|\gamma|}.
\]
Finally, if both $\alpha$ and $\beta$ are empty, the result is true by
(\ref{comHL}).
\end{proof}

 Notice that Lemma \ref{commuteA} is the specialization of Lemma
\ref{commuteHL} at $t=0$.  Using Lemma \ref{commuteHL} with
$\al=\emptyset$ and arguing as in the proof of Lemma \ref{cdcor} gives
the following result.

\begin{cor}
\label{cdcorab}
For $c\in \Z$, $d\geq 1$, and $\be$ any integer vector, 
we have
\[
V_{(c,c+d,\be)}+(1-t)\sum_{i=1}^{d-1}V_{(c+i,c+d-i,\be)} = 
t\,V_{(c+d,c,\be)}\,.
\]
\end{cor}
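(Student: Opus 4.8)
The plan is to mimic exactly the proof of Lemma~\ref{cdcor}, replacing each two-component identity there with its $(\al,-,-,\be)$-sandwiched version coming from Lemma~\ref{commuteHL}. I would argue by induction on $d\geq 1$, with $\al$ and $\be$ fixed arbitrary integer vectors throughout.

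For the base case $d=1$, the claimed identity reads $V_{(\al,c,c+1,\be)} = t\,V_{(\al,c+1,c,\be)}$ (the sum $\sum_{i=1}^{0}$ being empty), and this is precisely the specialization $r=c$, $s=c+1$ of equation~(\ref{cHL}): the two left-hand terms become $V_{(\al,c,c+1,\be)}+V_{(\al,c,c+1,\be)}$ and the two right-hand terms become $t\,(V_{(\al,c+1,c,\be)}+V_{(\al,c+1,c,\be)})$, wait—more carefully, with $s-1=c$ and $r+1=c+1$ the left side is $2V_{(\al,c,c+1,\be)}$ only if those coincide; actually $V_{(\al,r,s,\be)}=V_{(\al,c,c+1,\be)}$ and $V_{(\al,s-1,r+1,\be)}=V_{(\al,c,c+1,\be)}$ do coincide, and $V_{(\al,r+1,s-1,\be)}=V_{(\al,c+1,c,\be)}=V_{(\al,s,r,\be)}$, so~(\ref{cHL}) gives $2V_{(\al,c,c+1,\be)}=2t\,V_{(\al,c+1,c,\be)}$, hence the $d=1$ case after dividing by $2$.

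For the inductive step, assume the identity for $d-1$ (with the same $\al,\be$ but with $c$ replaced by $c+1$), which states
\[
(1-t)\sum_{i=1}^{d-1}V_{(\al,c+i,c+d-i,\be)} = V_{(\al,c+d-1,c+1,\be)} - t\,V_{(\al,c+1,c+d-1,\be)}.
\]
Substituting this into the left-hand side of the desired identity reduces the claim to
\[
V_{(\al,c,c+d,\be)} + V_{(\al,c+d-1,c+1,\be)} - t\,V_{(\al,c+1,c+d-1,\be)} = t\,V_{(\al,c+d,c,\be)},
\]
and this is exactly~(\ref{cHL}) with $r=c$, $s=c+d$: there $V_{(\al,r,s,\be)}+V_{(\al,s-1,r+1,\be)} = V_{(\al,c,c+d,\be)}+V_{(\al,c+d-1,c+1,\be)}$ and $t\,(V_{(\al,r+1,s-1,\be)}+V_{(\al,s,r,\be)}) = t\,(V_{(\al,c+1,c+d-1,\be)}+V_{(\al,c+d,c,\be)})$, so rearranging yields precisely the reduced equation. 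This completes the induction.

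I do not anticipate a genuine obstacle: the whole point is that Lemma~\ref{commuteHL} already packages the hard work (the reduction to $\al$ empty via Proposition~\ref{idHL} and the inductive stripping of $\be$), so the present corollary is a purely formal consequence obtained by running the scalar computation of Lemma~\ref{cdcor} verbatim with the extra $\al,\be$ entries carried along. The only point requiring the slightest care is bookkeeping the index shift in the inductive hypothesis ($c\mapsto c+1$, $d\mapsto d-1$) and checking the telescoping of the $(1-t)$-weighted sum, which is routine.
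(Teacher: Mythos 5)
Your proof is correct and is essentially the paper's intended argument. The paper's own justification for Corollary~\ref{cdcorab} is a one-line remark that it follows from Lemma~\ref{commuteHL} and the argument in the proof of Lemma~\ref{cdcor}; that is exactly what you execute, using equation~(\ref{cHL}) in place of~(\ref{comHL}) at every step. Your base case is right, and your final reduction to~(\ref{cHL}) with $r=c$, $s=c+d$ is right.

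One bookkeeping slip in the inductive step is worth flagging, even though it does not invalidate the argument. You attribute the displayed intermediate formula
\[
(1-t)\sum_{i=1}^{d-1}V_{(\al,c+i,c+d-i,\be)} = V_{(\al,c+d-1,c+1,\be)} - t\,V_{(\al,c+1,c+d-1,\be)}
\]
to ``the identity for $d-1$ with $c$ replaced by $c+1$.'' That instance of the corollary is
\[
V_{(\al,c+1,c+d,\be)}+(1-t)\sum_{i=1}^{d-2}V_{(\al,c+1+i,c+d-i,\be)} = t\,V_{(\al,c+d,c+1,\be)},
\]
whose indices all sum to $2c+d+1$, not $2c+d$, so it is not the statement you need. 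The intermediate formula you wrote is instead what the inductive hypothesis for the pair $(c+1,d-2)$ yields after you peel off the $i=1$ and $i=d-1$ summands and rearrange (and the case $d=2$ must be noted separately, since $(c+1,0)$ is out of range; there the intermediate formula is a tautology). The paper's proof of Lemma~\ref{cdcor} is equally terse on this point (``the inductive hypothesis gives\ldots''), and the formula itself is correct, so this is a misattribution in the parenthetical rather than a gap in the reasoning. Your overall plan---run the scalar computation of Lemma~\ref{cdcor} verbatim with the $\al,\be$ blocks carried along, relying on Lemma~\ref{commuteHL} to have absorbed the reduction via Proposition~\ref{idHL}---is exactly right.
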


\subsection{The Pieri rule} 
\label{pieriruleHL}

Let $\la\subset\mu$ be two partitions such that $\mu/\la$ is a
horizontal strip, and let $J$ be the set of integers $c\geq 1$ such
that $\mu/\la$ does not (respectively does) have a box in column $c$
(respectively column $c+1$). Define
\[
\psi_{\mu/\la}(t) = \prod_{c\in J}(1-t^{m_c(\la)}),
\]
where $m_c(\la)$ denotes the number of parts of $\la$ that are 
equal to $c$. 

For any $d\geq 1$ define the raising operator $R_t^d$ by
\[
R_t^d = \prod_{1\leq i<j \leq d} \frac{1-R_{ij}}{1-tR_{ij}}.
\]
Given a partition $\la$ of length $\ell$, we compute 
\[
v_p\cdot V_\la = v_p\cdot R_t^{\ell}\, v_{\la} = R_t^{\ell+1}\cdot
\prod_{i=1}^\ell\frac{1-tR_{i,\ell+1}}{1-R_{i,\ell+1}} \, v_{(\la,p)}
\]
\[
= R_t^{\ell+1}\cdot\prod_{i=1}^\ell(1+(1-t)R_{i,\ell+1} + 
(1-t)R_{i,\ell+1}^2 + \cdots)\,v_{(\la,p)} 
\]
and therefore
\begin{equation}
\label{1steq}
v_p\cdot V_\la = \sum_{\nu\in \cN} (1-t)^{\#(\nu-\la)}\,V_\nu,
\end{equation}
where $\cN=\cN(\la,p)$ is the set of compositions $\nu\geq \la$
such that $|\nu| = |\la|+p$ and $\nu_j =0$ for
$j > \ell+1$, as in \S \ref{pieriruleA}.

For any integer composition $\nu$, set 
$\nu^* = (\nu_2,\nu_3,\ldots)$. Define
\[
\cN^*=\{\nu\in\cN\ |\ \nu_j\leq\la_{j-1} \text{ for } j>2\}
\]
and for $\nu\in \cN^*$, let
\[
T_{\nu} = (1-t)^{\#(\nu_1-\la_1)}\psi_{\nu^*/\la^*}(t)\,V_\nu \,.
\] 
Using Proposition \ref{idHL} and induction on the length of $\la$
gives
\begin{equation}
\label{2ndeq}
\sum_{\nu\in \cN} (1-t)^{\#(\nu-\la)}\,V_\nu =
\sum_{\nu\in\cN^*}T_{\nu}.  
\end{equation}
Note that if $\nu\in \cN^*$ satisfies
$\nu_2<\la_1$, then $T_{\nu} = \psi_{\nu/\la}(t) \, V_\nu$.
Therefore
\begin{equation}
\label{intereq}
\sum_{\nu\in\cN^*}T_{\nu} = \sum_{{\nu\in \cN^*}\atop{\nu_2<\la_1}}
\psi_{\nu/\la}(t) \, V_\nu + \sum_{{\nu\in
\cN^*}\atop{\nu_2\geq\la_1}} T_\nu.
\end{equation}
We claim that for each fixed $d\geq 0$,
\begin{equation}
\label{star2}
\sum_{{\nu\in \cN^*}\atop{{\nu_2\geq \la_1}
\atop{\nu_1+\nu_2=2\la_1+d}}} T_\nu \ = \ 
\sum_{{\nu\in\cN^*}\atop{{\nu_2=\la_1}\atop{\nu_1 = \la_1+d}}} 
\psi_{\nu/\la}(t) \, V_\nu.
\end{equation}
Equation (\ref{star2}) is justified by using Corollary \ref{cdcorab}
to fix all rows except the first two. We then apply the identity
\begin{equation}
\label{basic1}
V_{(c,c+d)}+(1-t)\sum_{i=1}^{d-1}V_{(c+i,c+d-i)} + (1-t)\,V_{(c+d,c)} 
= V_{(c+d,c)} 
\end{equation}
when $\nu\ssm\la$ has a box in column $c=\la_1$, and the identity
\begin{equation}
\label{basic2}
(1-t^m)\,V_{(c,c+d)}+(1-t)(1-t^m)\sum_{i=1}^{d-1}V_{(c+i,c+d-i)} 
+ (1-t)\,V_{(c+d,c)} = (1-t^{m+1})\,V_{(c+d,c)}
\end{equation}
for $m\geq 1$, otherwise. The identities (\ref{basic1}) and
(\ref{basic2}) follow directly from Lemma \ref{cdcor}, proving the
claim. 

Summing (\ref{star2}) over all $d\geq 0$, applying the result to
(\ref{intereq}), and taking (\ref{1steq}) and (\ref{2ndeq}) into
account, we finally obtain the {\em Pieri rule}
\begin{equation}
\label{pieriHL}
v_p\cdot V_\la = \sum_{\mu} \psi_{\mu/\la}(t) \, V_\mu,
\end{equation}
summed over all partitions $\mu\supset\la$ with
$|\mu| = |\la|+p$ and $\mu/\la$ a horizontal $p$-strip.

\medskip

Conversely, suppose that we are given a family $\{T_\la\}$ of elements
of $A_t$, one for each partition $\la$, such that $T_p=v_p$ for every
integer $p\geq 0$ and the $T_\la$ satisfy the Pieri rule $T_p\cdot
T_\la = \sum_{\mu} \psi_{\mu/\la}(t) \, T_\mu$, with the sum over
$\mu$ and $\psi_{\mu/\la}(t)$ as in (\ref{pieriHL}).  It follows then
as in \S \ref{pieriruleA} that
\[
T_\la = V_\la = \prod_{i<j}\frac{1-R_{ij}}{1-tR_{ij}}\, v_\la.
\] 
We have established that the Giambelli formula (\ref{giambelliHL}) 
and Pieri rule (\ref{pieriHL}) are formally equivalent to each other.

\subsection{Hall-Littlewood symmetric functions}
\label{oddsendsHL}

Let $x=(x_1,x_2,\ldots)$ be as in \S \ref{oddsendsA}, define the formal 
power series $q_r(x\,;t)$ by the generating equation
\[
\prod_{i=1}^{\infty}\frac{1-x_itz}{1-x_iz} = \sum_{r=0}^{\infty}q_r(x\,;t)z^r
\]
and set $\Gamma_t = \Z[t][q_1,q_2,\ldots]$. There is a $\Z[t]$-linear
ring isomorphism $A_t\to\Gamma_t$ sending $v_r$ to $q_r(x\,;t)$ for all
$r\geq 1$. For any partition $\la$, the element $V_\la$ is mapped to the
{\em Hall-Littlewood function} $Q_\la(x\,;t)$. The raising operator
formula (\ref{giambelliHL}) for Hall-Littlewood functions is due to
Littlewood \cite{Li}, who also obtained Lemma \ref{commuteHL} in this
setting. The Pieri rule (\ref{pieriHL}) for the functions $Q_\la(x\,;t)$
was first proved by Morris \cite{Mo}; an alternative proof may be
found in \cite[III.5]{M}.

If $\Lambda$ denotes the ring of symmetric functions from \S
\ref{oddsendsA}, then the $\Z[t]$-linear ring isomorphism
$A_t\to\Lambda[t]$ sending $v_r$ to $h_r(x)$ for every $r\geq 1$ maps
$V_\la$ to the {\em modified Hall-Littlewood function} $Q_\la'(x\,;t)$
of Lascoux, Leclerc, and Thibon \cite{LLT}. Now specialize $t=-1$ in
the definition of the ring $A_t$ and its distinguished basis $V_\la$,
and consider the isomorphism $A_{-1}\to\Lambda$ obtained by mapping
$v_r$ to the $r$-th elementary symmetric function $e_r(x)$ for every
$r\geq 1$. The basis element $V_\la$ is then mapped to the
$\wt{Q}$-polynomial $\wt{Q}_\la(x)$ of Pragacz and Ratajski \cite{PR}.

\section{The type C theory: Isotropic Grassmannians}
\label{typeC:general}

Fix an integer $k\geq 0$.  The aim of this section is to explain the
main conclusions of \cite{BKT1, BKT2} regarding the Giambelli
and Pieri formulas for the Grassmannian $\IG=\IG(n-k,2n)$ of isotropic
$(n-k)$-dimensional subspaces of $\C^{2n}$, equipped with a symplectic
form. We provide an exposition of those aspects relevant to the
present article, including examples, and refer the reader to the
original papers for detailed proofs.

\subsection{The Giambelli formula} 
We consider an infinite family $w_1,w_2,\ldots$ of commuting variables,
with $w_i$ of degree $i$ for all $i$, and set $w_0=1$, $w_r=0$ for $r<0$, 
and $w_\al= \prod_i w_{\al_i}$ as before. 
Let $I^{(k)}\subset \Z[w_1,w_2,\ldots]$ be the ideal generated by the
relations
\begin{equation}
\label{kpresrels}
\frac{1-R_{12}}{1+R_{12}}\, w_{(r,r)} = 
w_r^2 + 2\sum_{i=1}^r(-1)^i w_{r+i}w_{r-i}= 0
\ \ \ \text{for} \  r > k.
\end{equation}
Define the graded ring $B^{(k)}=\Z[w_1,w_2,\ldots]/{I^{(k)}}$. All
equations in $B^{(k)}$, such as the Giambelli and Pieri formulas, will
be valid only up to the elements of $I^{(k)}$.

Let $\Delta^{\circ} = \{(i,j) \in \N \times \N \mid 1\leq i<j \}$,
equipped with the partial order $\leq$ defined by $(i',j')\leq (i,j)$
if and only if $i'\leq i$ and $j'\leq j$.  A finite subset $D$ of
$\Delta^{\circ}$ is a {\em valid set of pairs} if it is an order
ideal, i.e., $(i,j)\in D$ implies $(i',j')\in D$ for all $(i',j')\in
\Delta^{\circ}$ with $(i',j') \leq (i,j)$.

A partition $\la$ is {\em $k$-strict} if all its parts greater than $k$
are distinct. Given a $k$-strict partition $\la$, we define a set
of pairs $\cC(\la)$ by
\[
\cC(\la) = \{(i,j)\in\Delta^{\circ}\ |\ \la_i+\la_j > 2k+j-i \ \,
\text{and} \ \, j \leq \ell(\la)\}.
\]
It is easy to check that $\cC(\la)$ is a valid set of pairs.
Conversely, assuming $k>0$, let $D$ be any 
valid set of pairs $D$ and set $d_i=\#\{j\ |\ (i,j)\in D\}$. Then the 
prescription
\[
\la_i = 
\begin{cases}
k+1+d_i & \text{if $d_i>0$}, \\
k & \text{if $d_i=0$}
\end{cases}
\]
for $1\leq i \leq d_1+1$ defines a $k$-strict partition $\la$ such
that $\cC(\la)= D$.  The white dots in Figure \ref{Csetandrim}
illustrate a typical valid set of pairs.

\begin{figure}
\centering
\includegraphics[scale=0.3,viewport=0 0 700 240]{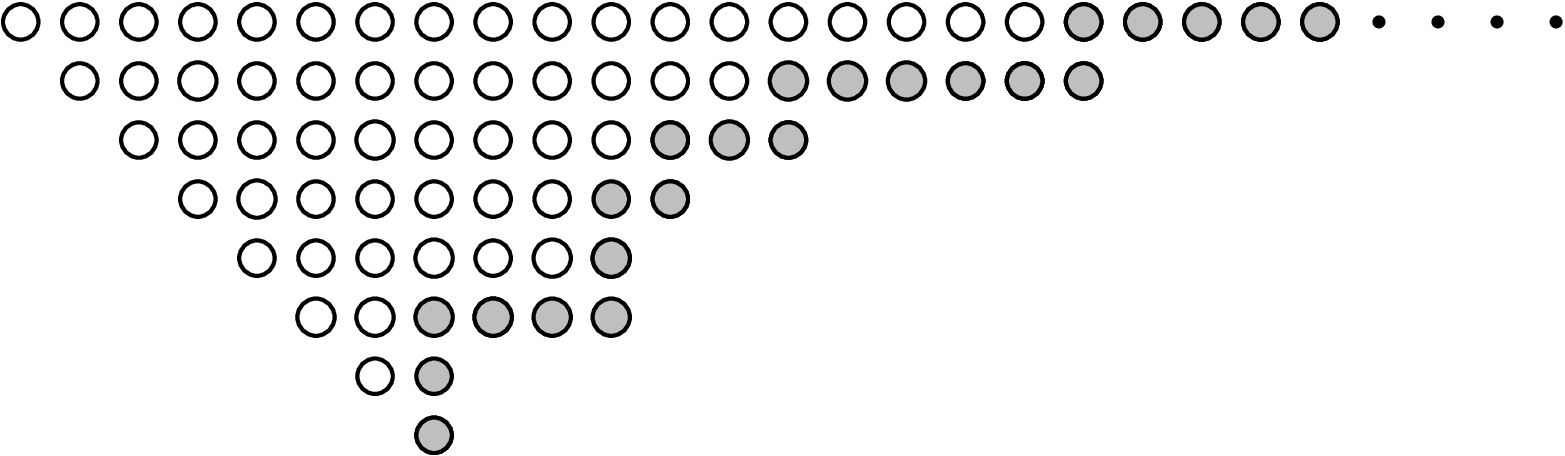}
\caption{A set of pairs (in white) and its outside
rim (in grey).}
\label{Csetandrim}
\end{figure}

For any valid set of pairs $D$, we define the raising operator
\[
R^D = \prod_{i<j}(1-R_{ij})\prod_{i<j\, :\, (i,j)\in D}(1+R_{ij})^{-1}.
\]
If $\la$ is a $k$-strict partition, then set $R^\la := R^{\cC(\la)}$. 
For any such $\la$, define $W_{\la}\in B^{(k)}$ by the 
{\em Giambelli formula}
\begin{equation}
\label{giambelliCgen}
W_{\la} := R^\la\, w_{\la}.
\end{equation}
In contrast to (\ref{giambelliA}) and (\ref{giambelliHL}), we see that
the raising operator $R^{\la}$ in the Giambelli definition
(\ref{giambelliCgen}) depends on $\la$.
When $\la_i+\la_j\leq 2k+j-i$ for all $i<j$, equation
(\ref{giambelliCgen}) becomes $W_\la = \prod_{i<j}(1-R_{ij})\, w_\la$,
while when $\la_i+\la_j > 2k+j-i$ for $1\leq i<j\leq \ell(\la)$, the
formula is equivalent to $\dis W_\la =
\prod_{i<j}\frac{1-R_{ij}}{1+R_{ij}}\, w_\la$.

\begin{example} In the ring $B^{(2)}$ we have
\begin{align*}
W_{621} 
&= \frac{1-R_{12}}{1+R_{12}}\frac{1-R_{13}}{1+R_{13}}(1-R_{23})\, w_{621} \\
&= (1-2R_{12}+2R_{12}^2 - \cdots)
(1-2R_{13}+2R_{13}^2 - \cdots)
(1-R_{23}) \, w_{621} \\
&= (1-2R_{12}+2R_{12}^2 - 2 R_{12}^3)(1-2R_{13}-R_{23})\, w_{621} \\
&= w_{621}-w_{63}-2w_{711}+4w_{81}-2w_{9} \\
&= w_6w_2w_1 -w_6w_3 -2w_7w_1^2 +4w_8w_1 -2w_9.
\end{align*}
\end{example}

From equation (\ref{kpresrels}) we deduce that either the partition
$\la$ is $k$-strict, or $w_\la$ is a $\Z$-linear combination of the
$w_\mu$ such that $\mu$ is $k$-strict and $\mu \succ \la$. It follows
that the $w_\la$ for $\la$ $k$-strict span $B^{(k)}$ as an abelian
group. A dimension counting argument shows that in fact, the $w_\la$
for $\la$ $k$-strict form a $\Z$-basis of $B^{(k)}$.  From the
definition (\ref{giambelliHL}) it follows that for any partition
$\la$, $W_\la$ is of the form
\[
W_\la = w_\la + \sum_{\mu\succ\la} a_{\la\mu}\, w_\mu
\]
with coefficients $a_{\la\mu}\in\Z$. We deduce that 
\[
W_\la = w_\la + \sum_{\mu\succ\la} b_{\la\mu} \, w_\mu
\]
with the sum restricted to $k$-strict partitions $\mu\succ\la$. Therefore,
the $W_\la$ as $\la$ runs over $k$-strict partitions form a $\Z$-basis of
$B^{(k)}$.

More generally, given a valid set of pairs $D$ and an integer sequence
$\alpha$, we denote $R^D\, w_{\al}$ by $W^D_\al$. We say that $D$ is
the {\em denominator set} of $W^D_\al$.  If $r,s\in
\Z$, then
\[
W^{\emptyset}_{(r,s)} = - W^{\emptyset}_{(s-1,r+1)}
\]
while if $D\neq \emptyset$ and $r+s>2k$, then
\[
W^D_{(r,s)} = - W^D_{(s,r)}
\]
in the ring $B^{(k)}$. To generalize these equations, care is
required, as e.g.\ the direct analogue of Lemma \ref{commuteA} for the
$W^D_\al$ fails. Suppose $D$ is a valid set of pairs and $j\geq 1$.
We say that the pair $(j,j+1)$ is {\em $D$-tame} if

\medskip
(i) $(j,j+1)\notin D$ and for all
$h<j$, $(h,j)\notin D$ if and only if $(h,j+1)\notin D$, or 

\medskip
(ii)
$(j,j+1)\in D$ and for all $h>j+1$, $(j,h)\in D$ if and only if
$(j+1,h)\in D$.

\begin{lemma}[\cite{BKT2}]
\label{commuteAC}
Let $\al=(\al_1,\ldots,\al_{j-1})$ and
$\be=(\be_{j+2},\ldots,\be_\ell)$ be integer vectors, and assume that
$(j,j+1)$ is $D$-tame. 

\medskip
\noin
{\em (a)} If $(j,j+1)\notin D$, then 
for any $r,s\in\Z$ we have
\[ W^D_{(\al,r,s,\be)} = - W^D_{(\al,s-1,r+1,\be)} \,. \]

\medskip
\noin
{\em (b)} If $(j,j+1)\in D$, then for any
$r,s\in \Z$ such that $r+s > 2k$, we have
\[ W^D_{(\al,r,s,\be)} = - W^D_{(\al,s,r,\be)} \,. \]
\end{lemma}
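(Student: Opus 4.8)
The plan is to reduce Lemma \ref{commuteAC} to the two-row identities $W^{\emptyset}_{(r,s)} = -W^{\emptyset}_{(s-1,r+1)}$ and $W^D_{(r,s)} = -W^D_{(s,r)}$ (for $r+s>2k$ and $D\neq\emptyset$) that are already recorded just before the statement, exactly mirroring how Lemma \ref{commuteHL} was deduced from (\ref{comHL}) via Proposition \ref{idHL}. The first task is therefore to establish the analogue of Proposition \ref{idHL} in the ring $B^{(k)}$: any linear relation $\sum_\nu c_\nu W^D_\nu = 0$ among the $W^D_\nu$ (over two-term indices $\nu$, with $\al$ a fixed prefix absorbed into $D$ by shifting indices) remains valid after prepending a common vector $\mu$ on the left and after appending a common vector $\be$ on the right. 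Prepending is handled just as in Proposition \ref{idHL}: expand the operator $R^D$ along the added top rows, use that the $w_\la$ for $\la$ $k$-strict form a $\Z$-basis of $B^{(k)}$, and note that the extra raising factors $\frac{1-R_{1j}}{1+R_{1j}}$ act $S_\ell$-symmetrically on the lower rows of $v_{(p,\nu)}$ — the same "exchange rows" argument. Appending a common suffix $\be$ is done by induction on $\ell(\be)$ using the last-row expansion (\ref{recurseHL})-type formula, which in the type C setting reads $W^D_{(\gamma,b)} = \sum_\delta (\pm)\, W^{D'}_{\gamma+\delta}\, w_{b-|\delta|}$ for an appropriate valid set $D'$; the key point is that the operator factors touching the last row expand into a sum over compositions $\delta$ in the remaining rows, so a relation in the first rows is preserved.

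With this "shift" proposition in hand, the proof of Lemma \ref{commuteAC} proper proceeds by first invoking it to strip off $\al$ (reducing to $j=1$, at the cost of replacing $D$ by the induced valid set of pairs on the rows $\{1,\dots,\ell(\be)+2\}$ — here one checks that $D$-tameness of $(j,j+1)$ for the original $D$ becomes $D'$-tameness of $(1,2)$, which is the content of conditions (i)/(ii): (i) says the columns $j$ and $j+1$ of $D$ "look the same" above, (ii) says rows $j$ and $j+1$ look the same to the right, and either way the relevant factor $\frac{1-R_{12}}{1+R_{12}}$ or $(1-R_{12})$ commutes past the rest of $R^{D'}$). Then, with $\al$ empty and $(1,2)$ $D$-tame, one peels off $\be$ by induction using the last-row expansion exactly as in the proof of Lemma \ref{commuteHL}, reducing to the base case $\al=\be=\emptyset$, which is precisely one of the two two-row identities quoted before the lemma — case (a) uses $W^{\emptyset}_{(r,s)} = -W^{\emptyset}_{(s-1,r+1)}$ when $(1,2)\notin D'$ (so $D'$ contributes no $(1,2)$ factor and the $\frac{1}{1+R_{1h}}$ factors are absent by tameness (i)), and case (b) uses $W^{D}_{(r,s)} = -W^{D}_{(s,r)}$ for $r+s>2k$ when $(1,2)\in D'$.

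The main obstacle I anticipate is the bookkeeping of how the $D$-set transforms under prepending/appending rows, and in particular verifying that $D$-tameness is exactly the combinatorial hypothesis that makes the relevant raising-operator factor \emph{commute} with the remaining factors of $R^D$ after the reduction to two rows — the naive analogue fails (as the authors warn), so the tameness conditions (i) and (ii) must be used precisely at the point where one wants to pull the $(1,2)$-factor (or its absence) out past $R^{D'}$ and apply the two-row identity. A secondary subtlety is that case (b) carries the side condition $r+s>2k$, which must be shown to be inherited correctly through the append/prepend steps and to be exactly what is needed so that the relevant instance of (\ref{kpresrels}) (equivalently $\frac{1-R_{12}}{1+R_{12}}w_{(r,r)}=0$ in $B^{(k)}$) is available; prepending/appending does not change the entries $r,s$, so this is automatic, but it should be stated. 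Everything else is a routine transcription of the Hall-Littlewood arguments of \S\ref{HL} into the presentation ring $B^{(k)}$.
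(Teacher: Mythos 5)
The statement you are proving is cited in the paper as Lemma~\ref{commuteAC}~[BKT2]; the paper itself offers no proof, so there is no ``paper's proof'' to compare against directly. Judged on its own, your plan has a real gap in its central reduction step.

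Your strategy is to transcribe the proof of Lemma~\ref{commuteHL}: establish a type C analogue of Proposition~\ref{idHL} that allows prepending a prefix $\al$, peel off $\al$ to reduce to $j=1$, then peel off $\be$ by a last-row expansion, and land on the two-row identities stated just before the lemma. The appending step is fine, and it is worth observing (as you do not) that it \emph{preserves} the side condition in case (b): the last-row factors involve only $R_{j,\ell}$ and $R_{j+1,\ell}$, which raise positions $j$ and $j+1$, so the quantity $r+s$ can only increase. The trouble is the prepending step. Proposition~\ref{idHL} works because, after expanding out the raising operators, $\sum_\nu c_\nu V_\nu=0$ forces $\wt c_\nu=0$ for all partitions $\nu$, since $\{v_\nu\}$ is a free $\Z[t]$-basis of $A_t$; one then prepends trivially. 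In $B^{(k)}$ the $w_\nu$ for arbitrary partitions $\nu$ do \emph{not} form a basis---only the $k$-strict ones do---so a relation $\sum_\nu c_\nu w_\nu=0$ does not kill coefficients, and ``expand and re-prepend'' is not automatic. The paper's substitute, Proposition~\ref{basicpropC}, is precisely an argument of this kind, but it is tied to the specific prepending operator $\prod_{j\ge 2}\frac{1-R_{1j}}{1+R_{1j}}$ and rests on a bespoke algebraic identity for three consecutive $\frac{1-R}{1+R}$ factors. A valid set $D$ will in general give a mixture of $(1-R_{1j})$ and $\frac{1-R_{1j}}{1+R_{1j}}$ factors in row~1, and your proposal does not supply the corresponding identity, nor does it explain how $D$-tameness of $(j,j+1)$ survives the ``shift-and-absorb'' re-indexing you gesture at. For case~(b) this is especially delicate because the two-row base identity $W^D_{(r,s)}=-W^D_{(s,r)}$ holds only modulo $I^{(k)}$ (and only for $r+s>2k$), so the prepend step would have to propagate an ideal membership, not an identity in $\Z[w_1,w_2,\ldots]$; a bare transcription of the Hall--Littlewood proof does not do this.

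For what it is worth, case~(a) admits a shorter, self-contained argument that bypasses prepending entirely. Let $\tau$ be the transposition of positions $j,j+1$ acting on index vectors, write $R^D=(1-R_{j,j+1})\,C$, and observe that $D$-tameness is exactly the statement that conjugation by $\tau$ (which interchanges $R_{i,j}\leftrightarrow R_{i,j+1}$ and $R_{j,m}\leftrightarrow R_{j+1,m}$) permutes the factors of $C$ without changing the product, i.e. $C\tau^*=\tau^*C$ on the free module of index vectors. A two-line computation then gives $R^D[\gamma]+R^D[\sigma\gamma]=(1-\tau^*)R^D[\gamma]$ with $\gamma=(\al,r,s,\be)$, $\sigma\gamma=(\al,s-1,r+1,\be)$, and since the map to $B^{(k)}$ (indeed to $\Z[w_1,w_2,\ldots]$) is invariant under $\tau^*$, this vanishes. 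Case~(b) is genuinely harder: the same conjugation maneuver applied to $E=\frac{1-R_{j,j+1}}{1+R_{j,j+1}}$ produces a formal Laurent series rather than a power series, and on unpacking one is left to show a sum of terms of the form $w_{\text{other}}\cdot\bigl(F(a,b)+F(b,a)\bigr)$ vanishes, where not all $a+b$ need exceed $2k$; your proposal does not address this cancellation. That is the missing idea.
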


\begin{example}
The ring $B:=B^{(0)}$ has free $\Z$-basis consisting of the $W_\la$ 
for $\la$ a strict partition, and the Giambelli formula 
(\ref{giambelliCgen}) is equivalent to the identity
\begin{equation}
\label{giambelliC}
W_\la = \prod_{i<j}\frac{1-R_{ij}}{1+R_{ij}}\, w_\la.
\end{equation}
Lemma \ref{commuteAC} in this case gives $W_{(\al,r,s,\be)} =
-W_{(\al,s,r,\be)}$ whenever $r+s > 0$.
\end{example}

\subsection{The Pieri rule} 
\label{pierirulegen}
We begin our analysis in the same way as in the previous sections.
For $p>0$ and any $k$-strict partition $\la$ of length $\ell$, we find 
that
\begin{equation}
\label{starteq}
w_p\cdot W_\la = \sum_{\nu\in \cN} W^{\cC(\la)}_\nu,
\end{equation}
where $\cN=\cN(\la,p)$ is the set of all compositions $\nu\geq \la$
such that $|\nu| = |\la|+p$ and $\nu_j =0$ for $j > \ell+1$. The
problem with the right hand side of (\ref{starteq}) is two-fold:
first, the compositions $\nu\in\cN$ are not $k$-strict partitions, and
second, the denominator set $\cC(\la)$ is the same for each term in
the sum, and will have to be modified so as to agree with the summands
in the eventual Pieri rule. Rather than give the complete argument,
we will state the Pieri rule which results and then discuss some
features of the proof.

We let $[r,c]$ denote the box in row $r$ and column $c$ of a Young
diagram. Suppose that $c\leq k<c'$. We say that the boxes $[r,c]$ and
$[r',c']$ are {\em $k$-related\/} if $c+c'=2k+2+r-r'$.  In the diagram
of Figure \ref{krelated}, the two grey boxes are $k$-related.
\begin{figure}
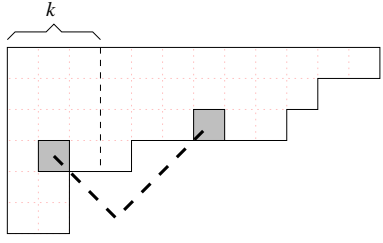

\centering
\pic{0.65}{shiftedrel} 
\caption{Two $k$-related boxes in a $k$-strict Young diagram}
\label{krelated}
\end{figure}

Given two partitions $\la$ and $\mu$ with $\la\subset\mu$, the skew
Young diagram $\mu/\la$ is called a {\em vertical strip} if it does
not contain two boxes in the same row. For any two $k$-strict
partitions $\la$ and $\mu$, let $\al_i$ (respectively $\be_i$) denote
the number of boxes of $\la$ (respectively $\mu$) in column $i$, for
$1\leq i\leq k$.  We have a relation $\lambda \to \mu$ if $\mu$ can be
obtained by removing a vertical strip from the first $k$ columns of
$\lambda$ and adding a horizontal strip to the result, so that for
each $i$ with $1\leq i\leq k$,

\medskip
\noin
(1) if $\be_i=\al_i$, then the box $[\al_i,i]$ 
is $k$-related to at most one box of $\mu \smallsetminus \lambda$; and

\medskip
\noin (2) if $\be_i < \al_i$, then the boxes
$[\be_i,i],\ldots,[\al_i,i]$ must each be $k$-related to
exactly one box of $\mu \smallsetminus \lambda$, and these boxes of
$\mu \smallsetminus \lambda$ must all lie in the same row.

\medskip
If $\lambda \to \mu$, we let $\A$ be the set of boxes of $\mu\ssm \la$
in columns $k+1$ and higher which are {\em not} mentioned in (1) or
(2). Define the connected components of $\A$ by agreeing that two
boxes in $\A$ are connected if they share at least a vertex. Then
define $N(\lambda,\mu)$ to be the number of connected components of
$\A$ which do not have a box in column $k+1$.  Finally, we can state
the Pieri rule for $B^{(k)}$: Given any $k$-strict partition $\lambda$
and integer $p\geq 0$,
\begin{equation}
\label{finaleq}
 w_p \cdot W_\lambda = \sum_{{\lambda \to \mu} \atop {|\mu|=|\lambda|+p}} 
 2^{N(\lambda,\mu)} \, W_\mu \,.
\end{equation}

The {\em outside rim} $\partial\cC$ of $\cC=\cC(\la)$ is the set of
pairs $(i,j)\in\Delta^{\circ}\ssm\cC$ such that $i=1$ or $(i-1,j-1) \in
\cC$; an example of these sets is displayed in Figure
\ref{Csetandrim}. The Young diagrams of the partitions $\mu$ which
appear in (\ref{finaleq}) do not all contain the diagram of $\la$, in
contrast to the Pieri rule of \S \ref{pieriruleHL} for Hall-Littlewood
functions. However, the denominator sets $\cC(\mu)$ for the partitions
$\mu$ with $\la\to\mu$ all contain $\cC$, are contained in
$\cC\cup\partial\cC$, and are empty beyond column $\ell+1$.

To prove (\ref{finaleq}), our task is to show that 
\begin{equation}
\label{toshow}
\sum_{\nu\in \cN(\la,p)} W^{\cC(\la)}_\nu = \sum_{{\lambda \to \mu} \atop
  {|\mu|=|\lambda|+p}} 2^{N(\lambda,\mu)} \, W^{\cC(\mu)}_\mu \,.
\end{equation}
It is clear that we need a mechanism to modify the denominator sets of
the terms $W^{\cC(\la)}_\nu$ on the left hand side of
(\ref{toshow}). This is based on the observation that if $D$ is a
denominator set and $(i,j)\in \Delta^{\circ}\smallsetminus D$, then
\begin{equation}
\label{mitosis}
W_{\al}^D = W_{\al}^{D\cup(i,j)} + W_{R_{ij}\al}^{D\cup (i,j)}\,.
\end{equation}
Equation (\ref{mitosis}) follows directly from the raising operator
identity
\[
1-R_{ij} = \frac{1-R_{ij}}{1+R_{ij}} + \frac{1-R_{ij}}{1+R_{ij}}\,R_{ij}.
\qedhere
\]

The following three detailed examples illustrate how Lemma \ref{commuteAC} 
and (\ref{mitosis}) may be used repeatedly to obtain (\ref{toshow}). For
simplicity, the commas are omitted from the notation for integer vectors
and pairs.

\begin{example}
\label{ex1}
The following chain of equalities holds in $B^{(1)}$.
\begin{align*}
w_1\cdot W_{3211} 
&= W_{32111}^{12} + W_{3212}^{12} + W_{3221}^{12} + W_{3311}^{12}
+ W_{4211}^{12} \\
&= W_{32111}^{12} + \left(W_{3221}^{12,13} + W_{4211}^{12,13}\right)
+ \left(W_{4211}^{12,13} + W_{5201}^{12,13}\right) \\
&= W_{32111}^{12} + (W_{3221}^{12,13,23} + W_{3311}^{12,13,23})
 + 2\, W_{4211}^{12,13} + (W_{5201}^{12,13,14}+ 
W_{6200}^{12,13,14}) \\
&= W_{32111}^{\cC(32111)} + 2\, W_{4211}^{\cC(4211)} + W_{62}^{\cC(62)}.
\end{align*}
Observe that Lemma \ref{commuteAC}(a) is used to show that 
$W_{3212}^{12} = W_{5201}^{12,13,14}=0$, 
while Lemma \ref{commuteAC}(b) implies that 
$W_{3311}^{12} = W_{3221}^{12,13,23} = W_{3311}^{12,13,23}=0$.
\end{example}

\begin{example}
\label{ex2}
The following chain of equalities holds in $B^{(1)}$.
\begin{align*}
w_3\cdot W_{21} 
&= W_{51}^{\emptyset} + W_{42}^{\emptyset}+
W_{411}^{\emptyset} + W_{33}^{\emptyset}+W_{24}^{\emptyset} \\
&\qquad + W_{321}^{\emptyset} + W_{312}^{\emptyset}+
W_{231}^{\emptyset} + W_{222}^{\emptyset}+
W_{213}^{\emptyset} \\
&=\left(W_{51}^{12}+W_6^{12}\right) + 
\left(W_{42}^{12}+ W_{51}^{12}\right) + 
\left(W_{411}^{12}+W_{501}^{12}\right) +
\left(W_{33}^{12}+W_{42}^{12}\right) \\
&\qquad +\left(W_{24}^{12}+W_{33}^{12}\right) 
+\left(W_{321}^{12}+ W_{411}^{12}\right) 
+\left(W_{231}^{12}+W_{321}^{12}\right) \\
&=W_6^{12}+2\,W_{51}^{12}+W_{42}^{12}+
\left(W_{411}^{12,13}+W_{51}^{12,13} \right) +
\left(W_{501}^{12,13}+W_6^{12,13}\right) \\
&\qquad +W_{321}^{12}+\left(W_{411}^{12,13}+W_{51}^{12,13}\right) \\
&=2\, W_6^{\cC(6)}+4\,W_{51}^{\cC(51)}+W_{42}^{\cC(42)} + 
2\, W_{411}^{\cC(411)} + W_{321}^{\cC(321)}.
\end{align*}
Here we use Lemma \ref{commuteAC}(a) to see that 
\[
 W_{312}^{\emptyset} = W_{222}^{\emptyset}+W_{213}^{\emptyset}
= W_{501}^{12,13} = 0,
\]
while Lemma \ref{commuteAC}(b) gives 
\[
W_{33}^{12} = W_{42}^{12} + W_{24}^{12} = W_{231}^{12}+W_{321}^{12} = 0.
\]
\end{example}

\begin{example}
\label{ex0}
Let $k=0$. We give a complete cancellation scheme along the 
above lines assuming that {\em the integer $p$ is less than or 
equal to $\la_\ell$}. For any integers $d,e\geq 1$ define the raising 
operator $R^{[d,e]}$ by
\[
R^{[d,e]} = \prod_{1\leq i<j \leq d} (1-R_{ij})
\prod_{1\leq i<j \leq e} (1+R_{ij})^{-1}.
\]
We compute that
\[
w_p\cdot W_\la = R^{[\ell,\ell]} \, w_{(\la,p)} =
\sum_{\nu\in\cN}R^{[\ell+1,\ell]}\,w_\nu
= \sum_{\nu\in\cN}R^{[\ell+1,\ell+1]}\,\prod_{i=1}^\ell(1+R_{i,\ell+1})
\, w_\nu
\]
and therefore
\begin{equation}
\label{initeqC}
w_p\cdot W_\la = \sum_{(\nu,\gamma)\in\cN'(\la,p)} W_{\nu+\gamma}
\end{equation}
where $\cN'(\la,p)$ denotes the set of all pairs $(\nu,\gamma)$ of
integer vectors of length at most $\ell+1$ with $\nu\in\cN$,
$\nu+\gamma\geq \la$, $|\nu+\gamma| = |\la|+p$, and
$\gamma_i\in\{0,1\}$ for $1\leq i\leq \ell$.

For every pair $(\nu,\gamma)\in \cN'(\la,p)$, define 
$\mu= \nu+\gamma$. Call a pair $(\nu,\gamma)$ 
{\em bad} if there exists a $j>1$ such that 
\begin{center}
(i) $\mu_j = \mu_{j-1}$ \ or \
(ii) $\mu_j > \la_{j-1}$
\ or \ (iii) $\mu_j = \la_{j-1}$ and $\gamma_j = 0$. 
\end{center}
Let $X$ be the set of all bad pairs in $\cN'(\la,p)$. We
next define an involution $\iota:X\to X$ as follows. For
$(\nu,\gamma)\in X$, choose $j$ maximal such that (i), (ii), or (iii)
holds. If $\mu_j= \mu_{j-1}$, we let $(\nu',\gamma') =
(\nu,\gamma)$. Otherwise, let $\varpi\in S_\ell$ be the transposition
$(j-1,j)$ and define $(\nu',\gamma')$ by setting $(\nu'_i,\gamma'_i)=
(\nu_{\varpi(i)},\gamma_{\varpi(i)})$ for $1\leq i\leq \ell$. Finally,
set $\iota(\nu,\gamma) = (\nu',\gamma')$ for each $(\nu,\gamma)\in X$.
Lemma \ref{commuteAC} applied to rows $j-1$ and $j$ gives
$W_{\nu+\gamma} = -W_{\nu'+\gamma'}$ for every $(\nu,\gamma)\in X$,
therefore all bad indices may be omitted from the sum in
(\ref{initeqC}).  

We are left with pairs $(\nu,\gamma)$ such that
$\mu=\nu+\gamma\supset\la$ is a strict partition with $\mu/\la$ a
horizontal strip. Observe that every connected component $C$ of $\mu/\la$
which does not lie in row $\ell+1$ contributes a multiplicity of $2$
to the sum (\ref{initeqC}). Indeed, if $C$ lies in rows $r$ through
$s$ with $r\leq s$, then condition (iii) implies that $\gamma_i=1$ for
$r<i\leq s$, while $\gamma_r$ can be either $0$ or $1$. We have
therefore proved the Pieri rule
\[
w_p\cdot W_\la = \sum_\mu 2^{N(\la,\mu)} \, W_\mu
\]
summed over all strict partitions $\mu\supset\la$ with $|\mu| =
|\la|+p$ and $\mu/\la$ a horizontal $p$-strip. The exponent
$N(\la,\mu)$ equals the number of connected components of
$\mu/\la$ which do not meet the first column. 
\end{example}

\medskip

In the general case, there is a substitution algorithm by which the
left hand side of (\ref{toshow}) {\em evolves} into the right hand
side. Although Examples \ref{ex1}, \ref{ex2}, and \ref{ex0} appear
encouraging, they are still far from a precise formulation of this
algorithm for a general Pieri product $w_p \cdot W_\la$.  It is
instructive to try to prove along these lines that the Pieri rule of
Example \ref{ex0} holds without the simplifying assumption that $p
\leq \la_\ell$. The complete proof for arbitrary $k\geq 0$ is given in
\cite{BKT2}; we note here that for technical reasons, the argument is
performed in type B, that is, for odd orthogonal Grassmannians. Each
summand $W^{\cC(\la)}_\nu$ for $\nu\in\cN$ in (\ref{toshow}) gives
rise to a {\em tree} of successor terms, with the branching given by
repeated substitutions using (\ref{mitosis}).  In this way, we obtain
the {\em substitution forest}; the roots of the trees in the forest
are the $W^{\cC(\la)}_\nu$ for $\nu\in\cN$. By construction, the sum
of all the roots is equal to the sum of all the leaves. When the
algorithm terminates, the set of leaves contains the terms
$W^{\cC(\mu)}_\mu$ for $\la\to\mu$, each appearing exactly
$2^{N(\lambda,\mu)}$ times; the remaining leaves are either zero or
cancel in pairs, as dictated by Lemma \ref{commuteAC}.

Once we know that the Giambelli formula (\ref{giambelliCgen}) used to
define $W_\la$ satisfies the Pieri rule (\ref{finaleq}), the same
reasoning as in the previous two sections establishes that the two
results are formally equivalent.

\subsection{The isotropic Grassmannian and theta polynomials}
\label{oddsendsCgen}

Let the vector space $E=\C^{2n}$ be equipped with a nondegenerate
skew-symmetric bilinear form. A subspace $\Sigma$ of $E$ is {\em
isotropic} if the form vanishes when restricted to $\Sigma$. The
dimensions of such isotropic subspaces $\Sigma$ range from $0$ to $n$;
when $\dim(\Sigma)=n$ we say that $\Sigma$ is a {\em Lagrangian}
subspace.

Choose $n \geq k\geq 0$ and let $\IG=\IG(n-k,2n)$ denote the
Grassmannian parametrizing isotropic subspaces of $E$.  Its cohomology
ring $\HH^*(\IG,\Z)$ has a free $\Z$-basis of Schubert classes
$\s_\la$, one for each $k$-strict partition $\la$ whose diagram is
contained in the $(n-k)\times (n+k)$ rectangle $\cR(n-k,n+k)$.
Following \cite{BKT1, BKT2}, the special Schubert classes $\s_p$ for
$\IG$ are the Chern classes of the universal quotient bundle over
$\IG$, as in \S \ref{oddsendsA}.  There is a ring epimorphism $\psi:
B^{(k)} \to \HH^*(\IG,\Z)$ sending the generators $w_p$ to the special
Schubert classes $\s_p$ for $1\leq p \leq n+k$ and to zero for $p >
n+k$. For any $k$-strict partition $\la$, we have $\psi(W_\la) =
\sigma_\la$ if $\la \subset \cR(n-k,n+k)$, and $\psi(W_\la) = 0$
otherwise.

Let $x=(x_1,x_2,\ldots)$ as in \S \ref{oddsendsA} and set 
$y=(y_1,\ldots,y_k)$. Define the formal power series
$\ti_r(x\,;y)$ by the generating equation
\[
\prod_{i=1}^{\infty}\frac{1+x_it}{1-x_it} \prod_{j=1}^k
(1+y_jt)= \sum_{r=0}^{\infty}\ti_r(x\,;y)t^r.
\]
Set $\Gamma^{(k)} = \Z[\ti_1,\ti_2,\ldots]$ to be the ring of theta
polynomials. There is a ring isomorphism $B^{(k)}\to\Gamma^{(k)}$
sending $w_p$ to $\ti_p$ for all $p$.  For any $k$-strict partition
$\la$, the element $W_\la$ is mapped to the {\em theta polynomial}
$\Ti_{\la}(x\,;y)$ of \cite{BKT2}. These polynomials agree with the
Schubert polynomials of type C defined by Billey and Haiman \cite{BH}
indexed by a Grassmannian permutation of the hyperoctahedral group;
see \cite[\S 6]{BKT2} and \S \ref{sss} of the present paper for
further information.

We next discuss this theory when $k=0$. If $\al$ has length $\ell$ and
$m>0$ is the least integer such that $2m\geq \ell$, then equation
(\ref{giambelliC}) may be written in its original form
\begin{equation}
\label{giambelliC2}
W_{\al} = \Pf(W_{\al_i,\al_j})_{1\leq i<j \leq 2m}.
\end{equation}
To see this, one may argue as in \S \ref{oddsendsA}, this time using
Schur's identity \cite[\S IX]{S}
\[
\prod_{1\leq r<s\leq 2m}\frac{x_r-x_s}{x_r+x_s} = 
\Pf\left(\frac{x_r-x_s}{x_r+x_s}\right)_{1\leq r,s \leq 2m}.
\]
In the theory of symmetric functions, the ring $\Gamma:=\Gamma^{(0)}$
is called the ring of Schur $Q$-functions.  For any strict partition
$\la$, the element $W_\la$ is mapped to the Schur $Q$-function
$Q_\la(x)$, which Schur \cite{S} defined using the Pfaffian equation
(\ref{giambelliC2}). 

Let $\phi$ denote the ring homomorphism $A_{-1} \to \Gamma$ sending
$v_p$ to $q_p(x)$ for every $p\geq 1$. Then $\phi(V_{\la}) = Q_\la(x)$
whenever $\la$ is {\em strict}, and $\phi(V_{\la}) = 0$ for non-strict
partitions $\la$.  The Pieri rule for the products $q_p\cdot Q_{\la}$
may therefore be obtained by specializing Morris' rule (\ref{pieriHL})
for the Hall-Littlewood symmetric functions when $t=-1$. The Pieri and
Giambelli formulas for the Lagrangian Grassmannian $\LG(n,2n)$ were
proved by Hiller and Boe \cite{HB} and Pragacz \cite{Pra},
respectively.

\section{Mirror identities and recursion formulas}
\label{mirrorsec}

\subsection{}
\label{phipsi}

Let $\la$ be any partition of length $\ell$. 
Our proof of the Pieri rule for $V_\la$ in \S \ref{pieriruleHL} 
began with the equation
\[
v_p\cdot V_\la = v_p\cdot R\, v_{\la} = R\, v_{(\la,p)}
\]
for an appropriate raising operator $R$, and relied on the identity
\begin{equation}
\label{straightid}
 \sum_{\al\geq 0} (1-t)^{\#\al}\,V_{\la+\al} = 
\sum_{\mu}\psi_{\mu/\la}(t) \, V_\mu
\end{equation}
where the first sum is over all compositions $\al$ of length at 
most $\ell+1$, and the second over partitions
$\mu\supset\la$ such that $\mu/\la$ is a horizonal strip. 
On the other hand, we may write $v_p\cdot R\, v_{\la} =
R'\,v_{(p,\la)}$ for a raising operator $R'$ which agrees with $R$ 
up to a shift of its indices, and attempt a similar
analysis. When $p$ is sufficiently large, we are led to  
the {\em mirror identity} to (\ref{straightid}), which is an
analogous formula for
the sum $\dis \sum_{\al\geq 0}(1-t)^{\#\al}\,V_{\la-\al}$.

We will require an auxiliary result which stems from \cite{BKT3}.

\begin{lemma}
\label{prodlemmaHL}
Let $P_r$ be the set of partitions $\mu$ with $|\mu| = r$, and 
let $m$ be a positive integer. Then the $\Z[t]$-linear map
\[
\phi\ :\ \bigoplus_{r=0}^{\lfloor\frac{m-1}{2}\rfloor}
\bigoplus_{\mu\in P_r}\Z[t] \to A_t
\]
which, for given $r$ and $\mu\in P_r$, sends the corresponding 
basis element to $v_{m-r}\, V_\mu$, is injective.
\end{lemma}
\begin{proof}
The Pieri rule (\ref{pieriHL}) implies that the image of $\phi$ is
contained in the linear span of the elements $V_{(m-r,\mu)}$ for
$0\leq r <\frac{m}{2}$ and $\mu$ in $P_r$. Now the linear map $\phi$
is represented by a block triangular matrix with invertible diagonal
matrices as the blocks along the diagonal, and hence is an isomorphism
onto its image.
\end{proof}

Suppose $\mu$ is a partition with $\mu\subset\la$ such that $\la/\mu$
is a horizontal strip.  Let $I$ denote the set of integers $c\geq 1$
such that $\la/\mu$ does (respectively does not) have a box in column 
$c$ (respectively column $c+1$). Define
\[
\varphi_{\la/\mu}(t) = \prod_{c\in I}(1-t^{m_c(\la)})
\]
and observe that for $p\geq \la_1$, the Pieri rule (\ref{pieriHL}) for 
$v_p\cdot V_\la$ may be written in the form
\begin{equation}
\label{pierimirror}
v_p\cdot V_\la  =
\sum_{r\geq 0} \sum_{{\mu\subset\la}\atop{|\mu|=|\la|-r}}
\varphi_{\la/\mu}(t) \, V_{(p+r,\mu)}.
\end{equation}

\begin{thm}
\label{mirrorHL}
For any partition $\la$, we have
\begin{equation}
\label{mirroreqHL}
\sum_{\al\geq 0} (1-t)^{\#\al} \, V_{\la-\al} 
= \sum_{\mu\subset\la}  \varphi_{\la/\mu}(t) \, V_\mu,
\end{equation}
where the first sum is over all compositions $\al$, and the second 
over partitions $\mu\subset\la$ such that $\la/\mu$ is a horizontal
strip.
\end{thm}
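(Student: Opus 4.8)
The plan is to mimic the strategy used for the straight identity \eqref{straightid}, but starting the Pieri analysis from the \emph{left} end of the index rather than the right. First I would fix $p$ large (say $p \geq \la_1 + \ell(\la)$, large enough that no cancellations or boundary effects interfere) and compute $v_p \cdot V_\la$ in two ways. On one hand, the Pieri rule in the form \eqref{pierimirror} gives
\[
v_p\cdot V_\la = \sum_{r\geq 0}\sum_{{\mu\subset\la}\atop{|\mu|=|\la|-r}} \varphi_{\la/\mu}(t)\, V_{(p+r,\mu)}.
\]
On the other hand, writing $v_p \cdot V_\la = v_p \cdot R_t^\ell\, v_\la = R'\, v_{(p,\la)}$ where $R'$ is obtained from $R_t^{\ell+1}$ by expanding the factors $\frac{1-R_{1,j}}{1-tR_{1,j}}$ involving the new first row against their inverses, exactly as in \S\ref{pieriruleHL} but with row $1$ playing the role that row $\ell+1$ played there, I would get
\[
v_p\cdot V_\la = \sum_{\be\geq 0}(1-t)^{\#\be}\, V_{(p-|\be|,\ \la+\be)}
\]
summed over compositions $\be$ of length at most $\ell$. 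Here the key point is that the raising operators $R_{1,j}$ act by $+1$ on the first slot and $-1$ on slot $j$, so the effect on the tail is to \emph{add} a composition $\be$ to $\la$ while subtracting $|\be|$ from $p$; and the geometric series $(1-R_{1,j})/(1-tR_{1,j}) \cdot$ inverse contributes the weight $(1-t)^{\#\be}$.

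Next I would use Proposition \ref{idHL} to "peel off" the first coordinate. The two displayed expansions give an identity among the $V_{(p+r,\mu)}$ and $V_{(p-|\be|, \la+\be)}$, i.e.\ an identity of the shape $\sum_\nu a_\nu V_{(p, \nu)} = \sum_\nu b_\nu V_{(p,\nu)}$ after reindexing so that the leading coordinate is literally $p$ in every term — but this requires the leading entries $p+r$ and $p-|\be|$ to be shifted back to $p$, which is not allowed directly. So instead I would argue in the reverse direction: both sides are, by construction, of the form $v_p \cdot (\text{something})$, and I want to strip the $v_p$. The cleanest route is to observe that for $p$ large the map $V_\mu \mapsto V_{(p,\mu)}$ (equivalently $v_\mu \mapsto v_{(p,\mu)}$ followed by raising operators) is injective on the span of the relevant basis elements — this follows from the $\Z[t]$-freeness of the $v_\la$ and the triangularity of $V_\la$ with respect to dominance, once one checks that the leading index is determined. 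Matching coefficients of $V_{(p+r,\mu)}$ across the two expansions then isolates, for each fixed $r$ and each $\mu \subset \la$ with $|\mu| = |\la|-r$, the relation
\[
\varphi_{\la/\mu}(t)\, V_\mu \ \text{(contribution at shift $r$)} \ = \ \sum_{{\be\geq 0}\atop{|\be|=r,\ \la+\be \text{ a rearr.\ of some partition}}} (1-t)^{\#\be}\, V_{(\text{sorted }\la+\be)}\,,
\]
and summing over all $r$ yields \eqref{mirroreqHL} after one recognizes $\sum_{r}\sum_{|\be|=r}(1-t)^{\#\be} V_{\la+\be}$...

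\emph{Correction to the indexing.} I realize the substitution is cleaner than the above: subtracting $|\be|$ from the first coordinate and adding $\be$ to the rest does \emph{not} produce $V_{\la-\al}$, so I should instead set up the left-hand side directly. The right approach: start from $v_p \cdot V_\la$ with the raising operator written so the $v_p$ sits in the \emph{last} position but the analysis runs along the \emph{first} coordinate of $\la$; then the geometric-series expansion replaces $\la_1$ by $\la_1 - \al_1$, $\la_2$ by $\la_2-\al_2$, etc., feeding the removed boxes into the $v_p$ slot, which becomes $v_{p+|\al|}$. This gives
\[
v_p\cdot V_\la = \sum_{\al\geq 0} (1-t)^{\#\al}\, V_{(\la-\al,\ p+|\al|)}
\]
summed over compositions $\al$ (of length at most $\ell$). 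Comparing with \eqref{pierimirror} rewritten with the large index placed last — using Lemma \ref{commuteHL} repeatedly to move the big part of each $V_{(p+r,\mu)}$ to the end, which for $p$ large introduces only signs/$t$-factors that must be tracked — and then invoking Proposition \ref{idHL} to strip the common large last coordinate $p$, yields \eqref{mirroreqHL}. The reindexing on the left, $\sum_{\al\geq 0}(1-t)^{\#\al} V_{\la-\al}$, is then immediate since the last coordinate $p+|\al| - |\al| = p$... no — one strips $v_p$ and the residual last coordinate varies.

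\textbf{Main obstacle.} The genuinely delicate step is the bookkeeping when transferring the "large" coordinate between the first and last slot: \eqref{pierimirror} naturally produces $V_{(p+r,\mu)}$ with the big entry \emph{first}, whereas the raising-operator expansion along the top row naturally produces the big entry \emph{last}, and reconciling these via Lemma \ref{commuteHL} / Corollary \ref{cdcorab} — and then applying Proposition \ref{idHL} correctly to cancel the $v_p$ from both sides without spurious contributions from non-partition rearrangements or negative entries — is where all the care is needed. I expect the argument to parallel the Pieri derivation in \S\ref{pieriruleHL} almost line-for-line, with $\psi_{\mu/\la}$ replaced by $\varphi_{\la/\mu}$ and the roles of "adding a horizontal strip" and "removing a horizontal strip" interchanged, the combinatorial heart being the identity of Lemma \ref{cdcor} (rather, its $\al,\be$-decorated version, Corollary \ref{cdcorab}) which governs both the $\psi$ and the $\varphi$ weights. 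Everything else — freeness of the $v_\la$ basis, triangularity of $V_\la$, legitimacy of infinite raising-operator products — is already in place from \S\ref{HL}.
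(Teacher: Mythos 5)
Your high-level plan — compare the Pieri form \eqref{pierimirror} with a raising-operator expansion of $v_p\cdot V_\la$ for $p$ large, and strip the first coordinate — is the route the paper takes, but your execution has a sign error that then manufactures a phantom obstacle. With $p$ in the first slot, $R_{1j}$ (for $j\geq 2$) adds $1$ to slot $1$, which holds $p$, and subtracts $1$ from slot $j$, which holds $\la_{j-1}$; so the expansion $v_p\cdot V_\la= R_t^{\ell+1}\prod_{j=2}^{\ell+1}\frac{1-tR_{1j}}{1-R_{1j}}\,v_{(p,\la)}$ gives $\sum_{\al\geq 0}(1-t)^{\#\al}\,V_{(p+|\al|,\,\la-\al)}$, with the large entry increasing and staying first — not $V_{(p-|\be|,\,\la+\be)}$ as in your first display, and not $V_{(\la-\al,\,p+|\al|)}$ as in your ``correction.'' Once this is set right, both \eqref{pierimirror} and the raising-operator expansion put the large entry in the first slot, so the ``transfer the big coordinate between first and last slot'' problem you identify as the main obstacle simply does not arise. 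In fact the paper's proof of Theorem~\ref{mirrorHL} never invokes Lemma~\ref{commuteHL} or Corollary~\ref{cdcorab}, and it is not a ``line-for-line parallel'' of the Pieri derivation in \S\ref{pieriruleHL}: it sits downstream of the Pieri rule rather than redoing its cancellation scheme.

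The step that genuinely requires care is the one you only gesture at: isolating, for each $r\geq 0$, the equality $\sum_{|\al|=r}(1-t)^{\#\al}V_{\la-\al}=\sum_{\mu\subset\la,\,|\mu|=|\la|-r}\varphi_{\la/\mu}(t)V_\mu$ from the single identity $\sum_{\al}(1-t)^{\#\al}V_{(p+|\al|,\la-\al)}=\sum_{r,\mu}\varphi_{\la/\mu}(t)V_{(p+r,\mu)}$. The paper does this by induction on $r$ using Lemma~\ref{prodlemmaHL}: expand each $V_{(p+s,\nu)}$ along the first component to expose a leading term $v_{p+s}V_\nu$ plus corrections with strictly larger first index, invoke the linear independence of the products $v_{m-r'}V_\mu$ (Lemma~\ref{prodlemmaHL} with $m=p+|\la|$) to extract the piece at first index $p+r$, and then re-prepend $(p+r)$ via Proposition~\ref{idHL} to continue the induction. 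Your ``$V_\mu\mapsto V_{(p,\mu)}$ is injective'' heuristic is in the right spirit but does not apply as stated, since the leading entries $p+r$ vary with $r$ and the independence statement one actually needs concerns the products $v_{p+r}\cdot V_\mu$, not the symbols $V_{(p+r,\mu)}$.
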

\begin{proof}
Choose $p>|\la|$ and let $\ell=\ell(\la)$. Expanding the Giambelli
formula with respect to the first row gives
\[
v_p\cdot V_\la = 
 R_t^{\ell+1}
\,\prod_{j=2}^{\ell+1} \, \frac{1-tR_{1j}}{1-R_{1j}} \, v_{(p,\la)} =
 R_t^{\ell+1}
\,\prod_{j=2}^{\ell+1} (1+(1-t)R_{1j}+(1-t)R_{1j}^2+\cdots)\,v_{(p,\la)}
\]
and therefore
\begin{equation}
\label{pmirror}
v_p\cdot V_\la  =
\sum_{\al\geq 0}(1-t)^{\#\al} \, V_{(p+|\al|,\la-\al)}.
\end{equation}

We compare (\ref{pierimirror}) with (\ref{pmirror}), and 
claim that for every integer $r\geq 0$, 
\begin{equation}
\label{pfeq1HL}
\sum_{|\al|=r} (1-t)^{\#\al} \, V_{(p+r,\la-\al)} 
= \sum_{{\mu\subset\la}\atop{|\mu|=|\la|-r}} 
\varphi_{\la/\mu}(t) \, V_{(p+r,\mu)}.
\end{equation}
The proof is by induction on $r$, with the case $r = 0$ being a tautology.
For the induction step, suppose that we have for some $r > 0$ that
\begin{equation}
\label{pfeq2HL}
\sum_{s\geq r} \sum_{|\al|=s} (1-t)^{\#\al} \, V_{(p+s,\la-\al)} =
\sum_{s\geq r} \sum_{{\mu\subset\la}\atop{|\mu|=|\la|-s}} 
\varphi_{\la/\mu}(t) \, V_{(p+s,\mu)}.
\end{equation}
Expanding the Giambelli formula with respect to the first component,
we obtain $v_{p+s} \, V_{\la-\al}$ as the leading term of
$V_{(p+s,\la-\al)}$, while $v_{p+s} \,V_\mu$ is the leading term of
$V_{(p+s,\mu)}$.  Using these in (\ref{pfeq2HL}) together with Lemma
\ref{prodlemmaHL} proves that
\begin{equation}
\label{pfeq3HL}
\sum_{|\al|=r} (1-t)^{\#\al} \, V_{\la-\al} = 
\sum_{{\mu\subset\la}\atop{|\mu|=|\la|-r}} 
\varphi_{\la/\mu}(t) \, V_\mu.
\end{equation}
Proposition \ref{idHL} now shows that (\ref{pfeq1HL}) is true,
completing the induction. Since we have simultaneously checked that
(\ref{pfeq3HL}) holds for every integer $r\geq 0$, the proof of
Theorem \ref{mirrorHL} is complete.
\end{proof}

\subsection{}
Let $\la$ be any $k$-strict partition of length $\ell$. In this
section, we will obtain the mirror identity to the following version 
of (\ref{toshow}):
\[
\sum_{\al\geq 0} W^{\cC(\la)}_{\la+\al} = 
\sum_{\lambda \to \mu} 2^{N(\lambda,\mu)} \, W^{\cC(\mu)}_\mu 
\]
where the first sum is over all compositions $\al$ of length at most
$\ell+1$, and the second over $k$-strict partitions $\mu$ with
$\la\to\mu$.

\begin{prop}
\label{basicpropC}
Let $\dis \Psi = \prod_{j=2}^{\ell+1} \frac{1-R_{1j}}{1+R_{1j}}$, 
and suppose that we have an equation
\[
\sum_\nu a_\nu w_\nu = \sum_\nu b_\nu w_\nu
\]
in $B^{(k)}$, where the sums are over all $\nu = (\nu_1, \ldots,
\nu_\ell)$, while $a_\nu$ and $b_\nu$ are integers only finitely many
of which are non-zero.  Then we have
\[
\sum_\nu a_\nu \Psi \,w_{(p,\nu)} = \sum_\nu b_\nu \Psi \, w_{(p,\nu)}
\]
in the ring $B^{(k)}$, for any integer $p$.
\end{prop}
\begin{proof}       
It suffices to show that $\sum_\nu c_\nu w_\nu = 0$ implies that
$\sum_\nu c_\nu \Psi \, w_{(p,\nu)} = 0$. We may assume by interchanging rows
that the sum is over partitions $\nu$, because
\[
\Psi \, w_{(p,\nu)} = \Psi \, w_{(p,\tau(\nu))}
\]
for every permutation $\tau$ in the symmetric group $S_\ell$, and
$w_\nu = \Psi \, w_{(p,\nu)} = 0$ whenever $\nu$ has a negative
component. Recall that the $w_\nu$ for $k$-strict partitions $\nu$
form a $\Z$-basis for $B^{(k)}$. Using the relations (\ref{kpresrels})
and induction on the dominance order, we see that for any partition
$\nu$, either $\nu$ is $k$-strict, or $w_\nu$ is a $\Z$-linear
combination of the $w_\mu$ such that $\mu$ is $k$-strict and $\mu
\succ \nu$. 

It follows that we can identify the sum $\sum_\nu c_\nu
w_\nu$ with a $\Z$-linear combination of relations of the form
$\dis\frac{1-R_{h,h+1}}{1+R_{h,h+1}} \, w_\nu$, where $h\geq 1$ and
$\nu$ is a partition such that $\nu_h = \nu_{h+1} > k$. Therefore it
will suffice to show that any such relation
$\dis\frac{1-R_{h,h+1}}{1+R_{h,h+1}} \, w_\nu = 0$ implies that $\dis
\Psi \,\frac{1-R_{h,h+1}}{1+R_{h,h+1}} \, w_{(p,\nu)} = 0$.  For this,
it is enough to check that
\[
\frac{1-R_{1,h}}{1+R_{1,h}}\cdot\frac{1-R_{1,h+1}}{1+R_{1,h+1}}
\cdot\frac{1-R_{h,h+1}}{1+R_{h,h+1}} \, w_{(p,\nu)} = 0.
\] 
But we have
\[
\frac{1-R_{1,h}}{1+R_{1,h}}\cdot\frac{1-R_{1,h+1}}{1+R_{1,h+1}}
\cdot\frac{1-R_{h,h+1}}{1+R_{h,h+1}} =
\frac{1-R_{1,h}}{1+R_{1,h}} - \frac{1-R_{1,h+1}}{1+R_{1,h+1}} +
\frac{1-R_{h,h+1}}{1+R_{h,h+1}}
\]
and since $\nu_h = \nu_{h+1}$, the result is clear.  
\end{proof}

The next result is an easy consequence of the Pieri rule (\ref{finaleq}).

\begin{lemma}[\cite{BKT3}]
\label{stablelm}
Let $\la$ and $\nu$ be $k$-strict partitions such that $\nu_1 >
\max(\la_1,\ell(\la)+2k)$ and $p\geq 0$. Then the coefficient of
$W_\nu$ in the Pieri product $w_p \cdot W_\la$ is equal to the coefficient
of $W_{(\nu_1+1,\nu_2,\nu_3,\ldots)}$ in the product $w_{p+1} \cdot W_\la$.
\end{lemma}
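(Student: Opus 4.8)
The statement is a stability property for the Pieri coefficients $2^{N(\la,\mu)}$ once the first part $\nu_1$ of $\mu$ is pushed far out. My plan is to compare the combinatorial conditions defining the relation $\la\to\mu$, and the exponent $N(\la,\mu)$, for the two partitions $\mu=(\nu_1,\nu_2,\nu_3,\ldots)$ and $\mu^+=(\nu_1+1,\nu_2,\nu_3,\ldots)$. First I would observe that, because $\nu_1>\max(\la_1,\ell(\la)+2k)$, the box $[1,\nu_1]$ (and, in the product $w_{p+1}\cdot W_\la$, also the extra box $[1,\nu_1+1]$) lies strictly to the right of the entire diagram of $\la$ and strictly to the right of column $k$. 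In particular neither of these boxes can be involved in the vertical-strip/$k$-related mechanism of conditions (1)--(2) of \S\ref{pierirulegen}, since those conditions only govern columns $1$ through $k$; and no box of $\mu\ssm\la$ can be $k$-related to one of them because $k$-relatedness of $[r,c]$ and $[r',c']$ forces $c\leq k<c'$ with $c+c'=2k+2+r-r'$, which bounds $c'$ in terms of $r-r'$ and hence keeps $c'$ far to the left of $\nu_1$ when $\nu_1>\ell(\la)+2k\geq c'$.

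The second step is to check that $\la\to\mu$ holds if and only if $\la\to\mu^+$ holds. Going from $\mu$ to $\mu^+$ only changes row $1$ in a column $>k$, so conditions (1) and (2) — which concern only columns $\leq k$ and the boxes $[\al_i,i]$ there — are unaffected. The requirement that $\mu\ssm\la$ (resp.\ $\mu^+\ssm\la$) be a horizontal strip outside the first $k$ columns is also preserved, since we are merely appending one more box at the right end of row $1$, and row $1$ is the top row so there is no row above it to violate the horizontal-strip condition and no conflict with $\la$ because $\nu_1>\la_1$. Thus the index sets of the two Pieri sums, restricted to partitions with first part $\nu_1$ respectively $\nu_1+1$, are in the evident bijection $\mu\leftrightarrow\mu^+$.

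The third step is to show $N(\la,\mu)=N(\la,\mu^+)$ under this bijection. Recall $N(\la,\mu)$ counts the connected components of the set $\A$ of boxes of $\mu\ssm\la$ in columns $\geq k+1$ not mentioned in (1)--(2), that do not meet column $k+1$. Passing to $\mu^+$ adds the single box $[1,\nu_1+1]$; since $\nu_1>\la_1$ this box is adjacent (shares the vertical edge) to $[1,\nu_1]\in\A$, so it is absorbed into the component of $\A$ containing $[1,\nu_1]$ without creating a new component or merging two old ones, and it does not lie in column $k+1$ (as $\nu_1+1>k+1$). Hence the component structure of $\A$ relevant to the count is unchanged, giving $N(\la,\mu)=N(\la,\mu^+)$, and therefore $2^{N(\la,\mu)}=2^{N(\la,\mu^+)}$. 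Comparing (\ref{finaleq}) for $w_p\cdot W_\la$ with (\ref{finaleq}) for $w_{p+1}\cdot W_\la$ and matching $\mu$ with $\mu^+$ (note $|\mu^+|=|\mu|+1$, consistent with the degree shift $p\to p+1$) then yields the claimed equality of coefficients.

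**Main obstacle.** The delicate point is the second step: verifying rigorously that the hypothesis $\nu_1>\ell(\la)+2k$ really does preclude the box $[1,\nu_1]$ — and the comparison of $\mu_1$ with $\la_1$ — from interacting with conditions (1) and (2), in particular that no box in the vertical-strip part of columns $1,\ldots,k$ could be $k$-related to $[1,\nu_1]$ or $[1,\nu_1+1]$. One must track the precise inequality coming from $k$-relatedness, namely that a box $k$-related to $[1,c']$ sits in some row $r'\geq 1$ with $c+c'=2k+2+1-r'\leq 2k+1$, so $c'\leq 2k$, which is comfortably less than $\nu_1$; and one must separately confirm that the bijection $\mu\leftrightarrow\mu^+$ respects the decomposition into a vertical strip removed from the first $k$ columns followed by a horizontal strip. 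These are all checkable from the definitions in \S\ref{pierirulegen}, but they are exactly the place where the specific numerical hypothesis on $\nu_1$ is consumed, so that is where I would concentrate the written argument.
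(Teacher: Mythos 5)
Your argument is correct, and since the paper itself simply cites \cite{BKT3} for this lemma without giving a proof, there is no in-paper argument to compare against; a direct verification from the definition of the Pieri rule (\ref{finaleq}) as you do is the natural route. The three steps are all sound: (i) $[1,\nu_1]$ and $[1,\nu_1+1]$ cannot be $k$-related to any box $[r,c]$ of $\la$ with $c\le k$, since that would force $r=c'-2k-1+c\ge\nu_1-2k>\ell(\la)$; (ii) hence $\la\to\nu$ if and only if $\la\to\nu^+$, via the same intermediate partition $\la^-$ (note $\la^-\subset\la$ gives $\la^-_1\le\la_1<\nu_1$, and $[1,\nu_1+1]$ is the only box in its column since $\nu_2<\nu_1$ by $k$-strictness); (iii) $[1,\nu_1]\in\A$ and $[1,\nu_1+1]$ is edge-adjacent only to it, so appending it neither creates nor merges components, and it lies in column $\nu_1+1>k+1$, whence $N(\la,\nu)=N(\la,\nu^+)$.

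One small slip worth correcting: in your ``main obstacle'' paragraph you wrote $c+c'=2k+2+1-r'\le 2k+1$ and concluded $c'\le 2k$. The left box $[r,c]$ sits in row $r$ of $\la$ (so $1\le r\le\ell(\la)$) while the right box is $[1,c']$, so the $k$-relatedness equation is $c+c'=2k+2+r-1$, giving $c'=2k+1+r-c\le 2k+\ell(\la)$. This is the bound you correctly state in your first paragraph ($\nu_1>\ell(\la)+2k\ge c'$), and it is exactly why the hypothesis requires $\nu_1>\ell(\la)+2k$ rather than merely $\nu_1>2k$; the version written in the obstacle paragraph would not have justified the conclusion if $\ell(\la)>1$.
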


We apply Lemma \ref{stablelm} to make the following important definition.

\begin{defn}
Let $\la$ and $\mu$ be $k$-strict partitions with $\mu\subset\la$,
and choose any $p \gequ \max(\la_1+1,\ell(\la)+2k)$. If
$|\la|=|\mu|+r$ and $\la\to(p+r,\mu)$, then we write
$\mu\rsa\la$ and say that $\la/\mu$
is a {\em $k$-horizontal strip}.  We define $n(\la/\mu):=
N(\la,(p+r,\mu))$; in other words, the numbers $n(\la/\mu)$ are
the exponents that appear in the Pieri product
\begin{equation}
\label{pprod}
w_p\cdot W_\la = \sum_{r,\mu}2^{n(\la/\mu)}\,W_{(p+r,\mu)}
\end{equation}
with the sum over integers $r\geq 0$ and $k$-strict partitions
$\mu\subset\la$ with $|\mu| = |\la|-r$. 
\end{defn}

Note that a $k$-horizontal strip $\la/\mu$ is a pair of partitions
$\la$ and $\mu$ with $\mu\rsa\la$. As such it depends on $\la$ and
$\mu$ and not only on the difference $\la\ssm\mu$. A similar remark
applies to the integer $n(\la/\mu)$ and the polynomials
$\varphi_{\la/\mu}(t)$, $\psi_{\la/\mu}(t)$ of \S \ref{phipsi}.
Observe also that $n(\la/\la) = 0$ and $n(\la/\mu)\geq 1$ whenever
$\la\neq\mu$.  We will study the relation $\mu\rsa\la$ and the integer
$n(\la/\mu)$ in more detail in \S \ref{tableaux}.

\begin{thm}
\label{mirrorC}
For $\la$ any $k$-strict partition we have
\begin{equation}
\label{minuseq}
\sum_{\al\geq 0} 2^{\#\al} \, W^{\cC(\la)}_{\la-\al} 
= \sum_{\mu\rsa\la} 2^{n(\la/\mu)} \, W_\mu,
\end{equation}
where the first sum is over all compositions $\al$.
\end{thm}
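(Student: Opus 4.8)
The plan is to mimic the proof of Theorem \ref{mirrorHL} almost verbatim, replacing the Hall--Littlewood ingredients by their type C counterparts. First I would fix $\ell = \ell(\la)$ and choose an integer $p$ large enough that the product $w_p\cdot W_\la$ falls into the stable range governing the definition of $\mu\rsa\la$; concretely $p \geq \max(\la_1+1,\ell(\la)+2k)$, so that by (\ref{pprod}) we may write
\[
w_p\cdot W_\la = \sum_{r\geq 0}\sum_{{\mu\subset\la}\atop{|\mu|=|\la|-r}} 2^{n(\la/\mu)}\,W_{(p+r,\mu)}.
\]
On the other hand, expanding the Giambelli operator $R^{\cC(\la)}$ with respect to the first row exactly as in Example \ref{ex0} and \S \ref{phipsi} (using $\frac{1-tR_{1j}}{1-R_{1j}}\rightsquigarrow (1+R_{1j})/(1-R_{1j}) = 1+2R_{1j}+2R_{1j}^2+\cdots$ on the relevant factors, since for $p$ large every pair $(1,j)$ lies in $\cC$ of the enlarged partition) gives the ``$p$-shifted'' identity
\[
w_p\cdot W_\la = \sum_{\al\geq 0} 2^{\#\al}\,W^{\cC(\la)}_{(p+|\al|,\la-\al)}.
\]
Comparing the two expansions, the theorem reduces to the claim that for each fixed $r\geq 0$,
\[
\sum_{|\al|=r} 2^{\#\al}\,W^{\cC(\la)}_{(p+r,\la-\al)} = \sum_{{\mu\subset\la}\atop{|\mu|=|\la|-r}} 2^{n(\la/\mu)}\,W_{(p+r,\mu)},
\]
which I would prove by induction on $r$, the case $r=0$ being trivial.

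For the induction step I would assume the ``tail'' identity (the analogue of (\ref{pfeq2HL}), obtained by summing over all $s\geq r$) and strip off the first component. Here the key tool is the type C analogue of Lemma \ref{prodlemmaHL}: I would invoke (or reprove in this setting, citing \cite{BKT3}) the injectivity of the $\Z$-linear map sending a basis element indexed by $\mu\in P_s$ to $w_{m-s}\cdot W_\mu$ for $0\leq s<m/2$, which holds because the Pieri rule (\ref{finaleq}) makes the matrix of this map block-triangular with invertible diagonal blocks in the $w_\nu$-basis. Since $w_{p+s}\cdot W_{\la-\al}$ is the leading term of $W^{\cC(\la)}_{(p+s,\la-\al)}$ (the operator $D$ contributes only lower-order corrections, exactly as in \S \ref{phipsi}), the tail identity together with this injectivity forces the unshifted equality
\[
\sum_{|\al|=r} 2^{\#\al}\,W^{\cC(\la)}_{\la-\al} = \sum_{{\mu\subset\la}\atop{|\mu|=|\la|-r}} 2^{n(\la/\mu)}\,W_\mu
\]
for every $r$. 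Summing this over all $r\geq 0$ is precisely (\ref{minuseq}), so once all the $r$-graded pieces are established the proof is finished; Proposition \ref{basicpropC} is then used (as Proposition \ref{idHL} was in Theorem \ref{mirrorHL}) to promote the unshifted identity back to the $(p+r,-)$-shifted one and close the induction loop.

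The main obstacle is making sure the combinatorics of the denominator sets behaves correctly when one strips the first row: unlike the type A and Hall--Littlewood cases, here $\cC(\la)$ appears on the left but the Pieri terms carry $\cC(\mu)$, and the factor $2^{n(\la/\mu)}$ on the right is defined only through the \emph{stable} Pieri product. So the real content is verifying that for $p$ in the stable range the first-row expansion of $R^{\cC(\la)}$ genuinely produces the coefficients $2^{\#\al}$ with the \emph{unmodified} denominator set $\cC(\la)$ on every summand $W^{\cC(\la)}_{(p+|\al|,\la-\al)}$, i.e. that no pair $(1,j)$ needs to be treated asymmetrically. This is where I would lean on Lemma \ref{stablelm} and the structure of $\cC$ beyond column $\ell+1$: since $p$ is huge, $(1,j)\in\cC$ of the relevant partition for all $2\leq j\leq \ell+1$, so the first-row factor of $R^\la$ is uniformly $\prod_j\frac{1-R_{1j}}{1+R_{1j}}$ and the expansion is clean. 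Everything else is a routine transcription of the Hall--Littlewood argument with $(1-t)\rightsquigarrow 2$, $\varphi_{\la/\mu}(t)\rightsquigarrow 2^{n(\la/\mu)}$, and $\psi_{\mu/\la}(t)\rightsquigarrow 2^{N(\la,\mu)}$.
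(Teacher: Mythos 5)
Your proposal follows the paper's route exactly: pick $p$ in the stable range, expand $w_p\cdot W_\la$ two ways (stable Pieri rule vs.\ first-row expansion of the Giambelli operator), set up an induction on $r$, and use a $B^{(k)}$-analogue of Lemma \ref{prodlemmaHL} together with Proposition \ref{basicpropC} to strip off the first row and close the loop. The paper's own proof is four lines long and says precisely this, so conceptually you have reconstructed it.

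One notational point is worth flagging, since it is exactly the subtlety that makes the type C case harder than the Hall--Littlewood one. In your displayed shifted identities you write $W^{\cC(\la)}_{(p+|\al|,\la-\al)}$, but the denominator set acting on the length-$(\ell+1)$ vector $(p+|\al|,\la-\al)$ must be $\wh{\cC}:=\cC((p,\la))$, not $\cC(\la)$. Concretely, $R^{\cC(\la)}$ has no denominator factors $(1+R_{1j})^{-1}$ at all, so $R^{\cC(\la)}\,w_{(p+|\al|,\la-\al)}$ is a genuinely different object from $R^{\wh{\cC}}\,w_{(p+|\al|,\la-\al)}$. The correct chain is
\[
w_p\cdot W_\la = R^{\ov{\cC}}\,w_{(p,\la)} = R^{\wh{\cC}}\prod_{j=2}^{\ell+1}\frac{1+R_{1j}}{1-R_{1j}}\,w_{(p,\la)} = \sum_{\al\geq 0}2^{\#\al}\,W^{\wh{\cC}}_{(p+|\al|,\la-\al)},
\]
where $\ov{\cC}$ is the shift of $\cC(\la)$ by $(i,j)\mapsto(i+1,j+1)$ and $\wh{\cC}=\ov{\cC}\cup\{(1,j):2\leq j\leq\ell+1\}$. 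You clearly know this (your final paragraph observes that $(1,j)\in\cC$ of the enlarged partition, which is precisely the statement that $\wh{\cC}$ is the right set, and your expansion factor $\frac{1+R_{1j}}{1-R_{1j}}=1+2R_{1j}+\cdots$ is correct), but the superscript in the displayed formulas and in the inductive claim should read $\wh{\cC}$. Similarly, the ``leading term'' after stripping the first component should be $w_{p+s}\cdot W^{\cC(\la)}_{\la-\al}$ rather than $w_{p+s}\cdot W_{\la-\al}$, since $\la-\al$ need not be $k$-strict and the denominator set must be recorded. With those corrections the argument is the paper's argument.
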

\begin{proof}
The argument is essentially the same as that in the proof of Theorem
\ref{mirrorHL}. We choose $p > |\la|+2k$ and prove by induction 
that for each $r\geq 0$,
\[
\sum_{|\al|=r} 2^{\#\al} \, W^{\wh{\cC}}_{(p+r,\la-\al)} 
= \sum_{{\mu\rsa\la}\atop{|\mu|=|\la|-r}} 
2^{n(\la/\mu)}\, W_{(p+r,\mu)},
\]
where $\wh{\cC}=\cC((p,\la))$.  Clearly we have an analogue of Lemma
\ref{prodlemmaHL} which holds for the algebra $B^{(k)}$, and we use
Proposition \ref{basicpropC} as a substitute for Proposition
\ref{idHL}. The point is that for such integers $p$ and $r$, the
raising operator expressions $R^{\wh{\cC}}$ and $R^{(p+r,\mu)}$ both
contain the product $\dis \Psi = \prod_{j=2}^{\ell+1}
\frac{1-R_{1j}}{1+R_{1j}}$, where $\ell=\ell(\la)$.
\end{proof}

\subsection{} 
We now give some consequences of the previous mirror identities for
the Schur $S$- and $Q$-functions. Given any integer sequence $\nu$, 
let $s_\nu(x)$ and $Q_\nu(x)$ denote the Schur functions defined 
in \S \ref{oddsendsHL} and \S \ref{oddsendsCgen}, respectively.
When $t=0$, Theorem \ref{mirrorHL} specializes to the following
well known result.

\begin{cor}
\label{corS1}
For any partition $\la$, we have
\[
\sum_{\al\geq 0} s_{\la-\al}(x) = \sum_\mu s_{\mu}(x),
\]
where the first sum is over all compositions $\al$ and the second
over all partitions $\mu\subset\la$ such that $\la/\mu$ is a 
horizontal strip.
\end{cor}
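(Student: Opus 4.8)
The plan is to obtain Corollary \ref{corS1} as the specialization $t=0$ of Theorem \ref{mirrorHL}, noting that under the ring isomorphism $A_t \to \Lambda[t]$ (or, after setting $t=0$, the map $A_0 = A \to \Lambda$ sending $v_r$ to $h_r(x)$) the basis element $V_\la$ maps to $s_\la(x)$, since at $t=0$ the raising operator product $\prod_{i<j}\frac{1-R_{ij}}{1-tR_{ij}}$ reduces to $\prod_{i<j}(1-R_{ij})$, which is exactly the defining product \eqref{giambelliA} for $U_\la$, and $U_\la$ corresponds to $s_\la(x)$ under $A \to \Lambda$. Thus it suffices to examine what the two sides of \eqref{mirroreqHL} become when $t=0$.

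First I would look at the left-hand side: $\sum_{\al \geq 0}(1-t)^{\#\al}\,V_{\la-\al}$ becomes $\sum_{\al \geq 0} s_{\la-\al}(x)$, because $(1-t)^{\#\al}$ specializes to $1$ for every composition $\al$. Here one should remark that $s_{\la-\al}(x)$ for an integer sequence $\la-\al$ is interpreted via the Jacobi--Trudi/raising-operator definition, and in particular vanishes or changes sign according to Lemma \ref{commuteA} when $\la-\al$ is not a partition; this matches the behavior of $V_{\la-\al}$ at $t=0$. Next I would examine the right-hand side: $\sum_{\mu\subset\la}\varphi_{\la/\mu}(t)\,V_\mu$ becomes $\sum_{\mu\subset\la}\varphi_{\la/\mu}(0)\,s_\mu(x)$, and the point is simply that $\varphi_{\la/\mu}(t) = \prod_{c\in I}(1-t^{m_c(\la)})$ specializes to $\prod_{c\in I} 1 = 1$ at $t=0$, independent of $\la$, $\mu$, and $I$. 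Hence the right-hand side collapses to $\sum_{\mu\subset\la} s_\mu(x)$, with the sum over partitions $\mu\subset\la$ such that $\la/\mu$ is a horizontal strip, exactly as in the statement of the corollary.

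There is essentially no obstacle here; the only thing to be careful about is bookkeeping. One must confirm that applying the $\Z$-algebra homomorphism $A \to \Lambda$, $v_r \mapsto h_r(x)$ to the $t=0$ specialization of \eqref{mirroreqHL} is legitimate term by term — which it is, since each side of \eqref{mirroreqHL} is a finite $\Z[t]$-linear combination of the $V_\nu$ (the left sum is finite because $V_{\la-\al}=0$ as soon as some component of $\la-\al$ is negative, and at $t=0$ these are just the $U_\nu$'s). So the proof amounts to: set $t=0$ in Theorem \ref{mirrorHL}, observe $(1-t)^{\#\al}\big|_{t=0}=1$ and $\varphi_{\la/\mu}(t)\big|_{t=0}=1$, and transport the resulting identity in $A$ to $\Lambda$ via $v_r\mapsto h_r(x)$, whereupon $V_\nu\mapsto s_\nu(x)$. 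If one prefers a self-contained argument avoiding the specialization bookkeeping, the same reasoning can instead be run directly inside $A$ with the $U_\la$'s, but invoking Theorem \ref{mirrorHL} at $t=0$ is the most economical route.
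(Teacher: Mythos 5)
Your route is the paper's route: specialize Theorem \ref{mirrorHL} at $t=0$ and transport via $v_r\mapsto h_r(x)$, so that $V_\nu\mapsto s_\nu(x)$. The structure of the argument is fine, but one assertion is not: you claim that $\varphi_{\la/\mu}(t)=\prod_{c\in I}\bigl(1-t^{m_c(\la)}\bigr)$ specializes to $1$ at $t=0$ ``independent of $\la$, $\mu$, and $I$.'' That is false as stated, since $(1-t^{m})\big|_{t=0}$ equals $1$ when $m\geq 1$ but equals $0$ when $m=0$. What is actually needed — and is true — is that every $c\in I$ satisfies $m_c(\la)\geq 1$: if $[r,c]$ is the box of the horizontal strip $\la/\mu$ in column $c$, then $\mu_r<c\leq\la_r$, and if $\la_r>c$ the box $[r,c+1]$ would also lie in $\la/\mu$ (since $\mu_r<c+1$), contradicting $c\in I$; hence $\la_r=c$ and $\la$ has a part equal to $c$. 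Once this small point is supplied, $\varphi_{\la/\mu}(0)=1$ and the rest of your argument is correct.
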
 

The {\em rim} of a partition $\la$ is the set of boxes $[r,c]$ of its
Young diagram such that box $[r+1,c+1]$ lies outside the diagram of
$\la$. If we choose $k > |\la|$ in Theorem \ref{mirrorC}, then we have
$\cC(\la)=\emptyset$ and can thus deduce the following result.

\begin{cor}
\label{corS2}
For any partition $\la$, we have
\[
\sum_{\al\geq 0} 2^{\#\al}\,s_{\la-\al}(x) = \sum_\mu 2^{n(\la/\mu)}
\,s_{\mu}(x),
\]
where the first sum is over all compositions $\al$ and the second
over all partitions $\mu\subset\la$ such that $\la/\mu$ is contained 
in the rim of $\la$, and $n(\la/\mu)$ equals the number of 
edge-connected components of $\la/\mu$.
\end{cor}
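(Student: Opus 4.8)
The plan is to deduce Corollary \ref{corS2} from Theorem \ref{mirrorC} by specializing the parameter $k$. First I would choose $k > |\la|$, so that every $k$-strict partition we encounter is automatically strict (all parts exceed $k$ only vacuously), and observe that for such $k$ the condition defining $\cC(\la)$, namely $\la_i+\la_j > 2k+j-i$, can never hold since $\la_i+\la_j \leq 2|\la| < 2k$. Hence $\cC(\la)=\emptyset$, and the left side of \eqref{minuseq} becomes $\sum_{\al\geq 0} 2^{\#\al}\, W^\emptyset_{\la-\al} = \sum_{\al\geq 0} 2^{\#\al}\, W_{\la-\al}$ with the raising operator $\prod_{i<j}(1-R_{ij})$, which is exactly the type A Giambelli operator of \eqref{giambelliA}. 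Applying the ring homomorphism $\phi: A_{-1}\to\Gamma$ of \S \ref{oddsendsCgen} (equivalently, using that for $k$ large the ring $B^{(k)}$ surjects onto $\Gamma$ sending $w_p\mapsto q_p(x)$), $W_{\la-\al}$ maps to $Q_{\la-\al}(x)$, and since we are in the regime where everything in sight is a strict partition the composition leading to Schur $S$-functions applies: $W_\la \mapsto s_\la(x)$ under the map $B^{(k)}\to\Lambda$, $w_p\mapsto h_p(x)$.

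Next I would unwind the right side. For $k>|\la|$ the relation $\mu\rsa\la$ must be identified explicitly. Recall $\mu\rsa\la$ means that, for $p\geq\max(\la_1+1,\ell(\la)+2k)$, we have $\la\to(p+r,\mu)$ where $r=|\la|-|\mu|$. The hard part of the proof is precisely this combinatorial translation: I need to show that when $k>|\la|$, the relation $\lambda\to\nu$ (with $\nu=(p+r,\mu)$ having a huge first part) forces the vertical-strip/horizontal-strip conditions (1) and (2) of \S \ref{pierirulegen} to degenerate, because all of $\la$ lies in the first $k$ columns, so column indices $\geq k+1$ never occur in $\la$. Tracking conditions (1) and (2): removing a vertical strip from the first $k$ columns of $\la$ and then adding a horizontal strip, with the $k$-relatedness bookkeeping, should collapse to the statement that $\mu\subset\la$, that $\la/\mu$ occupies boxes $[\be_i,i],\ldots,[\al_i,i]$ each $k$-related to a box of $\nu\ssm\la$ in the (single) new top row, and that these constraints amount exactly to $\la/\mu$ being contained in the rim of $\la$. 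The exponent $N(\la,(p+r,\mu))$, which counts connected components of the set $\A$ of boxes of $\nu\ssm\la$ in columns $\geq k+1$ not meeting column $k+1$, should then match the number of edge-connected components of $\la/\mu$: the components of $\A$ lie in a single row (the new row $1$ of $\nu$) and are in bijection, via $k$-relatedness, with the components of $\la/\mu$, none touching column $k+1$ since $p+r$ is large so $\nu$ extends well past it.

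Having made this identification, the right side $\sum_{\mu\rsa\la} 2^{n(\la/\mu)}\, W_\mu$ maps under $W_\mu\mapsto s_\mu(x)$ to $\sum_\mu 2^{n(\la/\mu)}\, s_\mu(x)$ with the sum over $\mu\subset\la$ with $\la/\mu$ in the rim of $\la$ and $n(\la/\mu)$ the number of edge-connected components, which is the right side of the Corollary. The one subtlety I would be careful about is the definition of "edge-connected": in \S \ref{pierirulegen}, components of $\A$ are defined by sharing \emph{at least a vertex}, whereas the Corollary speaks of \emph{edge-connected} components of $\la/\mu$; since a rim does not contain two boxes sharing only a vertex that are not also edge-connected through the strip — actually because $\la/\mu$ sits in the rim, two rim boxes sharing a vertex but no edge belong to different "staircase" pieces — I would verify that for subsets of the rim the vertex-connectivity and edge-connectivity notions give the same component count, or else simply note that the image components $\A$ translate, box by box via $k$-relatedness, to the rim boxes of $\la/\mu$ in a way that carries vertex-adjacency in $\A$ (a single row) to edge-adjacency in $\la/\mu$. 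This matching of connectivity is the genuinely delicate point; everything else is a clean specialization.
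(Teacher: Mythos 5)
Your plan coincides with the paper's: Corollary \ref{corS2} is deduced by taking $k > |\la|$ in Theorem \ref{mirrorC}, so that $\cC(\la)=\emptyset$ and $W_\la^{\emptyset} = \prod_{i<j}(1-R_{ij})\,w_\la$; the paper states exactly this and leaves the combinatorial translation to the reader. One small cleanup on the algebra: the appeal to $\phi\colon A_{-1}\to\Gamma$ is a red herring (that map concerns $V_\la$ and Schur $Q$-functions), but you recover correctly by observing that since the relations $I^{(k)}$ live in degree $>2k>2|\la|$, the identity of Theorem \ref{mirrorC} already holds in $\Z[w_1,w_2,\ldots]\cong A$, whence one may apply $w_p\mapsto h_p(x)$ to land in $\Lambda$ with $W_\la\mapsto s_\la(x)$.

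The ``subtlety'' you flag is real, but both of your proposed resolutions are off. Vertex- and edge-connectivity do \emph{not} agree on subsets of the rim: with $\la=(2,1)$ and $\mu=(1)$ the set $\la/\mu=\{[1,2],[2,1]\}$ lies in the rim and is vertex-connected but has two edge-connected components (and indeed $n(\la/\mu)=2$). Also, $\A$ is not in box-by-box correspondence with $\la/\mu$; by the Lemma in \S\ref{tableaux}, $\A$ is the \emph{complement} of the set $R$ of row-zero boxes that are $k'$-related to $\la/\mu$. The correct statement is that, when $k>|\la|$, the map $[r,c]\mapsto [0,\,2k+1-c+r]$ is a bijection from the rim of $\la$ onto a consecutive block of row-zero columns beyond column $k$, carrying edge-adjacency of rim boxes to column-adjacency in row zero (diagonal rim boxes get sent to columns two apart). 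Hence the edge-connected components of $\la/\mu$ correspond to the maximal intervals of $R$, and the components of $\A$ not touching column $k+1$ are exactly the gaps between those intervals together with the unbounded tail to the right --- there are precisely as many of these as there are intervals of $R$. This also accounts for $n(\la/\la)=0$: when $\mu=\la$ the set $R$ is empty and $\A$ is one component containing column $k+1$. With that correction, your deduction of the corollary is sound and matches the paper's route.
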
 

The {\em shifted diagram} of a strict partition $\la$, denoted
$\Sh(\la)$, is obtained from the usual Young diagram by shifting the
$i$-th row $(i-1)$ squares to the right, for every $i>1$. Moreover,
when $\mu$ is a strict partition with $\mu\subset\la$, we set
$\Sh(\la/\mu)=\Sh(\la)\ssm\Sh(\mu)$. We introduce this notation to
emphasize the similarity between Corollary \ref{corS2} and the next
result, which follows by setting $k=0$ in Theorem \ref{mirrorC}.

\begin{cor}
\label{corS3}
For any strict partition $\la$, we have
\[
\sum_{\al\geq 0} 2^{\#\al}\,Q_{\la-\al}(x) = \sum_\mu 
2^{n(\la/\mu)}\,Q_{\mu}(x),
\]
where the first sum is over all compositions $\al$ and the second over
all strict partitions $\mu\subset\la$ such that $\Sh(\la/\mu)$ is
contained in the rim of $\Sh(\la)$, and $n(\la/\mu)$ equals the number
of edge-connected components of $\Sh(\la/\mu)$.
\end{cor}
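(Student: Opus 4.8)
The plan is to specialize Theorem \ref{mirrorC} to the case $k=0$ and then transport the resulting identity from the ring $B^{(0)}$ to the ring $\Gamma$ of Schur $Q$-functions via the isomorphism of \S \ref{oddsendsCgen} sending $w_p$ to $q_p(x)$. Recall that a $0$-strict partition is the same thing as a strict partition, and that for a strict partition $\la$ of length $\ell$ one has $\la_i+\la_j > j-i$ whenever $i<j\leq\ell$, so $\cC(\la) = \{(i,j) : i<j\leq\ell\}$ and the operator $R^{\cC(\la)}$ agrees, on any index of length at most $\ell$, with the raising operator $\prod_{i<j}\frac{1-R_{ij}}{1+R_{ij}}$ appearing in the Giambelli formula for Schur $Q$-functions. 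Hence the left-hand side of (\ref{minuseq}) is sent to $\sum_{\al\geq 0}2^{\#\al}\,Q_{\la-\al}(x)$, using the description of the functions $Q_\nu(x)$ for arbitrary integer sequences $\nu$ recalled in \S \ref{oddsendsCgen}, while every $W_\mu$ on the right-hand side is sent to $Q_\mu(x)$, since each $\mu$ with $\mu\rsa\la$ is a strict partition.

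It then remains to re-express the index set $\{\mu : \mu\rsa\la\}$ and the integers $n(\la/\mu)$ for $k=0$ in terms of the shifted diagram $\Sh(\la)$. Since $k=0$, conditions (1) and (2) defining the relation $\la\to\nu$ are vacuous, so $\la\to\nu$ holds precisely when $\nu\supset\la$ and $\nu/\la$ is a horizontal strip. Writing $\nu = (p+r,\mu)$ for $p$ large and comparing $\nu$ with $\la$ row by row, the conditions $\nu\supset\la$ and $\nu/\la$ a horizontal strip reduce to the chain $\la_i\geq\mu_i\geq\la_{i+1}$ for every $i$. On the other hand, a box of $\Sh(\la)$ in row $r$ lies in the rim of $\Sh(\la)$ exactly when it is one of the last $\la_r-\la_{r+1}$ boxes of that row, whence $\Sh(\la/\mu)$ — whose $r$-th row consists of the boxes in positions $\mu_r+1,\ldots,\la_r$ — is contained in the rim of $\Sh(\la)$ if and only if $\mu_r\geq\la_{r+1}$ for all $r$. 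Together with $\mu\subset\la$ this is the displayed inequality chain, so $\mu\rsa\la$ (with $k=0$) is equivalent to $\mu$ being a strict partition $\mu\subset\la$ with $\Sh(\la/\mu)$ contained in the rim of $\Sh(\la)$.

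The final and most delicate point is to match $n(\la/\mu) = N(\la,(p+r,\mu))$ — the number of connected components of $(p+r,\mu)/\la$ that avoid the first column — with the number of edge-connected components of $\Sh(\la/\mu)$. The key is that, for $\mu\rsa\la$, the skew shape $(p+r,\mu)/\la$ is a descending staircase: its row $i+1$ occupies the columns $\la_{i+1}+1,\ldots,\mu_i$, that is, exactly the portion of the $i$-th rim step of $\la$ complementary to $\la/\mu$, and the unique component meeting the first column is the chain that descends to row $\ell+1$. Using the inequalities $\la_i\geq\mu_i\geq\la_{i+1}$ one checks that the remaining components are in bijection with the edge-connected components of $\Sh(\la/\mu)$; here the reconciliation between ``vertex-connected in the staircase picture'' and ``edge-connected in the shifted picture'' is the step requiring care, and it is carried out exactly as in the analogous verification for Corollary \ref{corS2}. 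Assembling the three steps yields the asserted identity.
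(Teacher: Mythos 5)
Your proposal is correct and follows the same route as the paper, which proves the corollary simply by specializing Theorem~\ref{mirrorC} to $k=0$ under the isomorphism $B^{(0)}\to\Gamma$; you have merely supplied the details (identifying $R^{\cC(\la)}$ with the Schur $Q$ raising operator on indices of length $\leq\ell(\la)$, translating $\mu\rsa\la$ into the inclusion of $\Sh(\la/\mu)$ in the rim of $\Sh(\la)$, and matching $N(\la,(p+r,\mu))$ with the number of edge-connected components of $\Sh(\la/\mu)$) that the paper leaves implicit. The only soft spot is the phrase that the last combinatorial identification is ``carried out exactly as in the analogous verification for Corollary~\ref{corS2}''---the paper performs no such explicit verification for either corollary, and the shifted bookkeeping (where components of $(p+r,\mu)/\la$ fuse when $\mu_j=\la_j$, while rows of $\Sh(\la/\mu)$ are edge-adjacent when $\mu_j=\la_{j+1}$) does require its own short check---but the resulting count does come out equal, so the argument is sound.
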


\subsection{}
\label{recsec}
We next show how the raising operator formalism we have 
developed may be used to obtain a recursion formula for the basis 
elements $W_\la$, expressed in terms of the length of $\la$. 
This question came up naturally during our work on Giambelli formulas 
for the {\em quantum} cohomology
ring of isotropic Grassmannians \cite[\S 1.3]{BKT3}.

Throughout this subsection $\la$ is a partition of length $\ell$ and
$p\geq \la_1$ is an integer. For completeness, we begin with the kind
of recursion we have in mind for the elements $U_{(p,\la)}$ of \S
\ref{typeA}.  We claim that for any partition $\la$ and $p\geq \la_1$,
we have
\begin{equation}
\label{Urec}
U_{(p,\la)} = \sum_{r,\mu}(-1)^r \, u_{p+r} \, U_\mu
\end{equation}
where the sum is over integers $r\geq 0$ and partitions $\mu$ 
obtained from $\la$ by removing a vertical strip with $r$
boxes. Indeed,
Applying the definition (\ref{giambelliA}) gives 
\[
U_{(p,\la)} = \prod_{j>i>1}(1-R_{ij})\prod_{j>1}(1-R_{1j})\, 
u_{(p,\la)} =\sum_{r,\nu}(-1)^r \, u_{p+r} \, U_\nu
\]
where the sum is over integers $r\geq 0$ and compositions 
$\nu$ obtained from $\la$ by removing $r$ boxes, no two 
in the same row. If $\nu$ is not a partition, then we must have
$\nu_{j+1} = \nu_j+1$ for some $j$; hence $U_\nu=0$ by 
Lemma \ref{commuteA}. The result follows.

The analogue of equation (\ref{Urec}) for the $W_{(p,\la)}$ when $p$
and $\la$ are arbitrary appears complicated. The next theorem
gives a top row recursion for sufficiently large $p$. 

\begin{thm}
\label{recurseC}
For any $k$-strict partition $\la$ and $p \geq \max(\la_1+1,\ell(\la)+2k)$,
we have
\begin{equation}
\label{toprecurse}
W_{(p,\la)} = \sum_{r,\mu}(-1)^r \, 2^{n(\la/\mu)} w_{p+r} \,
W_\mu, 
\end{equation} 
where the sum is over $r\geq 0$ and $\mu\rsa\la$
with $|\mu|=|\la|-r$.
\end{thm}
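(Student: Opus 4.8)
The plan is to mimic the proof of Theorem~\ref{mirrorC} (the $k$-strict mirror identity), of which this recursion is essentially a corollary once we expand the Giambelli raising operator along the top row. First I would apply the definition (\ref{giambelliCgen}): since $p \geq \max(\la_1+1, \ell(\la)+2k)$, the pair $(1,j)$ lies outside $\cC((p,\la))$ for every $j$ (the first row is so long that no inequality $\la_1' + \la_j' > 2k + j - 1$ with $\la_1' = p$ can be violated in the relevant way --- more precisely one checks $\cC((p,\la)) = \cC(\la)$ after the index shift), so that
\[
W_{(p,\la)} = \prod_{1<i<j}\frac{1-R_{ij}}{(1+R_{ij})^{\epsilon_{ij}}}\,\prod_{j>1}(1-R_{1j})\,w_{(p,\la)},
\]
where the inner product over $i,j>1$ is exactly $R^{\cC(\la)}$ acting on rows $2,3,\dots$. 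Expanding $\prod_{j>1}(1-R_{1j})$ gives
\[
W_{(p,\la)} = \sum_{r,\gamma}(-1)^r\, w_{p+r}\, W^{\cC(\la)}_{\la-\gamma},
\]
summed over $r\geq 0$ and compositions $\gamma$ with $|\gamma|=r$ and $\gamma_i\in\{0,1\}$ (removing at most one box from each row below the first).

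Next I would reorganize this sum by the partition $\mu$ underlying $\la-\gamma$. The terms $W^{\cC(\la)}_{\la-\gamma}$ with $\la-\gamma$ not a partition, or a partition $\mu$ with $\la/\mu$ not forming a $k$-horizontal strip, are exactly the ones that vanish or cancel in pairs under Lemma~\ref{commuteAC}; this is the same cancellation bookkeeping carried out in Example~\ref{ex0} and, in the general $k$ case, in the substitution-forest argument of \cite{BKT2} underlying (\ref{toshow}). The surviving terms are indexed by $\mu\rsa\la$, and the multiplicity with which $W_\mu = W^{\cC(\mu)}_\mu$ appears, after all the $D$-set modifications via (\ref{mitosis}) and cancellations via Lemma~\ref{commuteAC}, is precisely $2^{n(\la/\mu)}$ by the very definition of $n(\la/\mu)$ in terms of the Pieri coefficients (\ref{pprod}). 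To make this rigorous without re-running the forest argument, I would instead argue exactly as in Theorem~\ref{mirrorC}: compare the two expansions of $w_p\cdot W_\la$ --- the ``top-row'' expansion above against the Pieri rule (\ref{pprod}) --- and peel off the leading terms $w_{p+r}W_\mu$ (resp.\ $w_{p+r}W^{\cC(\la)}_{\la-\gamma}$) using the $B^{(k)}$-analogue of Lemma~\ref{prodlemmaHL} together with Proposition~\ref{basicpropC} in place of Proposition~\ref{idHL}. Since $p$ is large, $w_{p+r}$ times a polynomial of bounded degree has a nonzero ``leading'' contribution, and injectivity of the relevant map forces the coefficients to match term by term.

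Concretely, the induction on $r$ runs as follows. For $r=0$ the claim $W_{(p,\la)} = w_p W_\la + (\text{lower top-row terms})$ is immediate. Assuming the identity has been verified for all indices $>r$, subtract those known terms from both the top-row expansion and from (\ref{pprod}); what remains is an equation of the form $\sum_{|\gamma|=r}(-1)^r w_{p+r} W^{\cC(\la)}_{\la-\gamma} = \sum_{|\mu|=|\la|-r,\,\mu\rsa\la} 2^{n(\la/\mu)} w_{p+r} W_\mu$ plus strictly-higher-$r$ tails that we again absorb. Applying the leading-term extraction (Lemma~\ref{prodlemmaHL} analogue) strips the common factor $w_{p+r}$, and then Proposition~\ref{basicpropC} lets us reattach the top row $w_p$ to recover (\ref{toprecurse}) at level $r$.

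The main obstacle is the precise matching of multiplicities: one must be certain that after the $(1-R_{1j})$-expansion, the terms $W^{\cC(\la)}_{\la-\gamma}$ collapse onto the partitions $\mu$ with $\mu\rsa\la$ with \emph{exactly} the exponent $2^{n(\la/\mu)}$ and with no leftover sign or stray term. This is not a routine computation because, unlike in type A (where Lemma~\ref{commuteA} alone suffices, as in (\ref{Urec})), here the $D$-sets genuinely change and the combinatorics is governed by the outside-rim mechanism of \cite{BKT2}. The cleanest way around it --- and the route I would take --- is not to redo the cancellation directly but to \emph{reduce} to the already-established mirror identity (\ref{minuseq}) of Theorem~\ref{mirrorC}: multiply (\ref{minuseq}) by nothing and instead observe that the top-row expansion of $W_{(p,\la)}$ is $\sum_{r,\gamma}(-1)^r w_{p+r} W^{\cC(\la)}_{\la-\gamma}$, group by $|\gamma|=r$, and invoke the graded pieces of (\ref{minuseq}) (which say $\sum_{|\gamma|=r} 2^{\#\gamma} W^{\cC(\la)}_{\la-\gamma}$ reorganizes into $\sum_{|\mu|=|\la|-r,\mu\rsa\la} 2^{n(\la/\mu)} W_\mu$, together with the fact that the $\{0,1\}$-restriction on $\gamma$ versus arbitrary $\al\geq 0$ accounts precisely for the change $2^{\#\al}\to 1$ by the same box-counting as in Example~\ref{ex0}). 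Once that dictionary is in place, attaching $w_{p+r}$ and the sign $(-1)^r$ is immediate, and the proof is complete.
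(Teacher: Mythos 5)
There is a genuine error at the very first step. You claim that for $p\geq\max(\la_1+1,\ell(\la)+2k)$ the pairs $(1,j)$ lie \emph{outside} $\cC((p,\la))$, so that the first-row factors of the Giambelli operator are just $(1-R_{1j})$ and the expansion runs over $\{0,1\}$-vectors $\gamma$. This is backwards. For $2\leq j\leq\ell(\la)+1$ one has $(p,\la)_1+(p,\la)_j = p+\la_{j-1}\geq p+1\geq \ell(\la)+2k+1\geq 2k+j$, so $p+\la_{j-1}>2k+j-1$ and every pair $(1,j)$ with $2\leq j\leq\ell(\la)+1$ lies \emph{inside} $\cC((p,\la))$. (The rest of $\cC((p,\la))$ is the index-shift $\ov{\cC}$ of $\cC(\la)$, since $(i,j)\in\cC((p,\la))$ for $i\geq 2$ iff $(i-1,j-1)\in\cC(\la)$.) Consequently the first-row factors are $\frac{1-R_{1j}}{1+R_{1j}} = 1-2R_{1j}+2R_{1j}^2-\cdots$, not $1-R_{1j}$, and the correct expansion is
$W_{(p,\la)} = \sum_{\al\geq 0}(-1)^{|\al|}2^{\#\al}\,w_{p+|\al|}\,W^{\cC(\la)}_{\la-\al}$
over \emph{all} compositions $\al$ of length at most $\ell(\la)$, with the weight $2^{\#\al}$ appearing directly. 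At this point the recursion is simply the mirror identity (\ref{minuseq}) read degree by degree; no further cancellation or leading-term extraction is needed.

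Your closing attempt to repair the discrepancy — asserting that the $\{0,1\}$-restricted unweighted sum $\sum_{\gamma}W^{\cC(\la)}_{\la-\gamma}$ agrees with the unrestricted weighted sum $\sum_{\al}2^{\#\al}W^{\cC(\la)}_{\la-\al}$ by ``the box-counting of Example~\ref{ex0}'' — does not hold, and Example~\ref{ex0} does not say this. For instance, with $\ell(\la)=1$, $\la=(a)$, and $r=2$, the $\{0,1\}$-restricted sum at degree $r$ is empty, while the weighted sum is $2\,w_{a-2}\neq 0$; and indeed $W_{(p,a)} = w_pw_a-2w_{p+1}w_{a-1}+2w_{p+2}w_{a-2}-\cdots$, which is not $w_pw_a-w_{p+1}w_{a-1}$ in $B^{(k)}$. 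The box-counting in Example~\ref{ex0} arises from expanding $(1+R_{i,\ell+1})$ factors after \emph{changing} a $D$-set along the last column of the Pieri product; it is a different mechanism and plays no role in the top-row expansion of $W_{(p,\la)}$, where the $2^{\#\al}$ weight comes out of the geometric series for $\frac{1-R_{1j}}{1+R_{1j}}$. The entire induction-on-$r$/leading-term machinery you invoke (the $B^{(k)}$ analogue of Lemma~\ref{prodlemmaHL}, Proposition~\ref{basicpropC}) is what is needed to \emph{prove} the mirror identity, but once Theorem~\ref{mirrorC} is available, the recursion follows in two lines from the corrected first-row expansion.
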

\begin{proof}
The Giambelli formula for $W_{(p,\la)}$ implies that
\[
W_{(p,\la)} =  R^{\ov{\cC}} \, \prod_{j=2}^{\ell+1} 
\frac{1-R_{1j}}{1+R_{1j}} \,w_{(p,\la)}
= \sum_\al (-1)^{|\al|} 2^{\#\al} \, w_{p+|\al|} W^{\cC(\la)}_{\la-\al}
\]
where $\ov{\cC}$ denotes the image of $\cC(\la)$ under the map which
sends $(i,j)$ to $(i+1,j+1)$. We obtain
\[
W_{(p,\la)} = \sum_{r\geq 0} 
(-1)^r w_{p+r} \sum_{|\al|=r} 2^{\#\al} \, W^{\cC(\la)}_{\la-\al}
\]
and then apply the mirror identity (\ref{minuseq}) to finish the proof.
\end{proof}

The reader should compare the stable Pieri rule (\ref{pprod}) with the
recursion formula (\ref{toprecurse}). This is illustrated in the next
example.

\begin{example}
For $k=2$ and $\la=(7,4,2,1)$, we have 
\begin{align*}
w_7\cdot W_{4,2,1} 
&= W_{7,4,2,1} + 2\,W_{8,3,2,1} + 2\,W_{8,4,2} + 2\,W_{9,2,2,1} + 
 2\,W_{9,3,1,1} + 4\,W_{9,3,2} \\
& \quad + 2\,W_{10,2,1,1} + 4\,W_{10,2,2} + 4\,W_{10,3,1}
              + 4\,W_{11,2,1} + 2\,W_{11,3} + 2\,W_{12,2}\,,
\end{align*}
and hence
\begin{align*}
W_{7,4,2,1} 
&= w_7\,W_{4,2,1} - w_8\,(2\,W_{3,2,1} + 2\,W_{4,2}) +
w_9\,(2\,W_{2,2,1} +  2\,W_{3,1,1} + 4\,W_{3,2}) \\
& \quad - w_{10}\,(2\,W_{2,1,1} + 4\,W_{2,2} + 4\,W_{3,1})
      + w_{11}\,(4\,W_{2,1} + 2\,w_3) - 2\,w_{12}\,w_2\,.
\end{align*}
\end{example}

Setting $k=0$ in Theorem
\ref{recurseC} produces the following recursion formula for Schur
$Q$-functions.

\begin{cor}
For any strict partition $\la$ and $p > \la_1$, we have
\begin{equation}
\label{toprecurseQ}
Q_{(p,\la)}(x) = \sum_{r,\mu}(-1)^r \, 2^{n(\la/\mu)} q_{p+r}(x) \, Q_\mu(x)
\end{equation}
where the sum is over $r\geq 0$ and strict partitions $\mu\subset\la$ 
such that $\la/\mu$ is a horizontal $r$-strip, and $n(\la/\mu)$ equals
the number of connected components of $\la/\mu$.
\end{cor}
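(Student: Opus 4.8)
The plan is to obtain the corollary as the specialization $k=0$ of Theorem \ref{recurseC}, so the only real work is to match the general statement to the classical language of strict partitions and Schur $Q$-functions. First I would recall from \S \ref{oddsendsCgen} that the ring homomorphism $B^{(0)}\to\Gamma$ sends $w_p$ to $q_p(x)$ and $W_\la$ to $Q_\la(x)$ for every strict partition $\la$, and kills $W_\la$ for non-strict $\la$; when $k=0$ every $k$-strict partition is simply a strict partition, so Theorem \ref{recurseC} applies verbatim to all strict $\la$. Applying this homomorphism to \eqref{toprecurse} immediately yields a formula of the shape
\[
Q_{(p,\la)}(x) = \sum_{r,\mu}(-1)^r\,2^{n(\la/\mu)}\,q_{p+r}(x)\,Q_\mu(x),
\]
with the sum over $r\geq 0$ and strict partitions $\mu$ with $\mu\rsa\la$ and $|\mu|=|\la|-r$; the hypothesis $p\geq\max(\la_1+1,\ell(\la)+2k)$ reads $p>\la_1$ when $k=0$ (note $\ell(\la)+0\leq\la_1$ for a nonempty strict $\la$, and the case $\la=\emptyset$ is trivial), which is exactly the stated range.

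The remaining step is to translate the relation $\mu\rsa\la$ and the exponent $n(\la/\mu)$ into the promised description. By definition, $\mu\rsa\la$ means $\la\to(p'+r,\mu)$ for $p'$ large, and one unwinds the Pieri relation $\la\to\nu$ from \S \ref{pierirulegen} in the case $k=0$: there are no first-$k$ columns, so conditions (1)--(2) are vacuous, and $\la\to\nu$ just says $\nu\supset\la$ with $\nu/\la$ a horizontal strip. Setting $\nu=(p'+r,\mu)$ with $p'$ huge forces $\mu\subset\la$ and $\la/\mu$ to be a horizontal $r$-strip (the top row $p'+r$ of $\nu$ contains all $|\la|-|\mu|+p'=r+p'$ boxes added beyond $\mu$, and horizontality of $\nu/\la$ together with $p'\gg 0$ is exactly the condition that $\la/\mu$ be a horizontal strip inside $\la$). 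Likewise $n(\la/\mu)=N(\la,(p'+r,\mu))$, and from the $k=0$ analysis in Example \ref{ex0} the number $N(\la,\nu)$ counts the connected components of $\nu/\la$ not meeting the first column; the huge first row $p'+r$ of $\nu$ lies in the first column but is disjoint from the part of $\nu/\la$ sitting in rows $2,3,\dots$, which is a translate of $\la/\mu$, so $n(\la/\mu)$ equals the number of connected components of $\la/\mu$. This identification can also be read off directly from Corollary \ref{corS3}: there $n(\la/\mu)$ is the number of edge-connected components of $\Sh(\la/\mu)$, and for a horizontal strip $\la/\mu$ this agrees with the number of connected components of $\la/\mu$ itself.

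Assembling these identifications gives precisely \eqref{toprecurseQ}. I expect no serious obstacle here; the one point requiring a little care is the unwinding of the relation $\mu\rsa\la$ and of $n(\la/\mu)$ in the $k=0$ case, i.e.\ checking that ``$\la\to(\text{big},\mu)$'' collapses to ``$\la/\mu$ a horizontal strip'' and that the big top row does not alter the component count. Since all of this was already spelled out when $k=0$ in Example \ref{ex0} and recorded in the combinatorial language of Corollary \ref{corS3}, the proof is short.
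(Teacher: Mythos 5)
Your approach is exactly the paper's: the corollary is stated there as the $k=0$ specialization of Theorem~\ref{recurseC}, with no further argument given, and your job of unwinding the hypothesis $p\geq\max(\la_1+1,\ell(\la)+2k)$ (correctly reduced to $p>\la_1$ using $\ell(\la)\leq\la_1$ for strict $\la$) and the relation $\mu\rsa\la$ in the $k=0$ case is the right thing to do.

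The one wobbly spot is your first justification that $n(\la/\mu)$ equals the number of connected components of $\la/\mu$. You claim that the first row of $\nu/\la$ (with $\nu=(p'+r,\mu)$) ``lies in the first column but is disjoint from the part of $\nu/\la$ sitting in rows $2,3,\dots$, which is a translate of $\la/\mu$.'' All three clauses are off: row~1 of $\nu/\la$ occupies columns $\la_1+1,\dots,p'+r$, so it does \emph{not} meet column~1; it \emph{can} be vertex-connected to row~2 of $\nu/\la$ (this happens exactly when $\mu_1=\la_1$); and rows $2,3,\dots$ of $\nu/\la$, which in row $j+1$ occupy columns $\la_{j+1}+1,\dots,\mu_j$, are a \emph{complementary} shape to $\la/\mu$ (whose row $i$ occupies columns $\mu_i+1,\dots,\la_i$), not a translate of it. So that argument does not establish the component count. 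Your fallback via Corollary~\ref{corS3} is the sound route: one checks that for a horizontal strip $\la/\mu$ between strict partitions, rows $i$ and $i+1$ are vertex-connected in $\la/\mu$ precisely when $\mu_i=\la_{i+1}$, which is also exactly when the corresponding rows of $\Sh(\la/\mu)$ share an edge, so edge-components of $\Sh(\la/\mu)$ and vertex-components of $\la/\mu$ agree. With that substitution the proof is complete and matches the paper's intent.
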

\noin
Observe that (\ref{toprecurseQ}) is a generalization of Schur's 
identity
\[
Q_{a,b}(x) = q_a(x)\,q_b(x) - 2\,q_{a+1}(x)\,q_{b-1}(x) +
2\,q_{a+2}(x)\,q_{b-2}(x) -\cdots
\]
for any integers $a$, $b$ with $a>b\geq 0$.

It would interesting to prove a top row recursion formula for the
$W_\la$ polynomials analogous to (\ref{toprecurse}) in the general case,
as well as for the $V_\la$ polynomials (in the Hall-Littlewood
theory).  Note that both of these recursions would have to interpolate
between formula (\ref{Urec}), which arises when $k$ is sufficiently
large for the $W_\la$ and when $t=0$ for the $V_\la$, and formula
(\ref{toprecurseQ}), which arises when $k=0$ or $t=-1$, respectively.

\section{Reduction formulas and tableaux}
\label{red&tab}

\subsection{}
\label{tabHL}
The mirror identity (\ref{mirroreqHL}) may be applied in the theory of
Hall-Littlewood functions $Q_\la(x\,;t)$ of \S \ref{oddsendsHL} to
obtain a {\em reduction formula}; what is being reduced is the number
of $x$ variables used in the argument of $Q_\la(x\,;t)$.  Let
$\tilde{x}=(x_2,x_3,\ldots)$ and observe that $q_p(x\,;t) = \sum_{i=0}^p
q_i(x_1\,;t)\,q_{p-i}(\tilde{x}\,;t)$.  Therefore, for any integer
sequence $\la$, we obtain
\begin{equation}
\label{gieq}
q_\la(x\,;t) = \sum_{\al\geq 0} q_{\al}(x_1\,;t)\,q_{\la-\al}(\tilde{x}\,;t)
= \sum_{\al\geq 0} x_1^{|\al|}\,(1-t)^{\#\al}\,q_{\la-\al}(\tilde{x}\,;t)
\end{equation}
summed over all compositions $\al$. If $R$ denotes any raising operator,
we have
\begin{equation*}
R\, q_\la(x\,;t) = q_{R\la}(x\,;t) =
\sum_{\al\geq 0} q_{\al}(x_1\,;t)\,q_{R\la-\al}(\tilde{x}\,;t) 
= \sum_{\al\geq 0} q_{\al}(x_1\,;t)\,R\,q_{\la-\al}(\tilde{x}\,;t).
\end{equation*}
Applying the Giambelli formula to (\ref{gieq}), we thus deduce that
for any partition $\la$, we have
\begin{equation}
\label{midequHL}
Q_\la(x\,;t) = \sum_{\al\geq 0}
x_1^{|\al|}(1-t)^{\#\al}Q_{\la-\al}(\tilde{x}\,;t) = \sum_{p=0}^\infty
x_1^p \sum_{|\al|=p}(1-t)^{\#\al}Q_{\la-\al}(\tilde{x}\,;t)
\end{equation}
and hence, using (\ref{mirroreqHL}), the reduction formula
\begin{equation}
\label{reductionHL}
Q_\la(x\,;t) = \sum_{p=0}^\infty x_1^p 
\sum_{{\mu\subset\la}\atop{|\mu|=|\la|-p}}  \varphi_{\la/\mu}(t) \, 
Q_\mu(\tilde{x}\,;t)
\end{equation}
with the second sum over partitions $\mu\subset\la$ such that $\la/\mu$ is
a horizontal $p$-strip. Repeated application of the reduction equation
(\ref{reductionHL}) results in the {\em tableau formula} of 
\cite[III.(5.11)]{M} for the Hall-Littlewood functions.

%
%

There is an alternative approach to the proof of (\ref{reductionHL})
which applies standard symmetric function theory, as in e.g.\
\cite[III.5]{M}.  If $x'=(x'_1,x'_2,\ldots)$ is a second set of
commuting variables, then the equation
\[
Q_\la(x,x'\,;t) = \sum_\mu Q_{\la/\mu}(x\,;t) \, Q_{\mu}(x'\,;t)
\]
summed over partitions $\mu\subset\la$ may be used to define the 
{\em skew Hall-Littlewood functions} $Q_{\la/\mu}(x\,;t)$. In particular,
this gives
\[
Q_\la(x_1,\tilde{x}\,;t) = \sum_\mu
Q_{\la/\mu}(x_1\,;t)\,Q_\mu(\tilde{x}\,;t). 
\]
According to \cite[III.(5.14)]{M}, we have
\begin{equation}
\label{maceq}
Q_{\la/\mu}(x_1\,;t) = \varphi_{\la/\mu}(t) \, x_1^{|\la-\mu|} 
\end{equation}
and hence (\ref{reductionHL}) is established. By comparing with
(\ref{midequHL}), we can apply this reasoning to prove the mirror
identity (\ref{mirroreqHL}) for the Hall-Littlewood functions.  Note
however that the proof of (\ref{maceq}) uses the inner product
of \cite[III.4]{M}, and the point here is that this is not needed in
order to establish Theorem \ref{mirrorHL} (which of course does not
involve the variables $x$). The raising operator approach will be
exploited further in the next subsections when we study theta
polynomials.

\subsection{}
\label{theta1}
Let $y=(y_1,\ldots,y_k)$ and consider the theta polynomials
$\ti_r(x\,;y)$ and $\Ti_\la(x\,;y)$ defined in \S \ref{oddsendsCgen},
so that $\Ti_\la = R^\la\, \ti_\la$ for any $k$-strict partition
$\la$. We compute that
\[
\sum_{r=0}^{\infty}\ti_r(x\,;y)t^r = \frac{1+x_1t}{1-x_1t}\,
\prod_{i=2}^{\infty}\frac{1+x_it}{1-x_it} \,\prod_{j=1}^k
(1+y_jt) = \sum_{i=0}^{\infty}2^{\#i}\,x_1^i\,\sum_{r=0}^{\infty}
\ti_r(\tilde{x}\,;y)
\]
and therefore, for any $k$-strict partition $\la$, 
\begin{equation}
\label{stepto}
\ti_\la(x\,;y) = \sum_{\al\geq0}
x_1^{|\al|}\,2^{\#\al}\,\ti_{\la-\al}(\tilde{x}\,;y)
\end{equation}
summed over all compositions $\al$. Applying the raising operator 
$R^\la$ to both sides of (\ref{stepto}) produces 
\[
\Ti_\la(x\,;y) = \sum_{\al\geq 0} x_1^{|\al|}\,2^{\#\al}\,
\Ti^{\cC(\la)}_{\la-\al}(\tilde{x}\,;y) = 
\sum_{p=0}^\infty \,x_1^p \,\sum_{|\al|=p}2^{\#\al}\,
\Ti^{\cC(\la)}_{\la-\al}(\tilde{x}\,;y),
\]
where $\Ti^{\cC(\la)}_{\la-\al} = R^\la\,\ti_{\la-\al}$ by
definition. We now use the mirror identity (\ref{minuseq}) to deduce
the next result.
\begin{thm}
\label{reductthm}
For any $k$-strict partition $\la$, we have the reduction formula 
\begin{equation}
\label{reductTH}
\Ti_\la(x\,;y) = \sum_{p=0}^\infty \,x_1^p\,
\sum_{{\mu\rsa\la}\atop{|\mu|=|\la|-p}}
2^{n(\la/\mu)} \, \Ti_\mu(\tilde{x}\,;y).
\end{equation}
\end{thm}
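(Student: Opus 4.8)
The plan is to combine the expansion of the generating function for the theta polynomials obtained in equation~(\ref{stepto}) with the mirror identity of Theorem~\ref{mirrorC}. The computation in \S\ref{theta1} already reduces the claim to a purely algebraic statement: after applying the raising operator $R^\la$ to (\ref{stepto}), one obtains
\[
\Ti_\la(x\,;y) = \sum_{p=0}^\infty \,x_1^p \,\sum_{|\al|=p}2^{\#\al}\,
\Ti^{\cC(\la)}_{\la-\al}(\tilde{x}\,;y),
\]
where $\Ti^{\cC(\la)}_{\la-\al} = R^\la\,\ti_{\la-\al}$ is the image of $W^{\cC(\la)}_{\la-\al}$ under the ring isomorphism $B^{(k)}\to\Gamma^{(k)}$ sending $w_p$ to $\ti_p$ (in the variables $\tilde x\,;y$). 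So the only remaining task is to replace the inner sum $\sum_{|\al|=p}2^{\#\al}\,\Ti^{\cC(\la)}_{\la-\al}$ by $\sum_{\mu\rsa\la,\ |\mu|=|\la|-p}2^{n(\la/\mu)}\,\Ti_\mu$.

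First I would transport Theorem~\ref{mirrorC} through the isomorphism $B^{(k)}\to\Gamma^{(k)}$: applying the isomorphism to both sides of (\ref{minuseq}) yields
\[
\sum_{\al\geq 0} 2^{\#\al}\,\Ti^{\cC(\la)}_{\la-\al}(\tilde{x}\,;y)
= \sum_{\mu\rsa\la} 2^{n(\la/\mu)}\,\Ti_\mu(\tilde{x}\,;y)
\]
in $\Gamma^{(k)}$. Next I would observe that this identity is graded: on the left, the summand indexed by $\al$ has degree $|\la|-|\al|$, and on the right the summand indexed by $\mu$ with $\mu\rsa\la$ has degree $|\mu| = |\la|-r$ for the corresponding $r$. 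Since $\Gamma^{(k)}$ is a graded ring and $\ti_p$ has degree $p$, I can extract the homogeneous component of degree $|\la|-p$ on each side, obtaining
\[
\sum_{|\al|=p} 2^{\#\al}\,\Ti^{\cC(\la)}_{\la-\al}(\tilde{x}\,;y)
= \sum_{{\mu\rsa\la}\atop{|\mu|=|\la|-p}} 2^{n(\la/\mu)}\,\Ti_\mu(\tilde{x}\,;y)
\]
for every $p\geq 0$. Substituting this into the displayed expansion above immediately gives (\ref{reductTH}).

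The main point requiring care — though it is not really an obstacle, since all the machinery is in place — is checking the compatibility of gradings and the fact that the mirror identity (\ref{minuseq}) is genuinely an identity in $B^{(k)}$ (i.e.\ valid modulo $I^{(k)}$), so that it survives the passage to $\Gamma^{(k)}$. Theorem~\ref{mirrorC} is stated exactly in that form, and its proof via Proposition~\ref{basicpropC} works in $B^{(k)}$, so no extra work is needed here. One should also note the slight subtlety that on the left-hand side of (\ref{minuseq}) the summand $W^{\cC(\la)}_{\la-\al}$ vanishes unless all parts of $\la-\al$ are non-negative, so the sum over compositions $\al$ with $|\al|=p$ is effectively finite; this is the same finiteness already used in \S\ref{theta1}, and it matches the finiteness of the sum over $\mu$ with $\mu\rsa\la$ and $|\mu|=|\la|-p$. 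With these remarks the proof is complete; I would write it as a two- or three-line argument invoking Theorem~\ref{mirrorC}, the isomorphism $B^{(k)}\to\Gamma^{(k)}$, and the expansion (\ref{stepto}).
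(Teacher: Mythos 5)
Your proposal is correct and follows essentially the same route as the paper: expand $\ti_r(x;y)$ in powers of $x_1$ to get (\ref{stepto}), apply $R^\la$, and invoke the mirror identity (\ref{minuseq}) to replace the inner sum over compositions by the sum over $k$-horizontal strips. Your grading argument for isolating the coefficient of $x_1^p$ is sound and in fact makes explicit a step the paper leaves implicit; note also that the proof of Theorem~\ref{mirrorC} already establishes (\ref{minuseq}) degree by degree, which gives the same conclusion directly.
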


\subsection{}
\label{tableaux}
In this subsection we will apply the reduction formula
(\ref{reductTH}) to obtain a tableau description of the theta
polynomials $\Ti_\la$, where the tableaux in question are fillings of
the Young diagram of $\la$.  
We say that boxes $[r,c]$ and $[r',c']$ are {\em $k'$-related} if
$|c-k-\frac{1}{2}|+r = |c'-k-\frac{1}{2}|+r'$ (we think of $k'$ as
being equal to $k-\frac{1}{2})$. For example, the two grey boxes in 
the diagram of Figure \ref{kprrelated} are $k'$-related.
\begin{figure}
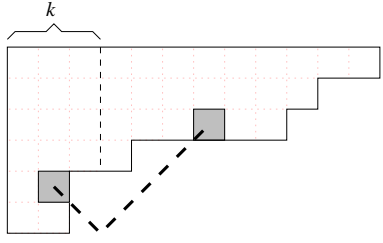

\centering
\pic{0.65}{kprshift} 
\caption{Two $k'$-related boxes in a Young diagram}
\label{kprrelated}
\end{figure}
We call box $[r,c]$ a {\em left box} if $c \leq k$ and a {\em
right box} if $c>k$.

If $\mu\subset\la$ are two $k$-strict
partitions such that $\la/\mu$ is a $k$-horizontal strip,
we define $\la_0=\mu_0=\infty$ and agree that the
diagrams of $\la$ and $\mu$ include all boxes $[0,c]$ in row zero.
%
We let $R$ (respectively $\A$) denote the set of
right boxes of $\mu$ (including boxes in row zero) which are bottom
boxes of $\la$ in their column and are (respectively are not)
$k'$-related to a left box of $\la/\mu$. 

\begin{lemma}
A pair $\mu\subset\la$ of $k$-strict partitions forms a $k$-horizontal
strip $\la/\mu$ if and only if (i) $\la/\mu$ is contained in the rim
of $\la$, and the right boxes of $\la/\mu$ form a horizontal strip;
(ii) no two boxes in $R$ are $k'$-related; and (iii) if two boxes of
$\la/\mu$ lie in the same column, then they are $k'$-related to
exactly two boxes of $R$, which both lie in the same row. The integer
$n(\la/\mu)$ is equal to the number of connected components of $\A$
which do not have a box in column $k+1$.
\end{lemma}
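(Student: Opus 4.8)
The plan is to unwind the definition of a $k$-horizontal strip $\la/\mu$, which says that $\la \to (p+r,\mu)$ for $p$ large, and to compare the Pieri conditions (1), (2), and the definition of $N(\la,\mu)$ from \S\ref{pierirulegen} directly against the three conditions (i), (ii), (iii) in the statement. First I would fix $p \geq \max(\la_1+1,\ell(\la)+2k)$ and set $\nu=(p+r,\mu)$, so that $\la \to \nu$ in the sense of \S\ref{pierirulegen}; the key observation is that row zero of the configuration plays the role of the new top row of $\nu$ (this is why we extended $\la$ and $\mu$ by $\la_0=\mu_0=\infty$), so that a box of $\nu\ssm\la$ in the new first row corresponds to our row-zero boxes, and a box of $\la$ that is $k$-related to a box of $\nu\ssm\la$ corresponds, after the reflection $[r,c]\mapsto$ its mirror, to a $k'$-relation with a box in $\la/\mu$. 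Here the bookkeeping is that $k$-relatedness ($c+c'=2k+2+r-r'$) between a box in $\la$ and a box in $\nu\ssm\la$ becomes, after translating indices to account for $p$ being huge, precisely the $k'$-relatedness ($|c-k-\tfrac12|+r=|c'-k-\tfrac12|+r'$) used in this subsection.

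Next I would translate the Pieri recipe for $\lambda\to\nu$ term by term. Since $\nu\supset\la$ is forced to have $\nu/\la$ a horizontal strip lying in the rim (this is exactly what the stable Pieri rule produces for large $p$, and it is built into the relation $\mu\rsa\la$), condition (i) is the requirement that $\la/\mu$ be contained in the rim of $\la$ together with the statement that the right boxes of $\la/\mu$ form a horizontal strip — the latter is what survives of ``$\nu/\la$ is a horizontal strip in columns $>k$'' after moving back to the $\la/\mu$ picture. For the first $k$ columns, the removal of a vertical strip from $\la$ followed by adding a horizontal strip to get $\nu$ is exactly the assertion that $\mu$ is obtained from $\la$ by removing a vertical strip in the first $k$ columns (here we are going the other direction, $\mu\subset\la$, so removing the vertical strip becomes part of forming $\la/\mu$). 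Conditions (1) and (2) of the Pieri rule, which govern how the left bottom-boxes $[\al_i,i]$ or $[\be_i,i]$ must be $k$-related to boxes of $\nu\ssm\la$, become exactly conditions (ii) and (iii): the set $R$ of right bottom-boxes of $\la$ (in the $\mu$-shape, including row zero) that are $k'$-related to a left box of $\la/\mu$ is precisely the image under the mirror of the boxes of $\nu\ssm\la$ that are $k$-related to some $[\al_i,i]$ or $[\be_i,i]$; condition (ii) (no two boxes of $R$ are $k'$-related) is the ``at most one box'' clause in (1), and condition (iii) is the ``exactly one box, all in the same row'' clause in (2). Finally the set $\A$ of right bottom-boxes not accounted for this way is the mirror image of the set $\A$ appearing in the definition of $N(\lambda,\mu)$, and the parenthetical exclusion ``which do not have a box in column $k+1$'' matches the analogous exclusion there (``which do not have a box in column $k+1$''), giving $n(\la/\mu)=N(\la,\nu)$ as defined.

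The step I expect to be the main obstacle is the careful verification that the index translation caused by $p$ being large is compatible with all the relational conditions simultaneously — that is, checking that every instance of $k$-relatedness between a box of $\la$ and a box of $\nu\ssm\la$ in the Pieri rule really does correspond, under the reflection and the shift by (something like) $p-\la_1$ in the row/column indices, to a $k'$-relatedness within the $\la/\mu$-diagram, and conversely, with no spurious relations introduced or genuine ones lost. In particular one must confirm that a box of $\nu\ssm\la$ that lands in the (old) first column of $\nu$ is the one that came from column $k+1$ of $\la/\mu$ under the mirror, so that the two exclusions genuinely match; this is where the choice $\la_0=\mu_0=\infty$ and the convention that row zero is entirely filled do their work, and it requires a short but attentive diagram-chase. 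Once that dictionary is set up, everything else is a matter of rereading the definitions of ``vertical strip'', ``horizontal strip'', and the clauses (1),(2) in \S\ref{pierirulegen} and transcribing them; I would present the proof as: (a) state the translation $\nu=(p+r,\mu)$ and the mirror map with its effect on $k$-relations; (b) deduce (i) from the rim/horizontal-strip part of $\lambda\to\nu$; (c) deduce the vertical-strip statement and (ii),(iii) from clauses (1),(2); (d) identify the two sets $\A$ and conclude $n(\la/\mu)=N(\la,\nu)$; (e) observe each implication is reversible, giving the ``if and only if''.
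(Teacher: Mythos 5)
Your overall approach matches the paper's: fix $p\ge\max(\la_1+1,\ell(\la)+2k)$, set $\nu=(p+r,\mu)$, observe that $\mu\rsa\la$ if and only if $\la\to\nu$, and transcribe the Pieri conditions (1), (2) and the count $N(\la,\nu)$ into conditions (i)--(iii) and $n(\la/\mu)$. The paper's proof is exactly this, with the single explicit observation that a box of $\nu\ssm\la$ corresponds to a box of $\mu$ which is a bottom box of $\la$ in its column; everything else is called ``an easy translation of the definitions.''

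There are, however, two concrete inaccuracies in your plan. First, the assertion that ``$\nu\supset\la$ is forced'' is false: $\la\ssm\nu$ is precisely the vertical strip that the Pieri relation $\la\to\nu$ removes from the first $k$ columns of $\la$. For example, with $k=3$, $\la=(3,3,3)$, $\mu=(3,2)$, so that $r=4$ and $\nu=(p+4,3,2)$, one has $[3,3]\in\la\ssm\nu$, yet $\la\to\nu$ holds, hence $\mu\rsa\la$. You do bring in the vertical strip a few sentences later, but if you derive (i) from ``$\nu\supset\la$'' the transcription breaks. Second, the passage from the Pieri picture to the $\la/\mu$ picture is not a ``mirror'' or ``reflection,'' and it involves no column shift, and nothing like a shift by $p-\la_1$: the dictionary is exactly the row shift $[i,c]\mapsto[i-1,c]$ taking boxes of $\nu\ssm\la$ to boxes of $\mu$ (bottom boxes of $\la$ in their columns, with the row-zero convention absorbing the new top row), together with the identity map on the boxes of $\la$ in the first $k$ columns appearing in (1) and (2). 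Under this row shift, $k$-relatedness $c+c'=2k+2+r-r'$ between $[r,c]\in\la$ ($c\le k$) and $[r',c']\in\nu\ssm\la$ ($c'>k$) becomes precisely $k'$-relatedness between $[r,c]$ and $[r'-1,c']$, since $(k+\tfrac12-c)+r=(c'-k-\tfrac12)+(r'-1)$ rearranges to the same equation. Once this is pinned down, the ``attentive diagram-chase'' you anticipate mostly evaporates, and (i)--(iii) together with $n(\la/\mu)=N(\la,\nu)$ are immediate rewritings of the definitions, as the paper asserts.
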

\begin{proof}
We have $\mu\rsa\la$ if and only if $\la\to(p+r,\mu)$ for
any $p>|\la|+2k$, where $r=|\la-\mu|$. Observe that a box of
$(p+r,\mu)\ssm\la$ corresponds to a box of $\mu$ which is a bottom box
of $\la$ in its column. The fact that $\mu\rsa\la$ is
characterized by conditions (i)--(iii) and that
$n(\la/\mu)=N(\la,(p+r,\mu))$ is computed as claimed is now an easy
translation of the definitions in \S \ref{pierirulegen}.
\end{proof}

Let $\la$ and $\mu$ be any two $k$-strict partitions with $\mu\subset\la$.

\begin{defn}
a) A {\em $k$-tableau} $T$ of shape $\la/\mu$ is a sequence of $k$-strict
partitions
\[
\mu = \la^0\subset\la^1\subset\cdots\subset\la^r=\la
\]
such that $\la^i/\la^{i-1}$ is a $k$-horizontal strip for $1\leq i\leq
r$.  We represent $T$ by a filling of the boxes in $\la/\mu$ with
positive integers which is weakly increasing along each row and down
each column, such that for each $i$, the boxes in $T$ with entry $i$
form the skew diagram $\la^i/\la^{i-1}$. A {\em standard $k$-tableau}
on $\la/\mu$ is a $k$-tableau $T$ of shape $\la/\mu$ such that the
entries $1,2,\ldots,|\la - \mu|$ each appear once in $T$.  For any
$k$-tableau $T$ we define
\[
n(T)=\sum_i n(\la^i/\la^{i-1}) \quad \text{and} \quad
x^T = \prod_i x_i^{m_i}
\] 
where $m_i$ denotes the number of times that $i$ appears in $T$. 

\medskip
\noin b) Let {\bf P} denote the ordered alphabet
$\{1'<2'<\cdots<k'<1<2<\cdots\}$.  The symbols $1',\ldots,k'$ are said
to be {\em marked}, while the rest are {\em unmarked}.  A {\em
$k$-bitableau} $U$ of shape $\la$ is a filling of the boxes in $\la$
with elements of {\bf P} which is weakly increasing along each row and
down each column, such that (i) the marked entries are strictly
increasing along each row, and (ii) the unmarked entries form a
$k$-tableau $T$. We define
\[
n(U)=n(T) \quad \text{and} \quad
(xy)^U= x^T \,\prod_{j=1}^k y_j^{n_j} 
\]
where $n_j$ denotes the number of times that $j'$ appears in $U$.
\end{defn}

\begin{thm}
\label{tableauxthm}
For any $k$-strict partition $\la$, we have 
\begin{equation}
\label{tableauxeq}
\Ti_\la(x\,;y) = \sum_U 2^{n(U)}(xy)^U 
\end{equation}
where the sum is over all $k$-bitableaux $U$ of shape $\la$ 
\end{thm}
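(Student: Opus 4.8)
The plan is to prove Theorem \ref{tableauxthm} by iterating the reduction formula of Theorem \ref{reductthm}, exactly parallel to how the tableau formula for Hall-Littlewood functions follows from (\ref{reductionHL}). First I would separate the marked and unmarked parts of a $k$-bitableau. The generating function identity for $\ti_r(x\,;y)$ factors as $\prod_i \frac{1+x_it}{1-x_it}\cdot\prod_{j=1}^k(1+y_jt)$; the second (finite) product is responsible for the marked entries $1',\ldots,k'$, and since each $(1+y_jt)$ contributes at most one factor of $y_j$, the marked entries in each column are distinct, i.e.\ strictly increasing down columns is automatic only in the sense of "at most once per $y_j$" — the correct bookkeeping is that the marked part of $U$ is a tableau on a subdiagram with strict rows and the $y$-weight is squarefree in each $y_j$-degree. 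So the first step is to isolate the "$\ti$-part": show $\Ti_\la(x\,;y)$ expands over pairs (a filling of a sub-shape by marked symbols satisfying (i)) times a theta polynomial in $x$ alone of the complementary $k$-strict shape. This is a purely formal consequence of the raising-operator definition $\Ti_\la = R^\la\,\ti_\la$ together with the split of the generating series, using that $R^\la$ commutes with the substitutions as in \S\ref{theta1}.

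Second, for the $x$-part I would induct on the number of $x$-variables, applying Theorem \ref{reductthm} repeatedly. Write $\Ti_\la(x_1,x_2,\ldots\,;y) = \sum_p x_1^p \sum_{\mu\rsa\la,\,|\mu|=|\la|-p} 2^{n(\la/\mu)}\,\Ti_\mu(x_2,x_3,\ldots\,;y)$, and expand $\Ti_\mu$ in the remaining variables by the inductive hypothesis. The sequence of $k$-strict partitions produced, $\la = \la^{(1)} \rsa \la^{(2)} \rsa \cdots$ read off from successive applications, is precisely a $k$-tableau of shape $\la$ (reading the chain backwards), the power of $x_i$ records $|\la^{(i)}/\la^{(i+1)}|$, and the accumulated power of $2$ is $\sum_i n(\la^{(i)}/\la^{(i+1)}) = n(T)$. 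Thus $\sum_\al 2^{\#\al}x^\al$-type bookkeeping is replaced by $\sum_T 2^{n(T)} x^T$. One must check that the chains arising this way are exactly the $k$-tableaux as defined, which is immediate from the definition of $\mu\rsa\la$ and the Lemma characterizing $k$-horizontal strips; and that the process terminates, which holds because each step strictly decreases $|\la|$ unless $p=0$ (and $n(\la/\la)=0$, so empty steps contribute nothing and can be dropped). Since the $\ti_r$ are symmetric in the $x_i$, the order in which variables are stripped does not matter, so the total is a well-defined sum over all $k$-tableaux.

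Third, I would reassemble: combining the marked-part expansion from the first step with the unmarked-part expansion from the second gives a sum over pairs (marked filling, $k$-tableau $T$) that fit together into a single weakly-increasing-in-$\mathbf{P}$ filling — and conversely every $k$-bitableau $U$ decomposes uniquely this way. The weight $(xy)^U = x^T\prod_j y_j^{n_j}$ and the exponent $n(U)=n(T)$ match by construction, yielding (\ref{tableauxeq}).

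The main obstacle I expect is the compatibility of the marked and unmarked fillings in the first and third steps: one needs that gluing an arbitrary legal marked filling (strict rows, $\le 1$ of each $y_j$ per column) on top of an arbitrary $k$-tableau produces something that is weakly increasing in the full alphabet $\mathbf{P}$, with no further interaction constraints between the two pieces. This requires knowing that the shape occupied by the marked entries in each column forms an initial segment and that there is no "$k$-strictness obstruction" when passing from the marked sub-shape to $\la$ — i.e.\ that the complementary shape of the unmarked part is genuinely $k$-strict so that $\Ti$ of it is defined. Verifying this is essentially a careful unwinding of which sub-shapes of a $k$-strict $\la$ are themselves $k$-strict and how the generating-series split interacts with $R^\la$; it is the one place where the $k$-strict condition (parts $>k$ distinct) plays an active role rather than a formal one, and where one should be careful that $R^\la = R^{\cC(\la)}$ uses the $\cC$-set of $\la$ throughout the stripping of marked entries, consistent with $\Ti^{\cC(\la)}_{\la-\al}$ in \S\ref{theta1}.
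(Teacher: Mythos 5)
Your overall plan — iterate Theorem~\ref{reductthm} to strip the $x$-variables one at a time, producing a chain of $k$-strict partitions that encodes a $k$-tableau, then handle the remaining marked entries — is the same as the paper's proof. Your parenthetical remark about reading the chain backwards correctly handles the fact that stripping $x_1$ first labels the outermost strip with the smallest entry; the paper avoids this by working with $x^{(m)}=(x_1,\dots,x_m)$ and stripping $x_m$ first, which is cosmetically cleaner.

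Where your proposal goes astray is the insistence that ``step~1'' — isolating the marked part — can be done \emph{before} the iteration as a ``purely formal consequence'' of the generating-series split and $R^\la = R^{\cC(\la)}$. If you actually try this, you get
\[
\Ti_\la(x\,;y) = \sum_{\al\geq 0} q_\al(x)\,\Ti^{\cC(\la)}_{\la-\al}(0\,;y),
\]
which is exactly the $y=0$, $x'\!=\!\emptyset$ specialization of~(\ref{compeq}). The inner objects $\Ti^{\cC(\la)}_{\la-\al}(0\,;y)$ carry the denominator set $\cC(\la)$ of the \emph{original} shape; they are not basis elements and do not factor as (marked filling)\,$\times$\,(theta polynomial of the complement). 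Converting them into honest $\Ti_\mu$'s requires precisely the mirror identity (Theorem~\ref{mirrorC}), i.e.\ it \emph{is} the content of the reduction step you list as ``step~2.'' So step~1 cannot be detached from step~2; the marked part must come \emph{last}, as the base case after all $x$-variables are stripped.

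Once reordered this way, the ``main obstacle'' you flag dissolves. After iterating~(\ref{reductTH}) (or~(\ref{mreduct})) $m$ times one is left with $\Ti_\mu(0\,;y)$ for $k$-strict $\mu\subset\la$. By the raising-operator definition, $\Ti_\mu(0\,;y)=R^\mu\,e_\mu(y)$, which \emph{vanishes} unless every part of $\mu$ is $\leq k$, because $e_r(y)=0$ for $r>k$. This is the ``$k$-strictness obstruction'' you worry about — it is handled automatically by vanishing, not by a separate compatibility check. For surviving $\mu$ one has $\cC(\mu)=\emptyset$, so $\Ti_\mu(0\,;y)=\prod_{i<j}(1-R_{ij})\,e_\mu(y)=s_{\mu'}(y)$, a Jacobi–Trudi determinant, and the semistandard expansion of $s_{\mu'}(y)$ supplies the marked entries $1',\dots,k'$. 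The gluing of an arbitrary semistandard filling of $\mu'$ (equivalently, a row-strict, column-weak filling of $\mu$ by marked symbols) onto an arbitrary $k$-tableau of $\la/\mu$ is automatic, since every marked symbol precedes every unmarked one in $\mathbf{P}$; no further interaction constraint arises. With these corrections your argument becomes the paper's proof of Theorem~\ref{tableauxthm}.
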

\begin{proof}
Let $m$ be a positive integer, $x^{(m)} = (x_1,\dots,x_m)$, and
let  $\Ti_\la(x^{(m)}\,;y)$ be the result of substituting
$x_i=0$ for $i>m$ in $\Ti_\la(x\,;y)$. It follows from equation
(\ref{reductTH}) that
\begin{equation}
\label{mreduct}
\Ti_\la(x^{(m)}\,;y) = \sum_{p=0}^\infty \,x_m^p\,
\sum_{{\mu\rsa\la}\atop{|\mu|=|\la|-p}}
2^{n(\la/\mu)} \, \Ti_\mu(x^{(m-1)}\,;y).
\end{equation}
Iterating equation (\ref{mreduct}) $m$ times produces
\[
\Ti_\la(x^{(m)}\,;y) = \sum_{\mu,T} \,2^{n(T)}\,x^T
\Ti_\mu(0\,;y)
\]
where the sum is over all $k$-strict partitions $\mu\subset\la$ and 
$k$-tableau $T$ of shape $\la/\mu$ with no entries greater than $m$, 
and $\Ti_\mu(0\,;y)$ is obtained from $\Ti_\mu(x\,;y)$ by substituting $x_i=0$
for all $i$. The raising operator definition of $\Ti_\mu$ gives
\begin{equation}
\label{Ti0mu}
\Ti_\mu(0\,;y) = R^\mu\, e_\mu(y)
\end{equation}
where $e_\mu = \prod_ie_{\mu_i}(y)$ and $e_r(y)$ denotes the $r$-th 
elementary symmetric polynomial in $y$. Since $e_r(y)=0$ for $r>k$, 
we deduce from (\ref{Ti0mu}) that $\Ti_\mu(0\,;y)=0$ unless 
$\mu$ is contained in the first $k$ columns. But in this case we 
have $\cC(\mu)=\emptyset$ and 
\[
\Ti_\mu(0\,;y) = \prod_{i<j}(1-R_{ij})\, e_\mu(y) = 
s_{\mu'}(y)
\]
where $\mu'$ is the partition conjugate to $\mu$. The combinatorial 
definition of Schur $S$-functions \cite[I.(5.12)]{M} states that
\begin{equation}
\label{seq}
s_{\mu'}(y) = \sum_S y^S
\end{equation}
summed over all semistandard Young tableaux $S$ of shape $\mu'$ with
entries from $1$ to $k$. In the spirit of this article, the identity
(\ref{seq}) may be derived using raising operators, by iterating the
specialization of equation (\ref{reductionHL}) at $t=0$. We conclude
that
\[
\Ti_\la(x^{(m)}\,;y) = \sum_U \,2^{n(U)}\,(xy)^U
\]
summed over all $k$-bitableaux $U$ of shape $\la$ with no entries greater
than $m$. The result follows by letting $m\to\infty$.
\end{proof}

\begin{example} Let $k=1$, $\la=(3,1)$, and consider the alphabet
$\text{\bf P}_{1,2}=\{1'<1<2\}$. There are twelve $k$-bitableaux $T$
of shape $\la$ with entries in $\text{\bf P}_{1,2}$. The bitableau
$T=\dis \begin{array}{l} 1'\,1\,2 \\ 1 \end{array}$ satisfies
$n(T)=3$, the seven bitableaux
\[
\begin{array}{l} 1\,1\,2 \\ 1 \end{array}  \ \
\begin{array}{l} 1\,1\,2 \\ 2 \end{array}  \ \
\begin{array}{l} 1\,2\,2 \\ 1 \end{array}  \ \
\begin{array}{l} 1\,2\,2 \\ 2 \end{array}  \ \
\begin{array}{l} 1'\,1\,2 \\ 2 \end{array}  \ \
\begin{array}{l} 1'\,2\,2 \\ 1 \end{array}  \ \
\begin{array}{l} 1'\,1\,2 \\ 1' \end{array}  
\]
satisfy $n(T)=2$, while the four bitableaux
\[
\begin{array}{l} 1'\,1\,1 \\ 1 \end{array}  \ \
\begin{array}{l} 1'\,2\,2 \\ 2 \end{array}  \ \
\begin{array}{l} 1'\,1\,1 \\ 1' \end{array} \ \
\begin{array}{l} 1'\,2\,2 \\ 1' \end{array} 
\]
satisfy $n(T)=1$. We deduce from Theorem \ref{tableauxthm} that
\begin{align*}
\Ti_{3,1}(x_1,x_2\,; y_1) 
&= (4x_1^3x_2+8x_1^2x_2^2+4x_1x_2^3) 
+ (2x_1^3+8x_1^2x_2+8x_1x_2^2+2x_2^3)\,y_1 \\
&\qquad + (2x_1^2+4x_1x_2+2x_2^2)\,y_1^2 \\
&= Q_{3,1}(x_1,x_2) + 
\left(Q_3(x_1,x_2) + Q_{2,1}(x_1,x_2)\right)\,y_1 + 
Q_2(x_1,x_2)\, y_1^2.
\end{align*}
\end{example}

\subsection{}
\label{Ftableaux}
Notice that (\ref{tableauxeq}) may be rewritten as
\[
\Ti_\la(x\,;y) = \sum_{\mu,T}2^{n(T)}\,x^T\,s_{\mu'}(y)
\]
with the sum over all partitions $\mu\subset\la$ and
$k$-tableau $T$ of shape $\la/\mu$. This motivates the 
following definition.

\begin{defn}
\label{Fdefn}
For $\la$ and $\mu$ any two $k$-strict partitions with $\mu\subset\la$,
let
\[
F_{\la/\mu}^{(k)}(x)=\sum_T 2^{n(T)}\,x^T
\]
where the sum is over all $k$-tableaux $T$ of shape $\la/\mu$.
\end{defn}

\begin{cor}
Let $x=(x_1,x_2,\ldots)$ and $x'=(x'_1,x'_2,\ldots)$ be two sets of 
variables, and let $\la$ be any $k$-strict partition. Then we have
\begin{equation}
\label{mastereq}
\Ti_\la(x,x'\,;y) = \sum_{\mu\subset\la} F^{(k)}_{\la/\mu}(x)\,
\Ti_\mu(x'\,;y)\, ,
\end{equation}
\begin{equation}
\label{master2}
\Ti_\la(x\,;y) = \sum_{\mu\subset\la} F^{(k)}_{\la/\mu}(x)\,
s_{\mu'}(y)\, ,
\end{equation}
and 
\begin{equation}
\label{master3}
F^{(k)}_\la(x,x') = \sum_{\mu\subset\la} F^{(k)}_{\la/\mu}(x)\,
F^{(k)}_{\mu}(x'),
\end{equation}
where the sums are over all $k$-strict partitions $\mu\subset\la$.
\end{cor}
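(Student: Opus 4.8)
The plan is to derive all three identities from~(\ref{mastereq}), which I will establish by iterating the reduction formula of Theorem~\ref{reductthm} in exact parallel with the proof of Theorem~\ref{tableauxthm}; the latter is the special case of~(\ref{mastereq}) in which $x$ consists of a single variable, so the present corollary is essentially its ``block'' version.

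To prove~(\ref{mastereq}) I would first replace $x$ by a finite set $x^{(m)}=(x_1,\dots,x_m)$, working with $\Ti_\la(x^{(m)},x'\,;y)$, the result of setting $x_i=0$ for $i>m$. Since~(\ref{reductTH}) is an identity valid under any specialization of the variables not being peeled off, and $\Ti_\la$ is symmetric in its $x$-variables, I can apply~(\ref{reductTH}) successively to peel off $x_1,\dots,x_m$; after these $m$ steps the left-hand side becomes $\sum_{\mu,T}2^{n(T)}(x^{(m)})^T\,\Ti_\mu(x'\,;y)$, summed over $k$-strict partitions $\mu\subset\la$ and $k$-tableaux $T$ of shape $\la/\mu$ with no entry larger than $m$ --- the same bookkeeping as in the proof of Theorem~\ref{tableauxthm}, carried out for the whole block $x$ rather than one variable at a time. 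Letting $m\to\infty$ and recognizing $\sum_T 2^{n(T)}x^T$ as $F^{(k)}_{\la/\mu}(x)$ gives~(\ref{mastereq}).

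I would then obtain~(\ref{master2}) by setting $x'_i=0$ for all $i$ in~(\ref{mastereq}): the computation $\Ti_\mu(0\,;y)=R^\mu e_\mu(y)$ from the proof of Theorem~\ref{tableauxthm} shows that $\Ti_\mu(0\,;y)$ equals $s_{\mu'}(y)$ when $\mu$ is contained in the first $k$ columns and vanishes otherwise, and since $s_{\mu'}(y)$ likewise vanishes once $\ell(\mu')>k$, in fact $\Ti_\mu(0\,;y)=s_{\mu'}(y)$ for every $k$-strict $\mu$. For~(\ref{master3}) I would instead set $y_j=0$ for all $j$ in~(\ref{mastereq}): by Theorem~\ref{tableauxthm} the specialization $y=0$ kills every $k$-bitableau carrying a marked symbol, so $\Ti_\nu(z\,;0)=\sum_T 2^{n(T)}z^T=F^{(k)}_\nu(z)$, and applying this with $z=(x,x')$ on the left and $z=x'$ on the right of~(\ref{mastereq}) produces~(\ref{master3}). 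As an independent check I would also verify~(\ref{master3}) combinatorially, by splitting a $k$-tableau of shape $\la$ written in the ordered alphabet $x'_1<x'_2<\cdots<x_1<x_2<\cdots$ at its first $x$-entry into a $k$-tableau of some intermediate shape $\mu$ in the $x'$-symbols and a $k$-tableau of shape $\la/\mu$ in the $x$-symbols, using additivity of $n(\cdot)$ along the defining chain.

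I do not expect a genuine obstacle: the content lies in the iteration for~(\ref{mastereq}), and the only points needing a little care are the passage $m\to\infty$ --- harmless, since each monomial receives contributions from only finitely many tableaux, exactly as in Theorem~\ref{tableauxthm} --- and the observation that the intermediate partitions produced by the reduction formula are automatically $k$-strict, so that $F^{(k)}_{\la/\mu}(x)$ is defined and the sums in~(\ref{mastereq})--(\ref{master3}) may be restricted to $k$-strict $\mu\subset\la$.
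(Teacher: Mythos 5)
Your proposal is correct and mirrors the paper's own proof: both establish (\ref{mastereq}) by iterating the reduction formula (\ref{reductTH})/(\ref{mreduct}) through $m$ variables and then letting $m\to\infty$, and both obtain (\ref{master2}) and (\ref{master3}) by specializing $x'=0$ and $y=0$, respectively. Your additional observations — that $\Ti_\mu(0\,;y)=s_{\mu'}(y)$ even when $\mu_1>k$ since both sides vanish, and the combinatorial splitting check for (\ref{master3}) — are correct and make explicit what the paper leaves tacit.
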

\begin{proof}
Let $m$ be a positive integer, $x^{(m)} = (x_1,\dots,x_m)$, and let
$F^{(k)}_{\la/\mu}(x^{(m)})$ (respectively $\Ti_\la(x^{(m)}\,;y)$) be
the result of substituting $x_i=0$ for $i>m$ in $F^{(k)}_{\la/\mu}(x)$
(respectively $\Ti_\la(x\,;y)$). Applying equation (\ref{mreduct}) 
as in the proof of Theorem \ref{tableauxthm} gives
\[
\Ti_\la(x^{(m)},x'\,;y) = \sum_{\mu\subset\la} \,F_{\la/\mu}^{(k)}(x^{(m)})\,
\Ti_\mu(x'\,;y).
\]
Now let $m\to\infty$ to obtain (\ref{mastereq}). Equations (\ref{master2})
and (\ref{master3}) are deduced from (\ref{mastereq}) by substituting 
$x'=0$ and $y=0$, respectively.
\end{proof}

The polynomials $\Ti_\mu(x'\,;y)$ for $\mu$ a $k$-strict partition 
form a free $\Z$-basis for the ring 
$\Gamma^{(k)}(x'\,; y)$ of theta polynomials in $x'$ and $y$. The 
identity (\ref{mastereq}) states that $F^{(k)}_{\la/\mu}(x)$ is 
the coefficient of $\Ti_\mu(x'\,;y)$ when $\Ti_\la(x,x'\,;y)$ is 
expanded in this basis. We deduce that $F^{(k)}_{\la/\mu}(x)$ is 
a {\em symmetric function} in the variables $x$. The identity
(\ref{master3}) may be further generalized as follows.

\begin{cor}
For any two $k$-strict partitions $\la$ and $\mu$ with $\mu\subset\la$,
we have
\begin{equation}
\label{FFF}
F^{(k)}_{\la/\mu}(x,x') = \sum_{\nu} F^{(k)}_{\la/\nu}(x)\,
F^{(k)}_{\nu/\mu}(x'),
\end{equation}
where the sum is over all $k$-strict partitions $\nu$
with $\mu\subset\nu\subset\la$.
\end{cor}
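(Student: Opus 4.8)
The plan is to deduce (\ref{FFF}) from the master identity (\ref{mastereq}) by introducing a third set of variables and expanding a single theta polynomial in two different ways, in complete parallel with how (\ref{master3}) was obtained from (\ref{mastereq}). Let $x''=(x''_1,x''_2,\ldots)$ be a further set of commuting variables, and write $(x,x',x'')$ for the concatenated variable set. I will compute the expansion of $\Ti_\la(x,x',x''\,;y)$ in the $\Z$-basis $\{\Ti_\mu(x''\,;y)\}$ of $\Gamma^{(k)}(x''\,;y)$ in two ways and compare.

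First I would apply (\ref{mastereq}) with the variables grouped as $(x,x')$ and $(x'')$, taking the first slot of (\ref{mastereq}) to be $z=(x,x')$ and the second slot to be $x''$. This gives
\[
\Ti_\la(x,x',x''\,;y) = \sum_{\mu\subset\la} F^{(k)}_{\la/\mu}(x,x')\,\Ti_\mu(x''\,;y),
\]
the sum over $k$-strict $\mu\subset\la$. Next I would instead apply (\ref{mastereq}) with the variables grouped as $(x)$ and $(x',x'')$, and then apply (\ref{mastereq}) once more to each factor $\Ti_\nu(x',x''\,;y)$ with the split $(x')$, $(x'')$, obtaining
\[
\Ti_\la(x,x',x''\,;y) = \sum_{\mu\subset\nu\subset\la} F^{(k)}_{\la/\nu}(x)\,F^{(k)}_{\nu/\mu}(x')\,\Ti_\mu(x''\,;y),
\]
the sum over $k$-strict partitions with $\mu\subset\nu\subset\la$. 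Both manipulations are legitimate because (\ref{mastereq}) is valid for any second set of variables, hence its right-hand side may be fed a concatenated set; and by the preceding Corollary the objects $F^{(k)}_{\la/\mu}(x,x')$, $F^{(k)}_{\la/\nu}(x)$, $F^{(k)}_{\nu/\mu}(x')$ are genuine symmetric functions in their respective variable sets, so these are honest identities in the (completed) ring of symmetric functions in $x$, $x'$ tensored with $\Gamma^{(k)}(x''\,;y)$.

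The final step is to equate coefficients. Since the $\Ti_\mu(x''\,;y)$ for $\mu$ a $k$-strict partition form a free $\Z$-basis of $\Gamma^{(k)}(x''\,;y)$, they remain linearly independent after extending scalars to the ring of symmetric functions in $x$ and $x'$; comparing the coefficient of $\Ti_\mu(x''\,;y)$ in the two displayed expansions yields exactly
\[
F^{(k)}_{\la/\mu}(x,x') = \sum_{\mu\subset\nu\subset\la} F^{(k)}_{\la/\nu}(x)\,F^{(k)}_{\nu/\mu}(x'),
\]
which is (\ref{FFF}). I expect the only delicate point to be this extension-of-scalars/linear-independence argument together with the bookkeeping of the three variable sets; everything else is formal. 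For completeness I would also note the purely combinatorial alternative: order the alphabet so that the $x'$-letters precede the $x$-letters; then every $k$-tableau $T$ of shape $\la/\mu$ in the combined alphabet splits uniquely at the intermediate $k$-strict partition $\nu$ reached once all $x'$-entries have been placed, into a $k$-tableau of shape $\nu/\mu$ in the $x'$-letters and one of shape $\la/\nu$ in the $x$-letters, with $n(\cdot)$ additive and the monomial weight multiplicative across the split; summing over $T$ then gives (\ref{FFF}) straight from Definition \ref{Fdefn}.
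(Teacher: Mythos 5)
Your main argument is essentially the paper's own proof: introduce a third set of variables (you call it $x''$, the paper calls it $z$), expand $\Ti_\la(x,x',x''\,;y)$ in the $\{\Ti_\mu(x''\,;y)\}$ basis in two ways using (\ref{mastereq}), and equate coefficients. Your closing remark about the direct combinatorial derivation — splitting a $k$-tableau of shape $\la/\mu$ over the concatenated alphabet at the intermediate $k$-strict shape $\nu$, with $n(\cdot)$ additive and the monomial weight multiplicative — is a correct and slightly more elementary alternative that does not pass through $\Ti_\la$ at all, but the paper does not take that route.
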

\begin{proof}
Let $z=(z_1,z_2,\ldots)$ be a third set of variables. We have
\begin{gather*}
\sum_{\mu\subset\la} F^{(k)}_{\la/\mu}(x,x')\, \Ti_{\mu}(z\,;y) =
\Ti_\la(x,x',z\,;y) = 
\sum_\nu F^{(k)}_{\la/\nu}(x)\,\Ti_\nu(x',z\,;y) \\
= \sum_{\mu,\nu}
F^{(k)}_{\la/\nu}(x)\,F^{(k)}_{\nu/\mu}(x')\,\Ti_{\mu}(z\,;y).
\end{gather*}
We now equate the coefficients of $\Ti_{\mu}(z\,;y)$ at either end
of the above equalities.
\end{proof}

In the next section, we will show that the $k$-strict partitions
$\nu$ which contribute positively to the sum in equation (\ref{FFF})
correspond to the reduced factorizations of a certain {\em skew
element} in the hyperoctahedral group.

\begin{example}
\label{4ex}
a) Suppose that $k=0$ or, more generally, that $\mu_i \geq
\min(k,\la_i)$ for all $i$. Then Definition \ref{Fdefn} becomes the
tableau based definition of skew Schur $Q$-functions
\cite[III.(8.16)]{M}, hence $F^{(k)}_{\la/\mu}(x)=Q_{\la/\mu}(x)$.

\medskip
\noin
b) Suppose that $\cC(\la)=\emptyset$, so in particular $\la_i>k$ 
implies that $i=1$. One then checks that $n(\la/\mu)$
is equal to the number of edge-connected components of $\la/\mu$. 
For any partition $\mu\subset\la$, Worley \cite[\S 2.7]{W} has shown 
that 
\[
\sum_T 2^{n(T)}\, x^T = S_{\la/\mu}(x),
\]
where the symmetric function $S_{\la/\mu}(x)$ satisfies
\[
S_{\la/\mu}(x) = \det(q_{\la_i-\mu_j+j-i}(x))_{i,j}.
\]
We therefore have $F^{(k)}_{\la/\mu}(x) =  S_{\la/\mu}(x)$ and
\[
\Ti_\la(x\,;y) = \sum_{\mu\subset\la}S_{\la/\mu}(x)\,s_{\mu'}(y),
\]
in agreement with \cite[\S 5.5]{BKT2}.
\end{example}

\medskip
\noin c) It is clear from Definition \ref{Fdefn} that in general, the
skew function $F^{(k)}_{\la/\mu}(x)$ depends on both $\la$ and $\mu$,
and not only on the difference $\la\ssm\mu$. For instance, we have
$F^{(1)}_{(3,2)/(3)}(x) = 0$, while $F^{(1)}_{(r,2)/(r)}(x)=Q_2(x)$
for any $r>3$. On the other hand,
$F^{(1)}_{(5,4,1,1)/(4,3)}(x)=F^{(1)}_{(\la,5,4,1,1)/(\la,4,3)}(x)=0$
for any strict partition $\la$ with $\la_\ell>5$. Criteria for the
vanishing of $F^{(k)}_{\la/\mu}(x)$ are given in Proposition
\ref{abprop} and Corollary \ref{bhcor}.

\medskip
\noin 
d) Suppose that there is only one variable $x$. Then we have
\[
F^{(k)}_{\la/\mu}(x) = \begin{cases}
2^{n(\la/\mu)}\, x^{|\la-\mu|} & \text{if $\la/\mu$ is a 
$k$-horizontal strip}, \\
0 & \text{otherwise}.
\end{cases}
\]
The reader should compare this to (\ref{maceq}).

\subsection{}
\label{theta4}

Observe that we have 
\[
\ti_p(x,x';y) = \sum_{i=0}^p q_i(x)\,\ti_{p-i}(x';y)
\]
and therefore, for any $k$-strict partition $\la$,
\[
\ti_\la(x,x';y) = \sum_{\al\geq 0} q_\al(x)\,\ti_{\la-\al}(x';y) 
\]
summed over all compositions $\al\geq 0$. It follows that
\begin{equation}
\label{compeq}
\Ti_\la(x,x';y) = \sum_{\al\geq 0} 
q_\al(x)\,\Ti^{\cC(\la)}_{\la-\al}(x';y).
\end{equation}
For each fixed composition $\al$, the product
$q_\al(x)=\prod_iq_{\al_i}(x)$ is a nonnegative linear combination of
Schur $Q$-functions. In addition, $\Ti^{\cC(\la)}_{\la-\al}(x\,;y)$ is
a $\Z$-linear combination of the $\Ti_\mu(x'\,;y)$ for $k$-strict
partitions $\mu$. By comparing (\ref{compeq}) with (\ref{mastereq}),
we deduce that for every two $k$-strict partitions $\la$, $\mu$ with
$\mu\subset\la$, the function $F^{(k)}_{\la/\mu}(x)$ is a linear
combination of Schur $Q$-functions with integer coefficients. In the
next section, we will prove that these coefficients are {\em
nonnegative} integers.

\section{Stanley symmetric functions and skew elements}
\label{sss}

\subsection{}
\label{ssfns}
Let $B_n$ be the hyperoctahedral group of signed permutations on the
set $\{1,\ldots,n\}$, and $B_{\infty} = \cup_nB_n$. We will adopt the
notation where a bar is written over an entry with a negative sign.
The group $B_{\infty}$ is generated by the simple transpositions
$s_i=(i,i+1)$ for $i>0$, and the sign change $s_0(1)=\ov{1}$. Every
element $w\in B_\infty$ can be expressed as a product of simple
reflections $s_i$; any such expression of minimal length is called a
{\em reduced word} for $w$. The length of $w$, denoted $\ell(w)$, is
the length of any reduced word for $w$. A factorization $w=uv$ in
$B_\infty$ is {\em reduced} if $\ell(w)=\ell(u)+\ell(v)$.

Following \cite{FS} and \cite{FK1, FK2}, we will use the nilCoxeter
algebra $\cB_n$ of the hyperoctahedral group $B_n$ to define type C
Stanley symmetric functions and Schubert polynomials. $\cB_n$ is the
free associative algebra with unity generated by the elements
$u_0,u_1,\ldots,u_{n-1}$ modulo the relations
\[
\begin{array}{rclr}
u_i^2 & = & 0 & i\geq 0\ ; \\
u_iu_j & = & u_ju_i & |i-j|\geq 2\ ; \\
u_iu_{i+1}u_i & = & u_{i+1}u_iu_{i+1} & i>0\ ; \\
u_0u_1u_0u_1 & = & u_1u_0u_1u_0.
\end{array}
\]
For any $w\in B_n$, choose a reduced word $s_{a_1}\cdots s_{a_\ell}$
for $w$ and define $u_w = u_{a_1}\ldots u_{a_\ell}$. Since the last
three relations listed are the Coxeter relations for $B_n$, it is
clear that $u_w$ is well defined, and that the $u_w$ for $w\in B_n$
form a free $\Z$-basis of $\cB_n$.

Let $\omega$ be an indeterminate and define
\[
A_i(\omega) = (1+\omega u_{n-1})(1+\omega u_{n-2})\cdots 
(1+\omega u_i) \ ;
\]
\[
C(\omega) = (1+\omega u_{n-1})\cdots(1+\omega u_1)
(1+2\omega u_0)(1+\omega u_1)\cdots (1+\omega u_{n-1}).
\]
Set $x=(x_1,x_2,\ldots)$ and consider the formal product
$C(x):=C(x_1)C(x_2)\cdots$.  Arguing as in \cite[Prop.\ 4.2]{FK2}, we
see that the relation $C(x_i)C(x_j) = C(x_j)C(x_i)$ holds for all
indices $i$ and $j$.  We deduce that the functions $F_w(x)$ in the
formal power series expansion
\begin{equation}
\label{defF}
C(x) = \sum_{w\in B_n}F_w(x)\, u_w
\end{equation}
are symmetric functions in $x$. The $F_w$ are the {\em type C Stanley 
symmetric functions}, introduced and studied in \cite{BH, FK2, L}.

Let $y=(y_1,y_2,\ldots)$. The Billey-Haiman {\em type C Schubert
polynomials} $\CS_w(x\,;y)$ for $w\in B_n$ are defined by expanding
the formal product
\begin{equation}
\label{defC}
C(x) A_1(y_1)A_2(y_2)\cdots A_{n-1}(y_{n-1})
= \sum_{w\in B_n}\CS_w(x\,;y)\, u_w.
\end{equation}
The above definition is equivalent to the one in \cite{BH}, as is
shown in \cite[\S 7]{FK2}. One checks that $\CS_w$ is stable 
under the natural inclusion of $B_n$ in $B_{n+1}$, and hence 
well defined for $w\in B_\infty$. 
We also deduce the following result from (\ref{defF}) and (\ref{defC}).

\begin{prop}
Let $w\in B_\infty$ and $x'=(x'_1,x'_2,\ldots)$. Then we have
\begin{equation}
\label{Fxx}
F_w(x,x') = \sum_{uv=w} F_u(x)F_v(x')
\end{equation}
and 
\begin{equation}
\label{besteq}
\CS_w(x,x'\,;y) = \sum_{uv=w}F_u(x)\,\CS_v(x'\,;y)
\end{equation} 
where the sums are over all reduced factorizations $uv=w$ in $B_\infty$.
\end{prop}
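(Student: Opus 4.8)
The plan is to obtain both identities by comparing coefficients of the basis elements $u_w$ after expanding the defining products (\ref{defF}) and (\ref{defC}) in the nilCoxeter algebra. Fix $n$ with $w\in B_n$ and work in the completion $\widehat{\cB}_n$ of $\cB_n$ consisting of formal power series in $x$, $x'$, $y$ with coefficients in $\cB_n$; grading by total degree in the variables makes every graded piece a finite $\Z$-combination of the $u_v$, so all the infinite products below converge. I will use the standard multiplication rule in $\cB_n$: for $u,v\in B_n$ one has $u_u u_v = u_w$ if $w=uv$ and $\ell(w)=\ell(u)+\ell(v)$ (i.e. $uv=w$ is a reduced factorization), and $u_u u_v = 0$ otherwise. (Choose reduced words for $u$ and $v$ and concatenate them; the braid relations $u_iu_{i+1}u_i=u_{i+1}u_iu_{i+1}$, $u_0u_1u_0u_1=u_1u_0u_1u_0$ and $u_iu_j=u_ju_i$ rewrite the concatenated word, and a repeated letter --- forcing a factor $u_i^2=0$ --- appears precisely when the concatenation is not reduced.)

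The crucial step is the factorization $C(x,x')=C(x)\,C(x')$ in $\widehat{\cB}_n$, where $C(x,x')$ denotes the product of the factors $C(z)$ over the combined alphabet $x\cup x'$, whose $u_w$-coefficient is $F_w$ evaluated on that alphabet, i.e. $F_w(x,x')$. This follows from the pairwise commutation $C(x_i)C(x_j)=C(x_j)C(x_i)$ recalled after (\ref{defF}), which is valid for any two of the variables, in particular for one from $x$ and one from $x'$: the product defining $C(x,x')$ is therefore independent of the ordering of its factors, so the factors indexed by $x$ may be collected to the left, giving $C(x,x')=\bigl(\prod_i C(x_i)\bigr)\bigl(\prod_j C(x'_j)\bigr)=C(x)C(x')$.

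Granting this, (\ref{Fxx}) is immediate: expanding both sides of $C(x,x')=C(x)C(x')$ via (\ref{defF}),
\[
\sum_w F_w(x,x')\,u_w \;=\; \Bigl(\sum_u F_u(x)\,u_u\Bigr)\Bigl(\sum_v F_v(x')\,u_v\Bigr) \;=\; \sum_{u,v}F_u(x)F_v(x')\,u_u u_v \;=\; \sum_w\Bigl(\sum_{uv=w}F_u(x)F_v(x')\Bigr)u_w,
\]
the last inner sum being over reduced factorizations by the multiplication rule; comparing coefficients of $u_w$ gives (\ref{Fxx}). For (\ref{besteq}) I multiply $C(x,x')=C(x)C(x')$ on the right by $A_1(y_1)A_2(y_2)\cdots A_{n-1}(y_{n-1})$ and use (\ref{defC}):
\[
\sum_w \CS_w(x,x'\,;y)\,u_w \;=\; C(x)\bigl(C(x')A_1(y_1)\cdots A_{n-1}(y_{n-1})\bigr) \;=\; \Bigl(\sum_u F_u(x)\,u_u\Bigr)\Bigl(\sum_v \CS_v(x'\,;y)\,u_v\Bigr),
\]
since the second factor is the expansion (\ref{defC}) of $C(x')A_1(y_1)\cdots A_{n-1}(y_{n-1})$; the same comparison of $u_w$-coefficients yields (\ref{besteq}). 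Finally, stability of $F_w$ and $\CS_w$ under $B_n\hookrightarrow B_{n+1}$ makes the result independent of the chosen $n$, hence valid in $B_\infty$.

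The argument is essentially formal, and I expect no substantive obstacle. The one point that must be handled with care is the reorganization of the infinite product establishing $C(x,x')=C(x)C(x')$ --- it rests entirely on the pairwise commutativity of the $C(z)$, which is the only non-trivial input --- together with keeping the nilCoxeter multiplication rule straight when reading off $u_w$-coefficients.
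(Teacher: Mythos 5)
Your proof is correct and is exactly the argument the paper has in mind: the paper simply says the proposition is ``deduced from (\ref{defF}) and (\ref{defC}),'' and you have supplied the details --- the nilCoxeter multiplication rule, the factorization $C(x,x')=C(x)C(x')$ via pairwise commutativity of the $C(z)$, and comparison of $u_w$-coefficients.
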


\subsection{}
\label{wsec}
We say that an element $w\in B_\infty$ is {\em $k$-Grassmannian} if it
is Grassmannian with respect to the simple reflection $s_k$, i.e., if
$\ell(ws_i)=\ell(w)+1$ for all $i\neq k$. Given any $k$-strict
partition $\la$, we will define a $k$-Grassmannian element $w_\la\in
B_\infty$. If $\cP(k,n)$ denotes the set of $k$-strict
partitions whose Young diagrams fit inside an $(n-k)\times (n+k)$
rectangle, then $w_\la\in B_n$ for any $n$ such that $\la\in
\cP(k,n)$.

The signed permutation $w_{\la}=(w_1,\ldots,w_n)$ has a unique descent
at $k$, that is, $w(i) < w(i+1)$ whenever $i\neq k$ and the first
$k$ entries of $w_\la$ are positive. For $\lambda \in \cP(k,n)$ we let
$\la^1$ be the strict partition formed by the boxes of $\lambda$ in
columns $k+1$ through $k+n$.  The negative entries of $w_{\la}$ are
then given by the parts of $\la^1$.  Consider the shape $\Pi(k,n)$
obtained by attaching an $(n-k) \times k$ rectangle to the left side
of a staircase partition with $n$ rows.  When $n=7$ and $k=3$, this
looks as follows.
\[ \Pi(k,n) \ \ = \ \ \ \raisebox{-36pt}{\pic{.6}{pi}} \]
The diagram of $\la$ can be placed inside $\Pi(k,n)$ so that the
northwest corners of $\la$ and $\Pi(k,n)$ coincide.  The boxes of the
staircase partition which are outside $\lambda$ then fall into
south-west to north-east diagonals.  The first $k$ (respectively, the
last $n-k-\ell(\la^1)$) entries of $w_\la$ are the lengths of the
diagonals which are (respectively, are not) $k$-related to one of the
bottom boxes in the first $k$ columns of $\lambda$.  For example, the
partition $\lambda = (8,5,2,1)\in \cP(3,7)$ results in the element
$w_\la = 147\ov{5}\ov{2}36$.
\[
\lambda \ = \ \ \raisebox{-53pt}{\pic{.6}{ldiag}}
\]
Observe that $w_\la$ is stable under the inclusion of $W_n$ into
$W_{n+1}$, and thus is well defined as an element of $W_\infty$.

Conversely, for any $k$-Grassmannian element $w\in B_n$ there 
exist strict partitions $u$, $\zeta$, $v$ of lengths $k$, $r$, 
and $n-k-r$, respectively, so that 
\[
w=(u_k,\ldots,u_1,
\ov{\zeta}_1,\ldots,\ov{\zeta}_r,v_{n-k-r},\ldots,v_1).
\]
Define $\al_i$ for $1\leq i \leq k$ by
\[
\al_i=u_i+i-k-1+\#\{j\ |\ \zeta_j > u_i\}.
\]
Then $w=w_\la$ for the partition $\la\in\cP(k,n)$ with $\la^1=\zeta$
and such that the lengths of the first $k$ columns of $\la$ are given
by $\al_1,\ldots,\al_k$.

It was proved in \cite[\S 6]{BKT2} that
\begin{equation}
\label{TSeq}
\Ti_\la(x\,;y) = \CS_{w_\la}(x\,;y)
\end{equation}
for any $k$-strict partition $\la$.

\subsection{} 
The following definition can be formulated for any Coxeter group, 
but the name is justified by its application in the case of the finite
classical Weyl groups.

\begin{defn}
An element $w\in B_\infty$ is called {\em skew} if 
there exists a $k$-strict partition $\la$ (for some $k$)
and a reduced factorization $w_\la = ww'$ in $B_\infty$.
\end{defn}

Note that if we have a reduced factorization $w_\la = ww'$ in $B_n$
for some partition $\la\in \cP(k,n)$, then the right factor $w'$ is
$k$-Grassmannian, and therefore equal to $w_\mu$ for some partition
$\mu\in \cP(k,n)$.

\begin{prop}
\label{skewprop}
Suppose that $w$ is a skew element of $B_\infty$, and let $\la$ and $\mu$ be 
$k$-strict partitions such that the factorization $w_\la = ww_\mu$
is reduced. Then we have
$\mu\subset\la$ and $F_w(x) = F^{(k)}_{\la/\mu}(x)$. 
\end{prop}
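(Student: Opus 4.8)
The plan is to exploit the three generalized comultiplication formulas already proved — equation~(\ref{besteq}) for the Billey-Haiman Schubert polynomials, equation~(\ref{TSeq}) identifying $\Ti_\la$ with $\CS_{w_\la}$, and equation~(\ref{mastereq}) expressing $\Ti_\la(x,x'\,;y)$ in the basis of theta polynomials — and to compare coefficients. First I would fix $k$ and the $k$-strict partition $\la$, choose $n$ with $\la\in\cP(k,n)$, and work inside $B_n$. Writing the reduced factorization $w_\la = w\,w'$, the remark following the definition of skew elements shows that $w'$ is $k$-Grassmannian, hence $w' = w_\mu$ for a unique $\mu\in\cP(k,n)$; this is how $\mu$ enters the statement.

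The core computation is to expand $\CS_{w_\la}(x,x'\,;y)$ two ways. On one hand, substituting $w = w_\la$ into~(\ref{besteq}) gives
\[
\CS_{w_\la}(x,x'\,;y) = \sum_{uv = w_\la} F_u(x)\,\CS_v(x'\,;y),
\]
the sum over reduced factorizations in $B_\infty$. In every such factorization the right factor $v$ is again $k$-Grassmannian (it is obtained from the $k$-Grassmannian $w_\la$ by removing a reduced left factor, so $\ell(vs_i)=\ell(v)+1$ for $i\neq k$), hence $v = w_\nu$ for a unique $k$-strict $\nu\in\cP(k,n)$, and by~(\ref{TSeq}) we have $\CS_{w_\nu}(x'\,;y) = \Ti_\nu(x'\,;y)$. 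On the other hand, (\ref{TSeq}) and~(\ref{mastereq}) give
\[
\CS_{w_\la}(x,x'\,;y) = \Ti_\la(x,x'\,;y) = \sum_{\nu\subset\la} F^{(k)}_{\la/\nu}(x)\,\Ti_\nu(x'\,;y),
\]
the sum over $k$-strict $\nu\subset\la$. Since the theta polynomials $\Ti_\nu(x'\,;y)$ over $k$-strict $\nu$ form a free $\Z$-basis of $\Gamma^{(k)}(x'\,;y)$ (as noted after Corollary~(\ref{master3})), and the coefficients $F_u(x)$, $F^{(k)}_{\la/\nu}(x)$ are symmetric functions in the separate variables $x$, I can equate the coefficient of $\Ti_\nu(x'\,;y)$ on the two sides. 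This yields, for each $k$-strict $\nu$,
\[
F^{(k)}_{\la/\nu}(x) = \sum_{u\,:\,u\,w_\nu = w_\la \text{ reduced}} F_u(x),
\]
with the understanding that the left side is $0$ unless $\nu\subset\la$ and the right side is a sum over reduced factorizations $w_\la = u\,w_\nu$ in $B_\infty$. Taking $\nu = \mu$, the factorization $w_\la = w\,w_\mu$ is by hypothesis one of the terms on the right. To conclude $F_w(x) = F^{(k)}_{\la/\mu}(x)$ I need that it is the \emph{only} term: that is, $w$ is the unique left factor $u$ with $u\,w_\mu = w_\la$ reduced. But a reduced factorization $w_\la = u\,w_\mu$ determines $u = w_\la\,w_\mu^{-1}$ uniquely as a group element (the condition $\ell(u)+\ell(w_\mu)=\ell(w_\la)$ just says this product is length-additive), so the sum over $u$ collapses to the single term $F_w(x)$. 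This also forces $\mu\subset\la$, since $F^{(k)}_{\la/\mu}(x)\neq 0$ (it has the term corresponding to the standard filling when $w\neq e$, and equals $1$ when $w=e$), recovering the containment asserted in the proposition.

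The main obstacle is the bookkeeping in the coefficient comparison: one must be careful that the two expansions are over genuinely the same indexing set, i.e.\ that every reduced right factor $v$ of $w_\la$ is of the form $w_\nu$ with $\nu$ a $k$-strict partition satisfying $\nu\subset\la$, and conversely that every $k$-strict $\nu\subset\la$ with $F^{(k)}_{\la/\nu}(x)\neq 0$ arises this way — otherwise the equality of coefficients only gives an inequality of supports. The cleanest route is not to prove this set-theoretic matching directly but simply to equate coefficients of the basis elements $\Ti_\nu(x'\,;y)$ over \emph{all} $k$-strict $\nu$ (allowing zero coefficients on either side), which is legitimate because both sides lie in $\Gamma^{(k)}(x)\otimes\Gamma^{(k)}(x'\,;y)$ and the second tensor factor has the $\Ti_\nu(x'\,;y)$ as a basis; the identity $F^{(k)}_{\la/\nu}(x) = \sum_u F_u(x)$ then holds for every $\nu$, and specializing to $\nu=\mu$ together with the uniqueness of the left factor finishes the argument. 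The only genuinely new input beyond earlier results is the elementary observation that a reduced left factor of a $k$-Grassmannian element, once its right cofactor is fixed, is itself determined — which is immediate from the group structure.
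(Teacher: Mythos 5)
Your argument follows exactly the paper's route: combining (\ref{TSeq}), (\ref{mastereq}), and (\ref{besteq}) to expand $\Ti_\la(x,x'\,;y)$ two ways, identifying the right factors of $w_\la$ with $k$-Grassmannian elements $w_\nu$, and comparing coefficients of the basis elements $\Ti_\nu(x'\,;y)$; your added observation that the left factor is uniquely determined as $w_\la w_\mu^{-1}$ is precisely the "follows immediately" the paper leaves implicit. One small wording slip: to deduce $\mu\subset\la$ you should invoke that $F_w(x)\neq 0$ (which holds for every $w$ since $F_w$ picks up a nonzero term of degree $\ell(w)$ from any reduced word), not that $F^{(k)}_{\la/\mu}(x)\neq 0$ — the latter is only defined once $\mu\subset\la$ is known, so citing it there is circular even though the surrounding logic makes clear you have the right comparison in mind.
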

\begin{proof}
By combining (\ref{TSeq}) with (\ref{mastereq}) and (\ref{besteq}),
we see that 
\begin{equation}
\label{3eq}
\sum_{\mu\subset\la} F^{(k)}_{\la/\mu}(x)\, \Ti_\mu(x'\,;y) =
\Ti_\la(x,x'\,; y) = 
\sum_{uv=w_\la}F_u(x)\,\CS_v(x'\,;y)
\end{equation}
where the second sum is over all reduced factorizations $uv=w_\la$. In
any such factorization, the right factor $v$ is equal to $w_\nu$ for
some $k$-strict partition $\nu$, and therefore $\CS_v(x'\,;y) =
\Ti_\nu(x'\,;y)$.  Since the $\Ti_\nu(x'\,;y)$ for $\nu$ a $k$-strict
partition form a $\Z$-basis for the ring $\Gamma^{(k)}(x'\,; y)$ of
theta polynomials in $x'$ and $y$, the desired result follows
immediately.
\end{proof}


The Pieri rule (\ref{finaleq}) illustrates that the order relation on
$k$-strict partitions given by the inclusion of Young diagrams is not
compatible with the Bruhat order on $B_\infty$. However, Proposition
\ref{skewprop} shows that the {\em weak} Bruhat order on the
$k$-Grassmannian elements of $B_\infty$ respects the inclusion 
of $k$-strict diagrams.

\begin{defn}
Let $\la$ and $\mu$ be $k$-strict partitions in $\cP(k,n)$ with
$\mu\subset\la$.  We say that $(\la,\mu)$ is a {\em compatible pair}
if there is a reduced word for $w_\la$ whose last $|\mu|$ entries form
a reduced word for $w_\mu$; equivalently, if we have $\ell(w_\la
w_\mu^{-1}) = |\la - \mu|$. In other words, $(\la,\mu)$ is a
compatible pair if $w_\la$ exceeds $w_\mu$ in the weak Bruhat order.
\end{defn}

\begin{cor}
\label{compcor}
Let $(\la,\mu)$ be a compatible pair. 
Then there is a 1-1 correspondence between reduced factorizations
of $w_\la w_\mu^{-1}$ and $k$-strict partitions $\nu$ with 
$\mu\subset\nu\subset\la$ such that $(\la,\nu)$ and $(\nu,\mu)$ are
compatible pairs.
\end{cor}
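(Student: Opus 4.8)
The plan is to build the correspondence by splicing and un-splicing reduced words. Fix $n$ with $\la,\mu\in\cP(k,n)$; then $w_\la,w_\mu\in B_n$, and since a reduced word for an element of $B_n$ uses only the generators $s_0,\dots,s_{n-1}$, every reduced factorization of $w_\la$ in $B_\infty$ actually takes place in $B_n$. Recall that $(\la,\mu)$ being a compatible pair means precisely that $w_\la=(w_\la w_\mu^{-1})\,w_\mu$ is a reduced factorization, i.e.\ that $w_\mu$ lies below $w_\la$ in the weak Bruhat order.

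Given a reduced factorization $w_\la w_\mu^{-1}=uv$, the first step is to produce an intermediate element. Setting $g=vw_\mu$, the chain $\ell(w_\la)\le\ell(u)+\ell(g)\le\ell(u)+\ell(v)+\ell(w_\mu)=\ell(w_\la w_\mu^{-1})+\ell(w_\mu)=\ell(w_\la)$ forces both $w_\la=u\,g$ and $g=v\,w_\mu$ to be reduced factorizations. By the observation following the definition of a skew element, the right factor $g$ of the reduced factorization $w_\la=u\,g$ is $k$-Grassmannian, hence $g=w_\nu$ for a unique $k$-strict partition $\nu\in\cP(k,n)$ (using the parametrization $\nu\mapsto w_\nu$ of \S\ref{wsec}). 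Now $v$ is skew with $w_\nu=v\,w_\mu$ reduced, and $u$ is skew with $w_\la=u\,w_\nu$ reduced, so two applications of Proposition \ref{skewprop} give $\mu\subset\nu\subset\la$; moreover these two reduced factorizations say exactly that $(\nu,\mu)$ and $(\la,\nu)$ are compatible pairs. This defines the map $uv\mapsto\nu$.

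Conversely, given $\nu$ with $\mu\subset\nu\subset\la$ and both $(\la,\nu)$ and $(\nu,\mu)$ compatible, set $u=w_\la w_\nu^{-1}$ and $v=w_\nu w_\mu^{-1}$, so that $w_\la=u\,w_\nu$ and $w_\nu=v\,w_\mu$ are reduced. Then $uv=w_\la w_\mu^{-1}$, and the chain $\ell(w_\la)\le\ell(uv)+\ell(w_\mu)\le\ell(u)+\ell(v)+\ell(w_\mu)=\ell(u)+\ell(w_\nu)=\ell(w_\la)$ forces $\ell(uv)=\ell(u)+\ell(v)$, so $w_\la w_\mu^{-1}=uv$ is a reduced factorization. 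These two constructions are mutually inverse: starting from $uv$ one obtains $\nu$ with $w_\nu=vw_\mu$, and the reverse map then returns $w_\la w_\nu^{-1}=w_\la w_\mu^{-1}v^{-1}=u$ together with $w_\nu w_\mu^{-1}=v$; starting from $\nu$ one obtains $u,v$, and the forward map returns the partition $\nu'$ with $w_{\nu'}=vw_\mu=w_\nu$, whence $\nu'=\nu$ by injectivity of $\nu\mapsto w_\nu$.

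I expect the only delicate point to be the identification $g=vw_\mu=w_\nu$, which relies on the two facts from \S\ref{wsec}: that the right factor of a reduced factorization of the $k$-Grassmannian element $w_\la$ is again $k$-Grassmannian, and that the $k$-Grassmannian elements of $B_n$ are parametrized by $\cP(k,n)$. Everything else amounts to routine length bookkeeping together with the two invocations of Proposition \ref{skewprop}.
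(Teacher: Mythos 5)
Your proposal is correct and follows essentially the same route as the paper's proof: factor $w_\la w_\mu^{-1}=uv$, pass to $vw_\mu$, invoke the observation that the right factor of a reduced factorization of a $k$-Grassmannian element is again $k$-Grassmannian to identify it as some $w_\nu$, and then apply Proposition~\ref{skewprop} twice. You simply make explicit the length bookkeeping and the mutual-inverse verification that the paper dismisses as clear or obvious.
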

\begin{proof}
Suppose that $w_\la = ww_\mu$ and let $w=w'w''$ be a reduced 
factorization. Then $w''w_\mu$ is $k$-Grassmannian and hence 
equal to $w_\nu$ for a unique $k$-strict partition $\nu$.
Proposition \ref{skewprop} implies that $\mu\subset\nu\subset\la$, 
and clearly the pairs $(\la,\nu)$ and $(\nu,\mu)$ are compatible.
The converse is obvious.
\end{proof}

\begin{thm}
\label{abprop}
Let $\la$ and $\mu$ be $k$-strict partitions in $\cP(k,n)$ with
$\mu\subset\la$.  Then the following conditions are equivalent: {\em
(a)} $F^{(k)}_{\la/\mu}(x)\neq 0$; {\em (b)} $(\la,\mu)$ is a
compatible pair; {\em (c)} there exists a standard $k$-tableau on
$\la/\mu$. If any of these conditions holds, then
$F^{(k)}_{\la/\mu}(x) = F_{w_\la w_\mu^{-1}}(x)$.
%
%
\end{thm}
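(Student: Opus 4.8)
The plan is to establish the cycle of implications (a) $\Rightarrow$ (b) $\Rightarrow$ (c) $\Rightarrow$ (a), and then read off the final formula $F^{(k)}_{\la/\mu}(x) = F_{w_\la w_\mu^{-1}}(x)$ from what has been proved. The starting point is equation (\ref{3eq}): expanding $\Ti_\la(x,x'\,;y)$ in the $\Z$-basis $\{\Ti_\nu(x'\,;y)\}$ of $\Gamma^{(k)}(x'\,;y)$ shows that $F^{(k)}_{\la/\mu}(x)$ is exactly the sum of $F_u(x)$ over reduced factorizations $w_\la = u\,w_\mu$. Since each type C Stanley symmetric function $F_u(x)$ is a nonzero nonnegative combination of Schur $Q$-functions (this is standard, see \cite{BH, FK2, L}, and is what makes the argument work), the sum $F^{(k)}_{\la/\mu}(x)$ is nonzero if and only if there exists at least one reduced factorization $w_\la = u\,w_\mu$; but the existence of such a factorization is precisely the statement that $(\la,\mu)$ is a compatible pair. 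This gives (a) $\Leftrightarrow$ (b) directly, and simultaneously, when the condition holds, identifies $F^{(k)}_{\la/\mu}(x)$ with $F_{w_\la w_\mu^{-1}}(x)$ (there is a unique $u$, namely $u = w_\la w_\mu^{-1}$, since $w_\mu$ is a group element). So the real content is the equivalence with (c).

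For (c) $\Rightarrow$ (a): a standard $k$-tableau on $\la/\mu$ is a chain of $k$-strict partitions $\mu = \la^0 \subset \la^1 \subset \cdots \subset \la^r = \la$ in which each step is a $k$-horizontal strip consisting of a single box. By the reduction/tableau machinery of \S\ref{tableaux}, iterating the Pieri-type relation $\mu\rsa\la$ one box at a time, such a chain contributes a nonzero monomial (with positive coefficient $2^{n(T)}$) to $F^{(k)}_{\la/\mu}(x)$; since all coefficients in Definition \ref{Fdefn} are nonnegative, the presence of any single $k$-tableau forces $F^{(k)}_{\la/\mu}(x)\neq 0$. The converse (a) $\Rightarrow$ (c) is the combinatorial heart: if $F^{(k)}_{\la/\mu}(x)\neq 0$ then by Definition \ref{Fdefn} there is at least one $k$-tableau $T$ of shape $\la/\mu$ (with arbitrary entries), and one must produce a \emph{standard} one. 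The standard approach is the usual ``standardization'': given a $k$-tableau realizing the chain $\mu = \la^0\subset\cdots\subset\la^r = \la$, refine each $k$-horizontal strip $\la^i/\la^{i-1}$ into a sequence of single-box $k$-horizontal strips. The point requiring care is that a single $k$-horizontal strip can always be so refined — i.e. that the relation $\mu\rsa\la$ is ``box-decomposable'': if $\la/\mu$ is a $k$-horizontal strip with $|\la-\mu|\geq 1$, there is an intermediate $k$-strict partition $\nu$ with $\mu\rsa\nu\rsa\la$ and $|\nu| = |\mu|+1$. Equivalently, via Proposition \ref{skewprop} and Corollary \ref{compcor}, one needs that any reduced word for $w_\la w_\mu^{-1}$ can be read off so that its prefixes are themselves of the form $w_\nu w_\mu^{-1}$ — but in the weak Bruhat order on $B_\infty$ every reduced word for a group element $w$ automatically has this property (each prefix is a genuine element, and each such element lying above $w_\mu$ in weak order is $k$-Grassmannian hence equals some $w_\nu$ by the remark following the definition of skew elements). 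This is the cleanest route: phrase (a) $\Rightarrow$ (c) entirely in the group, using that a reduced word for $w_\la w_\mu^{-1}$ exists (by (b), which we already have), and that reading it one letter at a time produces a chain of $k$-Grassmannian elements whose associated partitions give a standard $k$-tableau.

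I expect the main obstacle to be the translation between the two languages — making precise that a reduced word for $w_\la w_\mu^{-1}$, read left to right against $w_\mu$, yields at each stage a $k$-Grassmannian element, and that the resulting chain of $k$-strict partitions is an honest standard $k$-tableau in the sense of Definition 16(a) (in particular that consecutive partitions differ by a single-box $k$-horizontal strip, with the correct count $n(\la^i/\la^{i-1})$). Once Corollary \ref{compcor} is in hand this is essentially bookkeeping: a length-one reduced factorization of $w_\la w_\mu^{-1}$ corresponds to exactly one intermediate $\nu$ with $|\nu|=|\mu|+1$, and $n(\nu/\mu) = 0$ for such a one-box step precisely when... — in fact $n$ can be nonzero even for a single box, but the standardness condition only requires the entries $1,\dots,|\la-\mu|$ each to appear once, not that each $n$-value vanish, so there is no obstruction. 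Assembling: (b) gives a reduced word; Corollary \ref{compcor} applied repeatedly turns it into a maximal chain $\mu\rsa\la^1\rsa\cdots\rsa\la$; labelling box $\la^i\ssm\la^{i-1}$ with $i$ gives the desired standard $k$-tableau, proving (b) $\Rightarrow$ (c), which closes the cycle. The final identity $F^{(k)}_{\la/\mu}(x) = F_{w_\la w_\mu^{-1}}(x)$ then follows, as noted, from the uniqueness of the left factor in $w_\la = u\,w_\mu$ together with (\ref{3eq}).
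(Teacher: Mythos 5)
Your route is essentially the paper's: equation~(\ref{3eq}) read against the basis $\{\Ti_\nu(x'\,;y)\}$ gives $F^{(k)}_{\la/\mu}=F_{w_\la w_\mu^{-1}}$ for compatible pairs and zero otherwise, hence (a)$\,\Leftrightarrow\,$(b) (you are right to make explicit the use of $F_u\neq 0$, which the paper uses implicitly); then (b)$\,\Leftrightarrow\,$(c) is translated into reduced words via Corollary~\ref{compcor}.

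The issue is that you correctly flag the crux --- ``that consecutive partitions differ by a single-box $k$-horizontal strip'' --- and then do not close it. After noting that each one-step prefix of a reduced word for $w_\la w_\mu^{-1}$ yields a $k$-Grassmannian element $w_{\la^i}$ with $\la^{i-1}\subset\la^i$ and $|\la^i|=|\la^{i-1}|+1$, you need to show that $\la^i/\la^{i-1}$ is in fact a $k$-horizontal strip, i.e.\ that $\la^{i-1}\rsa\la^i$ in the sense of Definition~16(a); a single-box containment of $k$-strict partitions is \emph{not} automatically a $k$-horizontal strip, and compatibility of the pair is a statement about Coxeter lengths, not about the combinatorics of $\rsa$. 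Your paragraph wanders into whether $n(\la^i/\la^{i-1})$ can be nonzero for a one-box step, which is a red herring: the standardness condition places no constraint on $n$, but it does require each step to be a $k$-horizontal strip. The paper closes exactly this gap by applying the already-proved equality $F^{(k)}_{\la^i/\la^{i-1}}(x)=F_{s_j}(x)$ in a \emph{single} variable $x$, computing $F_{s_j}(x)=2x\neq 0$, and invoking Example~\ref{4ex}(d), which says that in one variable $F^{(k)}_{\nu/\mu}(x)$ vanishes unless $\nu/\mu$ is a $k$-horizontal strip. Adding that short argument (two lines) repairs the proposal and makes it match the paper.
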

\begin{proof}
Equation (\ref{3eq}) may be rewritten in the form
\begin{equation}
\label{2eq}
\sum_{\mu\subset\la} F^{(k)}_{\la/\mu}(x)\, \Ti_\mu(x'\,;y) =
\sum_{\mu} F_{w_\la w_\mu^{-1}}(x)\,\Ti_\mu(x'\,;y)
\end{equation}
where the second sum is over all $\mu\subset\la$ such that $(\la,\mu)$
is a compatible pair. It follows that
\[
F^{(k)}_{\la/\mu}(x) =
\begin{cases}
F_{w_\la w_\mu^{-1}}(x) & \text{if $(\la,\mu)$ is a compatible pair}, \\
0 & \text{otherwise}
\end{cases}
\]
and hence that (a) and (b) are equivalent. Suppose now
that $(\la,\mu)$ is a compatible pair with $|\la|=|\mu|+1$, so that
$w_\la = s_iw_\mu$ for some $i\geq 0$. Observe that if $x$ is a single
variable, then $F_{s_i}(x) = 2x$, and therefore $F^{(k)}_{\la/\mu}(x)=
2x\neq 0$. We deduce from Example \ref{4ex}(d) that $\la/\mu$ must be
a $k$-horizontal strip.  Using Corollary \ref{compcor}, it follows
that there is a 1-1 correspondence between reduced words for $w_\la
w_\mu^{-1}$ and sequences of $k$-strict partitions
\[
\mu = \la^0\subset\la^1\subset\cdots\subset\la^r=\la
\]
such that $|\la^i|=|\la^{i-1}|+1$ and $\la^i/\la^{i-1}$ is a 
$k$-horizontal strip for $1\leq i\leq r=|\la-\mu|$. The latter objects 
are exactly the standard $k$-tableaux on $\la/\mu$. This shows that 
(b) implies (c), and the converse is also clear.
\end{proof}

The previous results show that the non-zero terms in equations
(\ref{mastereq}) and (\ref{master3}) correspond exactly to the terms
in equations (\ref{besteq}) and (\ref{Fxx}), respectively, when 
$w=w_\la$ is a $k$-Grassmannian element of $B_\infty$.

\begin{cor}
\label{stdcor}
Let $w\in B_\infty$ be a skew element and $(\la,\mu)$ be a compatible
pair such that $w_\la = w w_\mu$.  Then the number of reduced words
for $w$ is equal to the number of standard $k$-tableaux on $\la/\mu$
and to the coefficient of $x_1x_2\cdots x_r$ in
$2^{-r}F^{(k)}_{\la/\mu}(x)$, where $r=|\la - \mu|=\ell(w)$.
\end{cor}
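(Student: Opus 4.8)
The plan is to read the result off from Theorem~\ref{abprop} together with the nilCoxeter description~(\ref{defF}) of the type~C Stanley symmetric functions. First I would set $w=w_\la w_\mu^{-1}$; since $(\la,\mu)$ is a compatible pair, we have $\ell(w)=|\la-\mu|=r$, and condition~(b) of Theorem~\ref{abprop} is satisfied, so that $F^{(k)}_{\la/\mu}(x)=F_w(x)$. Moreover the proof of that theorem, via Corollary~\ref{compcor}, already furnishes a bijection between the reduced words for $w$ and the standard $k$-tableaux on $\la/\mu$. Thus the only remaining point is to verify that the number of reduced words for $w$ equals $2^{-r}$ times the coefficient of the squarefree monomial $x_1x_2\cdots x_r$ in $F_w(x)$.

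For this I would compute directly with~(\ref{defF}). Fix $n$ with $w\in B_n$. To extract the coefficient of $x_1x_2\cdots x_r$ from the product $C(x)=C(x_1)C(x_2)\cdots$, one must take from each of $C(x_1),\dots,C(x_r)$ the part homogeneous of degree one in its variable, and the constant term $1$ from every remaining factor. The key elementary observation is that the degree-one part of $C(\omega)$ equals $2\,\omega\,(u_0+u_1+\cdots+u_{n-1})$: each $u_j$ with $j\geq 1$ occurs in exactly two of the linear factors making up $C(\omega)$, whereas $u_0$ occurs in a single factor but with coefficient~$2$, so selecting any one generator always costs a factor of exactly $2$. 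Consequently the coefficient of $x_1\cdots x_r$ in $C(x)$ is $2^{r}$ times the sum of all products $u_{a_1}u_{a_2}\cdots u_{a_r}$ over sequences $(a_1,\dots,a_r)$ with $0\leq a_i\leq n-1$. Such a product vanishes unless $s_{a_1}\cdots s_{a_r}$ is reduced, in which case it equals $u_{s_{a_1}\cdots s_{a_r}}$; and since $\ell(w)=r$, a sequence contributes to $u_w$ precisely when it is a reduced word for $w$. Comparing with~(\ref{defF}) then shows that the coefficient of $x_1\cdots x_r$ in $F_w(x)$ equals $2^{r}$ times the number of reduced words for $w$.

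Combining the two observations, and using $F_w(x)=F^{(k)}_{\la/\mu}(x)$ together with $r=|\la-\mu|=\ell(w)$, gives the asserted triple equality. The one place demanding care --- the ``main obstacle,'' modest as it is --- is the bookkeeping in the second paragraph: one must make sure the squarefree monomial $x_1\cdots x_r$ really forces exactly one generator to be chosen from each of the first $r$ copies of $C$, and that the two ways of choosing a generator (a $u_j$ with $j\geq 1$, which appears twice, versus $u_0$, which appears once but with coefficient $2$) each contribute the identical factor $2$. This uniformity of the factor is exactly what makes the normalization $2^{-r}$ come out right; everything else is a direct appeal to results already in hand.
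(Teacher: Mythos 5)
Your argument is correct, and it fills in a proof that the paper itself omits (the corollary is stated as an immediate consequence of Theorem~\ref{abprop}). The first equality, reduced words $\leftrightarrow$ standard $k$-tableaux, is indeed exactly what the proof of Theorem~\ref{abprop} establishes via Corollary~\ref{compcor}, and your reading of that is accurate. For the second equality, your direct extraction of the squarefree coefficient from the nilCoxeter product $C(x_1)C(x_2)\cdots$ is correct: the linear-in-$\omega$ part of $C(\omega)$ is $2\omega(u_0+u_1+\cdots+u_{n-1})$, squarefreeness forces degree one from each of the first $r$ factors, and in the nilCoxeter algebra the product $(u_0+\cdots+u_{n-1})^r$ picks out, in the coefficient of $u_w$ with $\ell(w)=r$, precisely the reduced words for $w$.

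Two remarks for comparison. The paper's own toolkit (see the proof of Proposition~\ref{uni}) would obtain the same thing one level up, by combining the one-variable evaluation $F_{s_i}(x)=2x$ with the coproduct identity~(\ref{Fxm}); that is just a repackaging of your nilCoxeter computation, so the two routes are essentially equivalent. Alternatively, one can bypass $F_w$ altogether: since a single-box $k$-horizontal strip has $n=1$ (this is what the paper extracts from $F_{s_i}(x)=2x$ together with Example~\ref{4ex}(d) inside the proof of Theorem~\ref{abprop}), every standard $k$-tableau $T$ on $\la/\mu$ has $n(T)=r$, so the coefficient of $x_1\cdots x_r$ in $\sum_T 2^{n(T)}x^T$ is manifestly $2^r$ times the number of standard $k$-tableaux, and the first equality does the rest. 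Either way your proof is complete and sound.
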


\begin{example}
Let $\la$ be a $k$-strict partition, $\la^1$ be defined as in \S
\ref{wsec}, $\la^2=\la\ssm\la^1$, and $\mu\subset\la^2$. We can form a
standard $k$-tableau on $\la/\mu$ by filling the boxes of $\la^2/\mu$,
going down the columns from left to right, and then filling the boxes
of $\la^1$, going across the rows from top to bottom. If $\mu\subset\la$
but $\mu_1 > k$, this procedure does not always work, see for instance
Example \ref{4ex}(c). When $k=3$, $\la = (8,6,5,2)$, and
$\mu=\emptyset$, the $3$-tableau on $\la$ which results is
\[
\begin{array}{cccccccc}
1 & 5  & 9  & 12 & 13 & 14 & 15 & 16 \\
2 & 6  & 10 & 17 & 18 & 19 & & \\ 
3 & 7  & 11 & 20 & 21 &  & & \\
4 & 8  &  & &  &  &  & 
\end{array}
\]
which corresponds to the reduced word
\[
s_1\,s_0\,s_2\,s_1\,s_0\,s_4\,s_3\,s_2\,s_1\,s_0\,
s_3\,s_2\,s_1\,s_5\,s_4\,s_3\,s_2\,s_6\,s_5\,s_4\,s_3
\]
for the Grassmannian element $w_\la=167\ov{5}\ov{3}\ov{2}4\in B_7$.
\end{example}

A sequence $(i_1,\ldots,i_m)$ is called {\em unimodal} if for some $r$
with $0\lequ r \lequ m$, we have $i_1 > i_2 > \cdots > i_r < i_{r+1} <
\cdots < i_m$. An element $w\in B_\infty$ is unimodal if it has a
reduced word $s_{i_1}\cdots s_{i_m}$ such that $(i_1,\ldots,i_m)$ is a
unimodal sequence. A tableau $T$ has {\em content} given by the
composition $\al$ if $\al_i$ of the entries of $T$ are equal to $i$,
for each $i\geq 1$. We can now state the following generalization of
Corollary \ref{stdcor}.

\begin{prop}
\label{uni}
Let $w\in B_\infty$ be a skew element and $(\la,\mu)$ be a compatible
pair such that $w_\la = w w_\mu$. Then there is a 1-1 correspondence
between reduced factorizations $u_1\cdots u_r$ of $w$ into unimodal
elements $u_i$ and $k$-tableaux $T$ of shape $\la/\mu$ with $r$
distinct entries. The lengths of the $u_i$ agree with the content of
$T$, and the number of reduced words for $w$ obtained by concatenating
unimodal reduced words for $u_1,\ldots, u_r$ of the corresponding
lengths is equal to $2^{n(T)-r}$.
\end{prop}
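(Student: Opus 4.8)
The strategy is to recognize $F^{(k)}_{\la/\mu}(x)$ as the type C Stanley symmetric function $F_w(x)$ and then to extract the two combinatorial descriptions of this power series that the statement refers to: its tableau description from Definition \ref{Fdefn}, and the nilCoxeter expansion of \S\ref{ssfns} coming from $C(x)=C(x_1)C(x_2)\cdots$. Since $(\la,\mu)$ is a compatible pair with $w_\la=ww_\mu$, Theorem \ref{abprop} gives $\ell(w)=|\la-\mu|$ and $F^{(k)}_{\la/\mu}(x)=F_w(x)$, so it suffices to match the $k$-tableaux of shape $\la/\mu$ with the reduced factorizations of $w$ into unimodal elements and to track the exponents of $2$ on both sides.

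The technical heart is a one-variable \emph{unimodality criterion}: for any $k$-strict partitions $\mu'\subset\la'$, the diagram $\la'/\mu'$ is a $k$-horizontal strip (that is, $\mu'\rsa\la'$) if and only if $(\la',\mu')$ is a compatible pair and $w_{\la'}w_{\mu'}^{-1}$ is unimodal; and in that case
\[
F^{(k)}_{\la'/\mu'}(x_1)=2^{n(\la'/\mu')}\,x_1^{|\la'-\mu'|}=F_{w_{\la'}w_{\mu'}^{-1}}(x_1),
\]
with the number of unimodal reduced words for $w_{\la'}w_{\mu'}^{-1}$ equal to $2^{n(\la'/\mu')-1}$ when $\la'\ne\mu'$. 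I would prove this by combining three ingredients. First, Example \ref{4ex}(d): in one variable only the $k$-tableau with all entries $1$ survives, and it exists exactly when $\la'/\mu'$ is a $k$-horizontal strip, giving the left-hand equality. Second, setting all but one variable to zero collapses $C(x)$ to $C(x_1)$, so the one-variable $F_v(x_1)$ vanishes unless $v$ is unimodal, in which case it equals $c(v)\,x_1^{\ell(v)}$, where $c(v)$ is the number of subwords of $(n-1,n-2,\dots,1,0,1,\dots,n-1)$ with product $v$, a subword through the central $0$ being counted with weight $2$; a direct check shows that each unimodal reduced word for $v\ne e$ contributes exactly $2$ to $c(v)$ (its minimal letter occupies one of its two positions in this word, or, being $0$, the unique central position with weight $2$), so $c(v)=2\cdot(\text{number of unimodal reduced words of }v)$. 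Third, Theorem \ref{abprop} identifies $F^{(k)}_{\la'/\mu'}$ with $F_{w_{\la'}w_{\mu'}^{-1}}$ when $(\la',\mu')$ is compatible and makes it zero otherwise. Combining these, nonvanishing of the one-variable specialization simultaneously detects $k$-horizontal strips and unimodality and forces $c(w_{\la'}w_{\mu'}^{-1})=2^{n(\la'/\mu')}$; the count of unimodal reduced words then follows, using that $n(\la'/\mu')\ge1$ for $\la'\ne\mu'$ so that $c/2$ is a positive integer.

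With the criterion in hand, I would build the correspondence by iterating Corollary \ref{compcor}: its repeated application identifies reduced factorizations $w=u_1\cdots u_r$ into nontrivial pieces with strictly increasing chains $\mu=\nu^0\subsetneq\nu^1\subsetneq\cdots\subsetneq\nu^r=\la$ of $k$-strict partitions in which every consecutive pair is compatible, with $u_j$ corresponding to one step of the chain (the backward direction again using that a right factor of a reduced factorization of a $k$-Grassmannian element is $k$-Grassmannian, together with Proposition \ref{skewprop}). By the unimodality criterion, requiring every $u_j$ to be unimodal is the same as requiring every step of the chain to be a $k$-horizontal strip, i.e.\ the same as the chain being a $k$-tableau $T$ of shape $\la/\mu$ using exactly the entries $1,\dots,r$; and under this bijection $\ell(u_j)$ is the number of boxes of $T$ with a fixed value, so the lengths of the $u_j$ are the parts of the content of $T$. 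Finally the count in the last sentence is multiplicative over the steps: the number of ways to choose and concatenate unimodal reduced words for $u_1,\dots,u_r$ is
\[
\prod_{j=1}^{r}2^{n(\nu^j/\nu^{j-1})-1}=2^{\sum_j n(\nu^j/\nu^{j-1})-r}=2^{n(T)-r},
\]
by the definition $n(T)=\sum_j n(\nu^j/\nu^{j-1})$.

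The step I expect to be the main obstacle is the unimodality criterion, and within it the bookkeeping of powers of $2$. The coefficient $c(v)$ read off from $C(x_1)$ carries an extra factor $2$ relative to the number of unimodal reduced words of $v$ — from the $(1+2\omega u_0)$ factor, and equivalently from the doubled appearance of each positive letter in $(n-1,\dots,1,0,1,\dots,n-1)$ — and it is exactly this one factor $2$ per unimodal piece that turns the exponent $n(T)$, which is intrinsic to the theta-polynomial and tableau formulas, into the exponent $n(T)-r$ of the statement. Making this precise leans on the vanishing of $F^{(k)}_{\la'/\mu'}$ for non-compatible pairs (Theorem \ref{abprop}) and on $n(\la'/\mu')\ge1$ for nontrivial $k$-horizontal strips; once these are available, the rest is the iteration of Corollary \ref{compcor} and a routine matching of the order conventions between factorizations and fillings.
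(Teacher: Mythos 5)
Your proposal is correct and follows essentially the same route as the paper's proof, which rests on three ingredients: the one-variable formula $F_w(x_1)=2n_w x_1^{\ell(w)}$ with $n_w$ the number of unimodal reduced words (which you justify in detail via the subword count in $C(x_1)$), the product formula (\ref{Fxm}), and the iteration of Corollary \ref{compcor} against Definition \ref{Fdefn}. You have just spelled out the factor-of-two bookkeeping in $C(x_1)$ and the one-variable dictionary between $k$-horizontal strips and unimodal factors more explicitly than the paper does.
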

\begin{proof}
From the definition of $F_w$ in \S \ref{ssfns} it follows that if
there is only one variable $x$ and $w\neq 1$, then we have
$F_w(x)=2\,n_w\,x^{\ell(w)}$, where $n_w$ denotes the number of
unimodal reduced words for $w$. Moreover, for each $m\geq 1$ we have
\begin{equation}
\label{Fxm}
F_w(x_1,\ldots,x_m) = \sum_{u_1\cdots u_m=w}F_{u_1}(x_1)\cdots F_{u_m}(x_m)
\end{equation}
summed over all reduced factorizations $u_1\cdots u_m$ for $w$. The
result follows by comparing (\ref{Fxm}) with Definition \ref{Fdefn}
and using Example \ref{4ex}(d) and Corollary \ref{compcor}.
\end{proof}

\noin 
Let $\la\in \cP(k,n)$ and $w_\la\in B_n$ be the corresponding
$k$-Grassmannian element. There are analogues of Corollary
\ref{stdcor} and Proposition \ref{uni} for $\Ti_\la(x\,;y)$ and the
$k$-bitableaux of shape $\la$. We say that a permutation $v\in S_n$ is
{\em decreasing} if $v$ has a reduced word $s_{i_1}\cdots s_{i_m}$
such that $i_1> \cdots >i_m$. Then the $k$-bitableaux of shape $\la$
correspond to reduced factorizations $u_1\cdots u_rv_1\cdots v_s$ of
$w_\la$ with the $u_i\in B_n$ unimodal and the $v_j\in S_n$
decreasing. We leave the details to the reader.

According to \cite{BH} and \cite{L}, the type C Stanley symmetric
function $F_w$ is a nonnegative integer linear combination of Schur
$Q$-functions.  Together with Theorem
\ref{abprop}, this implies the following result.

\begin{cor}
\label{bhcor}
For any two $k$-strict partitions $\la$, $\mu$ with $\mu\subset\la$, 
the function $F^{(k)}_{\la/\mu}(x)$ is a nonnegative integer linear
combination of Schur $Q$-functions.
\end{cor}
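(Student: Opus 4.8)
The plan is to deduce this immediately from Theorem \ref{abprop} together with the known positivity of type C Stanley symmetric functions. First I would fix an integer $n$ large enough that both $\la$ and $\mu$ lie in $\cP(k,n)$, so that the $k$-Grassmannian elements $w_\la, w_\mu \in B_n$ of \S \ref{wsec} are defined. By Theorem \ref{abprop} there are two cases. If $(\la,\mu)$ is not a compatible pair, then $F^{(k)}_{\la/\mu}(x) = 0$, which is vacuously a nonnegative integer combination of Schur $Q$-functions. If $(\la,\mu)$ is a compatible pair, then $w := w_\la w_\mu^{-1}$ is a (skew) element of $B_\infty$ and $F^{(k)}_{\la/\mu}(x) = F_w(x)$.

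In the latter case I would simply invoke the theorem of Billey--Haiman \cite{BH} and Lam \cite{L}, according to which $F_w$ is a nonnegative integer linear combination of Schur $Q$-functions for every $w \in B_\infty$; this finishes the argument. The one routine point to verify along the way is that the conclusion does not depend on the auxiliary choice of $n$: both the element $w_\la w_\mu^{-1}$ and the function $F_w$ are stable under the inclusions $B_n \hookrightarrow B_{n+1}$, the former because this inclusion is a group homomorphism compatible with the recipe defining $w_\la$, and the latter by the stability of the $F_w$ noted in \S \ref{ssfns}. One should also note that the passage from Definition \ref{Fdefn} (which makes sense for all $n$) to the statement of Theorem \ref{abprop} (phrased for $\la,\mu \in \cP(k,n)$) is harmless precisely because $F^{(k)}_{\la/\mu}(x)$ as defined by a tableau sum is manifestly independent of $n$.

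The substantive work is thus not in this corollary but upstream, in Theorem \ref{abprop}, which identifies $F^{(k)}_{\la/\mu}$ with $F_{w_\la w_\mu^{-1}}$ exactly when it is nonzero; granting that identification and the cited Billey--Haiman/Lam positivity result, there is no real obstacle, and the proof is a short case split followed by a citation.
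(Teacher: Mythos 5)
Your proof is correct and matches the paper's argument exactly: Theorem \ref{abprop} identifies $F^{(k)}_{\la/\mu}$ with $F_{w_\la w_\mu^{-1}}$ (or shows it vanishes), and the Billey--Haiman/Lam positivity of type C Stanley symmetric functions then gives the claim. The remarks on stability in $n$ are sound but routine, and the paper leaves them implicit.
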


\subsection{}
We say that a permutation $\om$ is {\em fully commutative} if any
reduced word for $\om$ can be obtained from any other by a sequence of
braid relations that only involve commuting generators.  It follows
from \cite[Thm.\ 4.2]{Ste} that a permutation $\om$ is fully
commutative if and only if there exists a Grassmannian permutation
$\om_\la$ and a reduced factorization $\om_\la = \om\om'$ for some
permutation $\om'$. In other words, the skew elements of the symmetric
group are exactly the fully commutative elements.

Following \cite{FS}, the expansion of the formal product
\[
A_1(x_1)A_1(x_2)\cdots = \sum_{\om\in S_n}G_\om(x)\, u_\om
\]
may be used to define the type A Stanley symmetric functions
$G_\om(x)$ for $\om\in S_n$. Stanley \cite{Sta} introduced $G_\om$ to
study the set of reduced words for $\om$ (he actually worked with
$G_{\om^{-1}}$). It is shown in \cite[\S
2]{BJS} that if $\om\in S_n$ is fully commutative (or equivalently,
{\em 321-avoiding}) then $G_\om = s_{\la/\mu}$ is a skew Schur
function, and the number of reduced words for $\om$ is equal to the
number of standard tableaux on $\la/\mu$. Our present definition and
study of skew elements in the hyperoctahedral group is therefore
completely analogous to this established theory for the symmetric
group.

Any fully commutative element of $B_n$, in the sense of \cite{Ste}, is
a skew element. However the converse is emphatically false, for
example the $1$-Grassmannian element $w_{(4,1)}= 2\ov{3}1$ is not
fully commutative. The three reduced words 
\[
s_1\,s_2\,s_1\,s_0\,s_1 \qquad
s_2\,s_1\,s_2\,s_0\,s_1 \qquad s_2\,s_1\,s_0\,s_2\,s_1
\]
for $2\ov{3}1$ correspond respectively to the three standard $1$-tableaux
\[
\begin{array}{l} 1234 \\ 5 \end{array} \qquad \ \
\begin{array}{l} 1245 \\ 3 \end{array} \qquad \ \
\begin{array}{l} 1345 \\ 2 \end{array}
\]
on the diagram $\la=(4,1)$.

\end{document}